\newtheorem{theorem}{Theorem}[section]
\newtheorem{lemma}[theorem]{Lemma}
\newtheorem{proposition}[theorem]{Proposition}
\newtheorem{corollary}[theorem]{Corollary}
\theoremstyle{definition}
\newtheorem{definition}[theorem]{Definition}
\newtheorem{remark}[theorem]{Remark}
\newtheorem{ipotesi}[theorem]{Assumption}
\newtheorem{notazioni}[theorem]{Notation}
\newtheorem{assumptions}[theorem]{Assumptions}
\numberwithin{equation}{section}
\numberwithin{subsection}{section}
\newcommand{\N}{\mathbb{N}}
\newcommand{\Z}{\mathbb{Z}}
\newcommand{\R}{\mathbb{R}}
\newcommand{\mres}{\mathbin{\vrule height 1.6ex depth 0pt width
0.13ex\vrule height 0.13ex depth 0pt width 1.3ex}}
\newcommand{\U}{\mathcal{U}}
\newcommand{\A}{\mathcal{A}}
\newcommand{\B}{\mathcal{B}}
\newcommand{\G}{\mathcal{G}}
\newcommand{\D}{\mathcal{D}}
\newcommand{\M}{\mathbf{M}}
\newcommand{\Rc}{\mathscr{R}}
\newcommand{\I}{\mathscr{I}}
\newcommand{\Fl}{\mathbf{F}}
\newcommand{\F}{\mathscr{F}}
\newcommand{\K}{\mathcal{K}}
\newcommand{\Lip}{\mathrm{Lip}}
\newcommand{\spt}{\mathrm{spt}}
\newcommand{\dist}{\mathrm{dist}}
\newcommand{\Ha}{\mathcal{H}}
\newcommand{\bphi}{\boldsymbol{\varphi}}
\newcommand{\bfeta}{\boldsymbol{\eta}}
\newcommand{\p}{\mathbf{p}}
\newcommand{\weakto}{\rightharpoonup}
\title[Multiple valued sections of vector bundles]{Multiple valued sections of vector bundles: the reparametrization theorem for $Q$-valued functions revisited}
\author{Salvatore Stuvard}
\newcommand{\Addresses}{{% additional braces for segregating \footnotesize
  \bigskip
  \footnotesize

  S. S., \textsc{Universit\"at Z\"urich, Winterthurerstrasse 190, CH-8057 Z\"urich, Switzerland}
  \par\nopagebreak
  
\bigskip 
 
  \textit{E-mail address}, S. S.: \href{stuvard@math.utexas.edu}{stuvard@math.utexas.edu}
   
}}
\begin{document}

\begin{abstract}
We analyze a notion of multiple valued sections of a vector bundle over an abstract smooth Riemannian manifold, which was suggested by W. Allard in the unpublished note ``\textit{Some useful techniques for dealing with multiple valued functions}'' and generalizes Almgren's $Q$-valued functions. We study some relevant properties of such $Q$-multisections and apply the theory to provide an elementary and purely geometric proof of a delicate reparametrization theorem for multi-valued graphs which plays an important role in the regularity theory for higher codimension area minimizing currents \`a la Almgren-De Lellis-Spadaro.

\vspace{4pt}
\noindent \textsc{Keywords:} Almgren's $Q$-valued functions, integral currents, integral flat chains, sections of vector bundles, reparametrization.

\vspace{4pt}
\noindent \textsc{AMS subject classification (2010):} 49Q15

\end{abstract}

\maketitle

\section{Introduction}

Introduced by F. Almgren in his groundbreaking monograph \cite{Almgren00}, multiple valued functions are an indispensable tool to address the regularity problem for area minimizing currents in codimension higher than one. In recent years, C. De Lellis and E. Spadaro have brought to a successful conclusion the challenging project to revisit Almgren's regularity theory, taking advantage of the tools from metric analysis and metric geometry developed in the last couple of decades in order to substantially reduce the complexity of the subject and give a new insight of the whole theory itself, cf. \cite{DLS11a, DLS13a, DLS14, DLS13b, DLS13c} and also \cite{DeLellis2015}. Since then, the techniques pioneered by De Lellis and Spadaro have been fruitfully applied to a wide variety of problems, including the regularity of $2$-dimensional integral currents that are (in a suitable sense) almost minimizing the area functional \cite{DLSS1,DLSS2,DLSS3}, the boundary regularity of area minimizing integral currents \cite{DLDPHM}, and the regularity of integer rectifiable currents that minimize the mass in their homology class modulo $p$ \cite{DLHMS_linear, DLHMS}. 

Understanding the connection between multiple valued functions and integer rectifiable currents is crucial to carry on the Almgren-De Lellis-Spadaro program. A basic observation is that one can naturally associate an integer rectifiable current to the graph of a Lipschitz multiple valued function. This can be done by defining a suitable notion of \emph{push-forward} of a Lipschitz manifold through a multiple valued function, see \cite{DLS13a} and \S\, \ref{sec:pf} below. On the other hand, a highly non-trivial procedure allows one to approximate the rescalings of an area minimizing current at an interior branch point of density $Q$ with the graphs of a sequence of $Q$-valued functions which converge, in the limit, to a $Q$-valued function which is $Q$-harmonic, in the sense that it minimizes a conveniently defined Dirichlet energy. This fact is the key to reduce the regularity problem for area minimizing currents to the regularity problem for ${\rm Dir}$-minimizing $Q$-valued functions. 

When performing the above approximation procedure, it is crucial that the limiting ${\rm Dir}$-minimizer ``inherits'' the singularities of the current. In order to guarantee that this happens, it is necessary to suitably construct a regular manifold (the \emph{center manifold}) which is an approximate ``average'' of the sheets of the current itself, and to approximate with high degree of accuracy the current with $Q$-valued functions defined on the center manifold and taking values in its normal bundle. This goal is achieved in \cite{DLS13b}. The key step is to derive a result concerning the possibility to \emph{reparametrize} the graph of a Lipschitz multiple valued function. Specifically, the problem of interest here is the following: let $f \colon \Omega \subset \R^{m} \to \A_{Q}(\R^{n})$ be a Lipschitz $Q$-valued function, and let $\Sigma$ be a regular manifold which is the graph of a sufficiently smooth function $\bphi \colon \Omega' \subset \Omega \to \R^{n}$. If the Lipschitz constant of $f$ is small and $\Sigma$ is sufficiently flat, then is it possible to represent the graph of $f$ also as the image of a Lipschitz multiple valued function $F$ defined on $\Sigma$ and taking values in its normal bundle? Furthermore, which control do we have on the Lipschitz constant of $F$ in terms of the Lipschitz constant of $f$ and of the geometry of $\Sigma$? 

Such a problem has been tackled in \cite{DLS13a}, where the authors apply the theory of currents in metric spaces à la Ambrosio-Kirchheim (see \cite{AK00}) to successfully prove the reparametrization theorem needed in \cite{DLS13b}.

The ultimate goal of this note is to provide a completely elementary and purely geometric proof of such a reparametrization theorem for Lipschitz multiple valued functions, without making use of the Ambrosio-Kirchheim theory (see Theorem \ref{reparametrization:thm}). This is achieved by developing a theory of multiple valued \emph{sections} (\emph{$Q$-multisections}) of an abstract vector bundle $\Pi \colon E \to \Sigma$ over an abstract smooth Riemannian manifold, stemming from some unpublished ideas of W. Allard \cite{Allard} and generalizing the notion of $Q$-valued function. Two properties of \emph{coherence} and \emph{vertical boundedness} for a $Q$-multisection are particularly relevant, as they ``mimic'' the classical Lipschitz continuity in the vector bundle-valued case (see Propositions \ref{continuity:prop} and \ref{Lip:prop}).

The theory of $Q$-multisections seems to be of independent interest, yet to be fully developed and capable of further applications. As in the single-valued case, indeed, it is often of interest to minimize a given functional of the Calculus of Variations among multiple valued functions which are constrained to take values in some vector bundle over a given manifold (see, for instance, our paper \cite{SS17b}, where we develop a multivalued theory for the stability operator). We strongly believe that the theory of $Q$-multisections may provide useful tools to deal with similar situations.

\subsection{Content of the paper.}

This note is organized in three sections and two appendices. In Section \ref{sec:prel} we provide a quick tutorial on multiple valued functions and integer rectifiable currents, recall the relevant results that are used in the rest of the paper and fix terminology and notation; Section \ref{sec:sections} contains the definition of $Q$-multisections, and a detailed analysis of the aforementioned properties of coherence and vertical boundedness; our new approach to $Q$-valued reparametrizations is finally presented in Section \ref{sec:reparam}. The two appendices contain some further results on the theory of push-forwards through multiple valued functions: in particular, in Appendix \ref{sec:commutation} we present a slightly simplified proof (with respect to \cite[Theorem 2.1]{DLS13a}) of the fact that the multi-valued push-forward operator on Lipschitz submanifolds commutes with the boundary operator in the sense of currents; in Appendix \ref{ssec:pf_fc} we extend the notion of multi-valued push-forward to the class of integral flat chains.

\medskip

\noindent\textbf{Acknowledgments.} The author is very grateful to Camillo De Lellis for his invaluable suggestions and constant support, to William Allard for sharing with him some of his beautiful ideas, and to Andrea Marchese for carefully reading a preliminary version of this manuscript and for his very helpful comments. The author is also thankful to the anonymous referees for their valuable suggestions.

The research of S.S. has been supported by the ERC grant agreement RAM (Regularity for Area Minimizing currents), ERC 306247.

\section{Preliminaries} \label{sec:prel}

We recall here the basic facts concerning multiple valued functions and integer rectifiable currents, mainly in order to fix the notation that will be used throughout the paper. Our main reference for multiple valued functions will be \cite{DLS11a}, where De Lellis and Spadaro revisit and simplify Almgren's original theory in \cite{Almgren00}. For a thorough discussion about currents, instead, the reader can refer to standard books in Geometric Measure Theory such as \cite{Sim83} and \cite{KP08}, to the monograph \cite{GMS98} or to the treatise \cite{Federer69}.

\subsection{The metric space of $Q$-points.} Let $Q$ be a fixed positive integer. We will denote by $\A_{Q}(\R^n)$ the set of $Q$-points in $\R^n$, defined by
\[
\A_{Q}(\R^n) = \left\lbrace T = \sum_{l=1}^{Q} \llbracket v_{l} \rrbracket \, \colon \, \mbox{each } v_{l} \in \R^n \right\rbrace,
\]
where $\llbracket v \rrbracket$ denotes the Dirac delta $\delta_{v}$ centered at $v \in \R^n$.

%The set of \emph{$Q$-points} in $\R^n$ is, roughly speaking, the set of \emph{unordered} $Q$-tuples of vectors in $\R^n$. More precisely, we consider the group $\mathcal{P}_{Q}$ of the permutations of $\{1,\dots,Q\}$, and we let $\A_{Q}(\R^n)$ be the quotient $(\R^n)^{Q} / \sim$ modulo the equivalence relation
%\[
%\left( v_{1}, \dots, v_{Q} \right) \sim \left( v_{\sigma(1)}, \dots, v_{\sigma(Q)} \right) \hspace{0.5cm} \forall \, \sigma \in \mathcal{P}_{Q}.
%\]  
%
%It is immediate to see that this set can be identified with the subset of positive measures of mass $Q$ on $\R^n$ which are the sum of integer multiplicity Dirac deltas:

%The identification of $Q$-points with measures plays a fundamental role in the development of calculus on $\A_{Q}(\R^n)$, as it allows one to define a distance between $Q$-points borrowing one of the distances defined for measures with finite mass. In particular, it is customary to use 

The set $\A_{Q}(\R^n)$ will be equipped with the structure of metric space: the distance between two $Q$-points is defined as the Wasserstein distance of exponent two between the associated measures (cf. for instance \cite[Section 7.1]{Vil03}). Specifically, if $T_{1} = \sum_{l=1}^{Q} \llbracket v_{l} \rrbracket$ and $T_{2} = \sum_{l=1}^{Q} \llbracket w_{l} \rrbracket$, then the distance between $T_{1}$ and $T_{2}$ is the quantity
\[
\G(T_{1}, T_{2}) := \left( \min_{\sigma \in \mathcal{P}_{Q}} \sum_{l=1}^{Q} |v_{l} - w_{\sigma(l)}|^{2} \right)^{\frac{1}{2}}\,,
\]
where $\mathcal{P}_{Q}$ is the group of the permutations of $\{1,\dots,Q\}$. One can easily see that $\left( \A_{Q}(\R^n), \G \right)$ is a complete, separable metric space.

If $T \in \A_{Q}(\R^{n})$ can be written as $T = m \llbracket v \rrbracket + \sum_{i=1}^{Q-m} \llbracket v_{i} \rrbracket$ with each $v_{i} \neq v$, then we say that $v$ has \emph{multiplicity} $m$ in $T$. Sometimes, when $v$ has multiplicity $m$ in $T$ we will write $m = \Theta_{T}(v)$, using a notation which is coherent with regarding $T$ as a $0$-dimensional integer rectifiable current in $\R^{n}$ (see \cite[Section 27]{Sim83} and Remark \ref{rmk:deltas} below).

Also, to any point $T = \sum_{l} \llbracket v_{l} \rrbracket \in \A_{Q}(\R^{n})$ one can naturally associate two objects, of which we will make use in the sequel: the \emph{diameter} of $T$ is the scalar
\[
{\rm diam}(T) := \max_{i,j \in \{1,\dots,Q\}} |v_{i} - v_{j}|,
\]
whereas the \emph{center of mass} of $T$ is the vector
\[
\bfeta(T) := \frac{1}{Q} \sum_{l=1}^{Q} v_{l}.
\]

\subsection{$Q$-valued functions.} Let $\Sigma = \Sigma^{m}$ be an $m$-dimensional $C^{1}$ submanifold of $\R^{d}$. In what follows, integrals on $\Sigma$ will always be computed with respect to the $m$-dimensional Hausdorff measure $\Ha^{m}$ defined in the ambient space. A \emph{$Q$-valued function} on $\Sigma$ is any map $u \colon \Sigma \to \A_{Q}(\R^n)$. 
If $B \subset \Sigma$ is a measurable subset, every measurable function $u \colon B \to \A_{Q}(\R^n)$ can be thought as coming together with a measurable selection, i.e. a $Q$-tuple of measurable functions $u_1,\dots,u_Q \colon B \to \R^n$ such that
\begin{equation}
u(p) = \sum_{l=1}^{Q} \llbracket u_{l}(p) \rrbracket \qquad \mbox{ for $\Ha^m$-a.e. } p \in B\,,
\end{equation}
see \cite[Proposition 0.4]{DLS11a}.
%
%as specified in the following proposition.
%\begin{proposition}[Measurable selection, cf. {\cite[Proposition 0.4]{DLS11a}}] \label{meas_select}
%Let $B \subset \Sigma$ be a $\Ha^{m}$-measurable set, and let $u \colon B \to \A_{Q}(\R^{n})$ be a measurable function. Then, there exist measurable functions $u_{1},\dots,u_{Q} \colon B \to \R^{n}$ such that
%
%\end{proposition}

The metric space structures of both $\Sigma$ and $\A_{Q}(\R^n)$ allow to straightforwardly define H\"older and Lipschitz continuous $Q$-valued functions on $\Sigma$. Moreover, a notion of \emph{differentiability} can be introduced for $u \colon \Sigma \to \A_{Q}(\R^n)$ as follows.
\begin{definition}[Differentiable $Q$-valued functions] \label{differentiable}
A map ${u \colon \Sigma \to \A_{Q}(\R^n)}$ is \emph{differentiable} at $p \in \Sigma$ if there exist $Q$ linear maps $\lambda_{l} \colon T_{p}\Sigma \to \R^n$ satisfying:
\begin{itemize}
\item[$(i)$] $\G\left( u(\exp_{p}(\tau)), T_{p}u(\tau) \right) = o(|\tau|)$ as $|\tau| \to 0$ for any $\tau \in T_{p}\Sigma$, where $\exp$ is the exponential map on $\Sigma$ and 
\begin{equation}
T_{p}u(\tau) := \sum_{l=1}^{Q} \llbracket u_{l}(p) + \lambda_{l} \cdot \tau \rrbracket\,;
\end{equation}
\item[$(ii)$] $\lambda_{l} = \lambda_{l'} \mbox{ if } u_{l}(p) = u_{l'}(p)$.
\end{itemize}
\end{definition}
We will use the notation $Du_{l}(p)$ for $\lambda_{l}$, and formally set $Du(p) = \sum_{l} \llbracket Du_{l}(p) \rrbracket$: observe that one can regard $Du(p)$ as an element of $\A_{Q}(\R^{n \times m})$ as soon as a basis of $T_{p}\Sigma$ has been fixed. For any $\tau \in T_{p}\Sigma$, we define the directional derivative of $u$ along $\tau$ to be $D_{\tau}u(p) := \sum_{l} \llbracket Du_{l}(p) \cdot \tau \rrbracket$, and establish the notation $D_{\tau}u = \sum_{l} \llbracket D_{\tau}u_l \rrbracket$.

A version of Rademacher's theorem can be proved in this setting, and thus Lipschitz $Q$-valued functions turn out to be differentiable in the sense of the above definition at $\Ha^{m}$-a.e. $p$ (cf. \cite[Theorem 1.13]{DLS11a}). Furthermore, the result concerning the existence of measurable selections can be improved, as Lipschitz $Q$-valued functions enjoy the following \emph{Lipschitz selection property}.
\begin{proposition}[Lipschitz selection, cf. {\cite[Lemma 1.1]{DLS13a}}] \label{Lip_sel}
Let $B \subset \Sigma$ be measurable, and assume $u \colon B \to \A_{Q}(\R^n)$ is Lipschitz. Then, there are a countable partition of $B$ in measurable subsets $B_{i}$ ($i \in \N$) and Lipschitz functions $u_{i}^{l} \colon B_{i} \to \R^n$ ($l \in \{1,\dots,Q\}$) such that
\begin{itemize}
\item[$(a)$] $u|_{B_i} = \sum_{l=1}^{Q} \llbracket u_{i}^{l} \rrbracket$ for every $i \in \N$, and $\Lip(u_{i}^{l}) \leq \Lip(u)$ for every $i$,$l$;
\item[$(b)$] for every $i \in \N$ and $l, l' \in \{1,\dots,Q\}$, either $u_{i}^{l} \equiv u_{i}^{l'}$ or $u_{i}^{l}(p) \neq u_{i}^{l'}(p) \, \forall \, p \in B_{i}$;
\item[$(c)$] for every $i$ one has $Du(p) = \sum_{l=1}^{Q} \llbracket Du_{i}^{l}(p) \rrbracket$ for a.e. $p \in B_{i}$.
\end{itemize}
\end{proposition}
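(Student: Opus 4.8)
The plan is to argue by induction on $Q$, the base case $Q=1$ being trivial (take $B_{1}=B$ and $u_{1}^{1}=u$). For the inductive step, I would first split $B$ into the two measurable pieces $B^{(0)}:=\{p\in B:{\rm diam}(u(p))=0\}$ and $B\setminus B^{(0)}$; since $p\mapsto{\rm diam}(u(p))$ is $2\Lip(u)$-Lipschitz, $B^{(0)}$ is relatively closed in $B$. On $B^{(0)}$ the function $u$ collapses onto a single sheet of multiplicity $Q$, whence $u|_{B^{(0)}}=\sum_{l=1}^{Q}\llbracket\bfeta\circ u\rrbracket$ with $\Lip(\bfeta\circ u)\le\Lip(u)$; this contributes one cell of the partition on which all selections coincide and (a), (b) and (c) are immediate.

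The substance of the argument is a \emph{local} splitting of $u$ on $B\setminus B^{(0)}$ into factors of strictly smaller multiplicity. Fix $p_{0}\in B\setminus B^{(0)}$ and write $u(p_{0})=\sum_{j}k_{j}\llbracket w_{j}\rrbracket$ with the $w_{j}$ distinct, $k_{j}\ge1$, $\sum_{j}k_{j}=Q$, and at least two distinct $w_{j}$'s. Set $r:=\tfrac{1}{4}\min_{i\ne j}|w_{i}-w_{j}|>0$ and $\rho:=r/(2\Lip(u))$, and let $U_{p_{0}}:=\{p\in\Sigma:|p-p_{0}|<\rho\}$. For $p\in U_{p_{0}}$ one has $\G(u(p),u(p_{0}))\le r/2$, so in any optimal matching between $u(p)$ and $u(p_{0})$ every atom of $u(p)$ lies within distance $r/2$ of exactly one $w_{j}$, with precisely $k_{j}$ of them clustering near each $w_{j}$. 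Declaring $u_{j}(p)\in\A_{k_{j}}(\R^{n})$ to be the collection of those atoms yields a well-defined decomposition $u|_{U_{p_{0}}}=\sum_{j}u_{j}$ whose summands have pairwise separated supports. Moreover, for $p,p'\in U_{p_{0}}$ an optimal matching between $u(p)$ and $u(p')$ cannot pair an atom near $w_{i}$ with one near $w_{j}$ for $i\ne j$, since such a cross-term would contribute at least $(|w_{i}-w_{j}|-r)^{2}\ge 9r^{2}$ to $\G(u(p),u(p'))^{2}$, whereas $\G(u(p),u(p'))^{2}\le\Lip(u)^{2}(2\rho)^{2}=r^{2}$; hence the matching respects the clusters and $\Lip(u_{j}|_{U_{p_{0}}})\le\Lip(u)$.

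Since $\Sigma$ is second countable, I would then cover $B\setminus B^{(0)}$ by countably many balls $U_{p_{k}}$ of the above type, disjointify them into measurable sets $B'_{k}$, apply the inductive hypothesis on each $B'_{k}$ to each factor $u_{j}$ (an $\A_{Q_{j}}(\R^{n})$-valued Lipschitz function with $Q_{j}<Q$), and pass to the common refinement of the finitely many countable partitions so produced. Concatenating the selections of the various factors gives, on each resulting cell, Lipschitz selections of $u$ with Lipschitz constant $\le\Lip(u)$, which is (a); property (b) holds because inside a single factor it is inherited from the inductive hypothesis, while atoms of distinct factors stay separated over the whole cell. Finally (c) follows from Rademacher's theorem for $Q$-valued functions (\cite[Theorem 1.13]{DLS11a}) together with the Lebesgue density theorem: at $\Ha^{m}$-a.e. $p$ in a cell $B_{i}$, both $u$ and all the $u_{i}^{l}$ are differentiable and $p$ is a density point of $B_{i}$, and since on $B_{i}$ one has $u=\sum_{l}\llbracket u_{i}^{l}\rrbracket$ with the sheets either identically equal or everywhere distinct, the two first-order affine approximations of $u$ at $p$ must coincide, forcing $Du(p)=\sum_{l}\llbracket Du_{i}^{l}(p)\rrbracket$.

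The step I expect to be most delicate is the quantitative control in the local splitting: it is precisely the calibrated choice of $r$ and $\rho$ that forces every optimal matching to respect the clustering of the sheets, and only this yields the \emph{sharp} Lipschitz bound $\Lip(u_{j})\le\Lip(u)$ for the factors (rather than a bound that degrades with $Q$). The remaining point requiring care is purely organizational, namely verifying that the countably many local covers, their disjointifications and the common refinements carried through the induction assemble into a single countable measurable partition of $B$ on which (b) and (c) hold simultaneously.
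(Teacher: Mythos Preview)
The paper does not give a proof of this proposition: it is recorded in the preliminaries with a reference to \cite[Lemma~1.1]{DLS13a} and then used as a black box. There is thus no in-paper argument to compare against.

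Your proposal is correct and follows the standard route. The local splitting you set up is precisely a pointwise version of the Lipschitz decomposition stated here as Proposition~\ref{Lip_decomp}: at any $p_{0}$ where the sheets separate you shrink the domain until the separation dominates $\Lip(u)\cdot{\rm diam}$, and your calibration of $r$ and $\rho$ is exactly what forces every optimal matching between $u(p)$ and $u(p')$ to respect the clustering, giving the sharp bound $\Lip(u_{j})\le\Lip(u)$ with no loss in $Q$. The induction, the Lindel\"of cover with disjointification, and the common refinement over the finitely many factors on each piece are all fine.

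Two small points worth making explicit when you write it up. First, in assembling (c) on a cell you use the inductive statement $Du_{j}(p)=\sum_{l}\llbracket D(u_{j})_{i_{j}}^{l}(p)\rrbracket$; to upgrade this to $Du(p)=\sum_{j}\sum_{l}\llbracket D(u_{j})_{i_{j}}^{l}(p)\rrbracket$ you also need $Du(p)=\sum_{j}Du_{j}(p)$ at a.e.\ point of the cell, which holds because the supports of the $u_{j}$ stay uniformly separated there, so any first-order expansion of $u$ splits cluster by cluster. Second, on $B^{(0)}$ the identity $Du(p)=Q\llbracket D(\bfeta\circ u)(p)\rrbracket$ at a.e.\ $p$ follows because condition $(ii)$ in Definition~\ref{differentiable} forces $Du(p)=Q\llbracket\lambda\rrbracket$ for a single $\lambda$, and the expansion $\G(u(\exp_{p}\tau),Q\llbracket u_{1}(p)+\lambda\cdot\tau\rrbracket)=o(|\tau|)$ then gives $\bfeta\circ u(\exp_{p}\tau)=u_{1}(p)+\lambda\cdot\tau+o(|\tau|)$, i.e.\ $\lambda=D(\bfeta\circ u)(p)$.
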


%We conclude this section with the following useful \emph{Lipschitz decomposition property}.
%\begin{proposition}[Lipschitz decomposition, cf. {\cite[Proposition 1.6]{DLS11a}}] \label{Lip_decomp}
%Let $u = \sum_{l=1}^{Q} \llbracket u_{l} \rrbracket$ be a Lipschitz function, $ u \colon B \subset \Sigma \to \A_{Q}(\R^n)$. Suppose that there exists $p_{0} \in B$ and $i,j \in \{1,\dots,Q\}$ such that
%\begin{equation} \label{Lip_decomp:hp}
%|u_{i}(p_0) - u_{j}(p_0)| > 3(Q - 1) \Lip(u) {\rm diam}(B).
%\end{equation}
%Then, there are integers $Q_{1} < Q$ and $Q_{2} < Q$ with $Q_{1} + Q_{2} = Q$ and Lipschitz functions $u_{1} \colon B \to \A_{Q_1}(\R^n)$ and $u_{2} \colon B \to \A_{Q_2}(\R^n)$ such that $u = \llbracket u_{1} \rrbracket + \llbracket u_{2} \rrbracket$, $\Lip(u_{1}), \Lip(u_{2}) \leq \Lip(u)$ and $\spt(u_{1}(p)) \cap \spt(u_{2}(p)) = \emptyset$ for every $p \in B$. 
%\end{proposition}

\subsection{General currents: an overview.} 

%Born in the early 1960's after the foundational paper \cite{FF60} by Federer and Fleming, the notion of (rectifiable) current generalizes the concept of oriented submanifold of the Euclidean space or, more generally, of an ambient Riemannian manifold, in order to obtain a satisfactory analytical and topological formulation of ``$m$-dimensional domains of integration in a $d$-dimensional ambient space''.  

%The key idea is the following. If $\Sigma$ is an oriented $m$-dimensional submanifold (possibly with boundary) of a $d$-dimensional Riemannian manifold $\Omega$ with locally finite area and $\omega$ is a smooth differential $m$-form on $\Omega$ with compact support, then one can consider the quantity
%\[
%\llbracket \Sigma \rrbracket(\omega) := \int_{\Sigma} \omega \, .
%\]
%If one denotes by $\D^{m}(\Omega)$ the space of smooth compactly supported differential $m$-forms on $\Omega$, then it is easy to check that the functional $\omega \in \D^{m}(\Omega) \mapsto \llbracket \Sigma \rrbracket(\omega)$ is linear and continuous with respect to the standard locally convex topology on $\D^{m}(\Omega)$ (analogous to the standard topology on the space $C^{\infty}_{c}(\Omega)$ of smooth compactly supported functions). Moreover, the map $\Sigma \mapsto \llbracket \Sigma \rrbracket$ is injective: thus, in a sense, the value of $\llbracket \Sigma \rrbracket$ on all possible forms $\omega \in \D^{m}(\Omega)$ determines the submanifold $\Sigma$ itself. 
%\\

\medskip

Given an open set $\Omega \subset \R^d$, an $m$-dimensional \emph{current} in $\Omega$ is a linear and continuous functional
\[
T \colon \D^{m}(\Omega) \to \R,
\]
where $\D^{m}(\Omega)$ denotes the space of smooth and compactly supported differential $m$-forms in $\Omega$, equipped with the standard locally convex topology of $C^{\infty}_{c}(\Omega, \Lambda^{m}(\R^{d}))$, $\Lambda^{m}(\R^{d})$ being the vector space of $m$-covectors in $\R^{d}$ (cf. \cite[Sections 25 and 26]{Sim83}).

The space of $m$-currents in $\Omega$ is therefore the topological dual space of $\D^{m}(\Omega)$, and will be denoted by $\D_{m}(\Omega)$. Observe that if $\Sigma \subset \Omega$ is an oriented $m$-dimensional submanifold, then there is a corresponding $m$-current $\llbracket \Sigma \rrbracket \in \D_{m}(\Omega)$ defined by integration of $m$-forms on $\Sigma$ in the usual sense of differential geometry:
\[
\llbracket \Sigma \rrbracket(\omega) := \int_{\Sigma} \omega \hspace{0.5cm} \forall \, \omega \in \D^{m}(\Omega).
\] 

\begin{remark} \label{rmk:deltas}
In particular, if $p \in \Omega$ then the action of the $0$-dimensional current associated to $p$ is given by
\[
\llbracket p \rrbracket (f) = f(p) \quad \forall \, f \in \D^{0}(\Omega) = C^{\infty}_{c}(\Omega),
\] 
and thus $\llbracket p \rrbracket$ is the Dirac delta $\delta_{p}$ centered at $p$ and acting on smooth and compactly supported functions. Therefore, the notation here adopted for the current associated to a submanifold is coherent with that already used before to denote the $Q$-points in Euclidean space.
\end{remark}

If $T$ is an $m$-currents, the symbol $\partial T$ will denote its \emph{boundary}, namely the $(m-1)$-current whose action on any form $\omega \in \D^{m-1}(\Omega)$ is given by
\[
\partial T(\omega) := T({\rm d}\omega),
\]
where ${\rm d}\omega$ is the exterior differential of $\omega$.

%Observe that the definition of boundary is obtained by enforcing Stokes' theorem: in particular, $\partial \llbracket \Sigma \rrbracket = \llbracket \partial \Sigma \rrbracket$ if $\Sigma$ is a smooth $m$-dimensional submanifold in $\Omega$.

The \emph{mass} of $T \in \D_{m}(\Omega)$, denoted $\M(T)$, is the (possibly infinite) supremum of the values $T(\omega)$ among all forms $\omega \in \D^{m}(\Omega)$ with $\|\omega(p)\|_{c} \leq 1$ everywhere\, \footnote{Here, the symbol $\| \omega \|_{c}$ denotes the \emph{comass} of the covector $\omega$ (cf. \cite[Section 25]{Sim83}).}. If the class of competitors in the supremum is restricted only to those forms $\omega$ with $\spt(\omega) \subset W$ for some $W \Subset \Omega$ then we obtain the \emph{localized mass} of $T$ in $W$, denoted $\M_W(T)$.

%
%
%
%Again, for a submanifold $\Sigma$, computing $\M(\llbracket \Sigma \rrbracket)$ produces the expected value $\Ha^{m}(\Sigma)$. The definition of mass can be localized to any $W \Subset \Omega$ simply by restricting the class of competitors in the supremum only to those forms $\omega$ with $\spt(\omega) \subset W$. We will use the notation $\M_{W}(T)$ for the localized mass in $W$. Both the mass and the localized mass satisfy the triangle inequality $\M_{W}(T_{1} + T_{2}) \leq \M_{W}(T_{1}) + \M_{W}(T_{2})$.

The \emph{support} $\spt(T)$ of the current $T$ is the intersection of all closed subsets $C$ such that $T(\omega) = 0$ whenever $\spt(\omega) \subset \R^{d} \setminus C$.

Observe that if $T = \llbracket \Sigma \rrbracket$ is the current associated with an oriented $m$-dimensional submanifold $\Sigma$ then $\partial \llbracket \Sigma \rrbracket = \llbracket \partial \Sigma \rrbracket$ (here, $\partial \Sigma$ is oriented by the Stokes' orientation induced by $\Sigma$), $\M(\llbracket \Sigma \rrbracket) = \Ha^m(\Sigma)$ and $\spt(\llbracket \Sigma \rrbracket) = \overline{\Sigma}$. 

A suitable notion of convergence of currents can be defined by endowing $\D_{m}(\Omega)$ with the weak-$^{*}$ topology induced by $\D^{m}(\Omega)$. Hence, we will say that a sequence $\{T_{h}\}_{h=1}^{\infty} \subset \D_{m}(\Omega)$ \emph{converges} to $T \in \D_{m}(\Omega)$ \emph{in the sense of currents}, and we will write $T_{h} \weakto T$, if $T_{h}(\omega) \to T(\omega)$ for every $\omega \in \D^{m}(\Omega)$. It is clear that if $T_{h} \weakto T$ then also $\partial T_{h} \weakto \partial T$. Moreover, the mass is lower semi-continuous with respect to convergence in the sense of currents.

\subsection{Classes of currents.} We will denote by $\Rc_m(\Omega)$ the set of \emph{integer rectifiable $m$-currents} in $\Omega$, that is the set of currents $T$ whose action on forms is given by
\begin{equation} \label{corrente rettificabile}
T(\omega) := \int_{B} \langle \omega(p), \vec{\tau}(p) \rangle \, \theta(p) \, d\Ha^{m}(p) \hspace{0.5cm} \forall \, \omega \in \D^{m}(\Omega)\,,
\end{equation}
where:
\begin{itemize}
\item[•] $B \subset \Omega$ is a (countably) $m$-rectifiable set, i.e. $\Ha^m(B \cap K) < \infty$ for any compact $K \subset \Omega$, and moreover $B$ can be covered up to a $\Ha^{m}$-null set by countably many $m$-dimensional embedded submanifolds of $\R^d$ of class $C^1$;
\item[•] the function $\vec{\tau} \colon B \to \Lambda_{m}(\R^d)$, where $\Lambda_{m}(\R^{d})$ denotes the vector space of $m$-vectors in $\R^{d}$, is a measurable \emph{orientation} of $B$, namely $\vec{\tau}(p) = \tau_{1}(p) \wedge \dots \wedge \tau_{m}(p)$, where $\left( \tau_{1}(p), \dots, \tau_{m}(p) \right)$ is an orthonormal basis of the approximate tangent space ${\rm Tan}(B,p)$ at $\Ha^m$-a.e. $p \in B$;
\item[•] $\theta \in L^{1}_{loc}(B,\Z;\Ha^m \mres B)$.
\end{itemize}

We write $T = \llbracket B, \vec{\tau}, \theta \rrbracket$ if $T$ is an integer rectifiable current as in \eqref{corrente rettificabile} .

Integer rectifiable currents with integer rectifiable boundary are called \emph{integral currents}. We write $T \in \I_{m}(\Omega)$ if $T$ is an integral $m$-current in $\Omega$. 

%One of the cornerstones of the Federer - Fleming theory, known as the Boundary Rectifiability Theorem (cf. \cite[Theorem 4.2.16]{Federer69}), shows that for $T \in \Rc_{m}(\Omega)$ the condition $\M(\partial T) < \infty$ suffices to conclude that $T$ is actually integral. More importantly for our purposes, the class of integral currents enjoys good compactness properties, as stated in the next, remarkable, \emph{Compactness Theorem}.
%\begin{theorem}[Compactness, cf. {\cite[Theorem 27.3]{Sim83}}] \label{compactness}
%Let $\{T_{h}\}_{h=1}^{\infty} \subset \I_{m}(\Omega)$ be a sequence of integral currents such that
%\begin{equation} \label{compact:hyp}
%\sup_{h \geq 1} \left( \M_{W}(T_{h}) + \M_{W}(\partial T_{h}) \right) < \infty \hspace{0.5cm} \forall \, W \Subset \Omega.
%\end{equation}
%Then, there exist $T \in \I_{m}(\Omega)$ and a subsequence $\{ T_{h_j} \}$ such that $T_{h_j} \weakto T$.
%\end{theorem}

If $K \subset \Omega$ is a compact set, we will denote by $\Rc_{m,K}(\Omega)$ (resp. $\I_{m,K}(\Omega)$) the set of integer rectifiable (resp. integral) $m$-currents $T$ with $\spt(T) \subset K$. We also set
\[
\F_{m,K}(\Omega) := \left\lbrace T = R + \partial S \, \colon \, R \in \Rc_{m,K}(\Omega) \, \mbox{ and } \, S \in \Rc_{m+1,K}(\Omega) \right\rbrace,
\]
and we let $\F_{m}(\Omega)$ be the union of the sets $\F_{m,K}(\Omega)$ over all compact $K \subset \Omega$. Currents $T \in \F_{m}(\Omega)$ are called $m$-dimensional (integral) \emph{flat chains} in $\Omega$. On each set $\F_{m,K}(\Omega)$ one can define a metric as follows:  for $T \in \F_{m,K}(\Omega)$, set
\[
\Fl_{K}(T) := \inf\left\lbrace \M(R) + \M(S) \, \colon \, R \in \Rc_{m,K}(\Omega), S \in \Rc_{m+1,K}(\Omega) \, \mbox{such that } \, T = R + \partial S \right\rbrace,
\] 
and then let the distance between $T_{1}$ and $T_{2}$ (usually called \emph{flat distance}) be given by
\[
d_{\Fl_{K}}(T_{1}, T_{2}) := \Fl_{K}(T_{1} - T_{2}).
\]
It turns out that the resulting metric space $\left( \F_{m,K}(\Omega), d_{\Fl_K} \right)$ is complete. Moreover, the mass functional is lower semi-continuous with respect to the \emph{flat convergence}. It is immediate to show that if a sequence $\{T_{h}\}$ of flat chains converges to $T$ with respect to the flat distance then it also weakly converges to $T$. The two notions of convergence are in fact equivalent if $\{ T_{h} \}$ is a sequence of integral currents with equi-bounded masses and masses of the boundaries (cf. \cite[Theorem 31.2]{Sim83}).

\subsection{Push-forwards through multiple valued functions} \label{sec:pf}

Let $T \in \D_{m}(\Omega)$ and suppose $f \colon \Omega \to \R^{n}$ is a $C^{\infty}$ map. If $f$ is \emph{proper} (i.e. $f^{-1}(K)$ is compact for any compact $K \subset \R^{n}$), then the \emph{push-forward} of $T$ through $f$ is the current $f_{\sharp}T \in \D_{m}(\R^{n})$ defined by
\[
f_{\sharp}T(\omega) := T(f^{\sharp}\omega) \hspace{0.5cm} \forall \, \omega \in \D^{m}(\R^{n}),
\] 
where $f^{\sharp}\omega$ denotes the pull-back of the form $\omega$ through $f$. The push-forward operator $f_{\sharp}$ is linear, and moreover an elementary computation shows that it commutes with the boundary operator:
\[
\partial(f_{\sharp} T) = f_{\sharp}(\partial T)\,.
\]

\medskip

If $T = \llbracket B, \vec{\tau}, \theta \rrbracket$ is rectifiable, then it is straightforward to verify that the push-forward $f_{\sharp}T$ is given explicitly by
\[
f_{\sharp}T(\omega) = \int_{B} \langle \omega(f(p)), Df(p)_{\sharp}\vec{\tau}(p) \rangle \, \theta(p) \, d\Ha^{m}(p) \hspace{0.5cm} \forall \, \omega \in \D^{m}(\R^{n}), 
\]
where 
\[
Df(p)_{\sharp}\vec{\tau}(p) := (Df(p) \cdot \tau_{1}(p)) \wedge \dots \wedge (Df(p) \cdot \tau_{m}(p)).
\]
The hypotheses on $f$ can in fact be relaxed, as the above formula makes sense whenever $f \colon B \to \R^{n}$ is Lipschitz and proper. In this case, $Df(p)$ has to be regarded as the tangent map of $f$ at $p$, which exists at $\Ha^{m}$-a.e. $p \in B$ since $B$ is rectifiable and $f$ is Lipschitz. Also, it is not difficult to show that in this case $f_{\sharp}T$ is also $m$-rectifiable, and in fact $f_{\sharp}T = \llbracket f(B), \vec{\eta}, \Theta \rrbracket$, where $\vec{\eta}(y) = \eta_1(y) \wedge \dots \wedge \eta_m(y)$ is a simple unit $m$-vector field orienting ${\rm Tan}(f(B),y)$ at $\Ha^m$-a.e. $y \in f(B)$ and
\[
\Theta(y) := \sum_{p \in B_{+} \colon f(p) = y} \theta(p) \left\langle \vec{\eta}(y), \frac{Df(p)_{\sharp}\vec{\tau}(p)}{|Df(p)_{\sharp}\vec{\tau}(p)|} \right\rangle\,, \qquad B_+ := \left\lbrace p \in B \, \colon \, {\bf J}f(p) > 0 \right\rbrace\,,
\]
having denoted ${\bf J}f(p) = |Df(p)_{\sharp}\vec{\tau}(p)|$ the tangential Jacobian of $f$.
%Furthermore, since $|Df(p)_{\sharp}\vec{\tau}(p)|$ coincides with the Jacobian determinant 
%\[
%{\bf J}f(p) := \sqrt{\det\left( (Df(p))^{T} \cdot Df(p) \right)},
%\]
%from the area formula it follows that
%\[
%f_{\sharp}T(\omega) = \int_{f(B)} \left\langle \omega(y), \sum_{p \in B_{+}  \colon  f(p) = y} \theta(p)  \frac{Df(p)_{\sharp}\vec{\tau}(p)}{|Df(p)_{\sharp}\vec{\tau}(p)|} \right\rangle \, d\Ha^{m}(y),
%\]
%with $B_{+} := \lbrace p \in B \, \colon \, {\bf J}f(p) > 0 \rbrace$. Moreover, $f(B)$ is an $m$-rectifiable subset of $\R^{n}$, and for $\Ha^{m}$-a.e. $y \in f(B)$ one has
%\[
%\frac{Df(p)_{\sharp}\vec{\tau}(p)}{|Df(p)_{\sharp}\vec{\tau}(p)|} = \pm \vec{\eta}(y)
%\]
%for all $p \in B_{+}$ such that $f(p) = y$, where $\vec{\eta}(y) = \eta_{1}(y) \wedge \dots \wedge \eta_{m}(y)$ is a simple unit $m$-vector orienting ${\rm Tan}(f(B),y)$. It follows that $f_{\sharp}T$ is a rectifiable $m$-current in $\R^{n}$, and in fact $f_{\sharp}T = \llbracket f(B), \vec{\eta}, \Theta \rrbracket$, with
%\[
%\Theta(y) := \sum_{p \in B_{+} \colon f(p) = y} \theta(p) \left\langle \vec{\eta}(y), \frac{Df(p)_{\sharp}\vec{\tau}(p)}{|Df(p)_{\sharp}\vec{\tau}(p)|} \right\rangle.
%\]
%\\

\medskip

In \cite{DLS13a}, the authors take advantage of the Lipschitz selection property stated in Proposition \ref{Lip_sel} above in order to tackle the problem of extending the above results to the context of multiple valued functions.

More precisely, let us assume that $\Sigma$ is an $m$-dimensional $C^{1}$ submanifold of $\R^{d}$, and $B \subset \Sigma$ is $\Ha^{m}$-measurable. Recall that a measurable function $u \colon B \subset \Sigma \to \A_{Q}(\R^{n})$ is \emph{proper} if there exists a measurable selection $u = \sum_{l=1}^{Q} \llbracket u_{l} \rrbracket$ such that the set $\bigcup_{l=1}^{Q} \overline{u_{l}^{-1}(K)}$ is compact for any compact $K \subset \R^{n}$.

%The first step is to define the push-forward of $C^{1}$ submanifolds. Hence, in what follows we will again assume  We will also assume that $\Sigma$ is oriented with orientation $\vec{\tau}$.
%\begin{definition}[Proper $Q$-valued functions, cf. {\cite[Definition 1.2]{DLS13a}}]
%A measurable function $u \colon B \subset \Sigma \to \A_{Q}(\R^{n})$ is \emph{proper} if there exists a measurable selection $u = \sum_{l=1}^{Q} \llbracket u_{l} \rrbracket$ such that the set $\bigcup_{l=1}^{Q} \overline{u_{l}^{-1}(K)}$ is compact for any compact $K \subset \R^{n}$. If such a selection exists, then clearly the same property is indeed satisfied by every measurable selection.
%\end{definition}

\begin{definition}[$Q$-valued push-forward, cf. {\cite[Definition 1.3]{DLS13a}}] \label{Q_pf:def}
Let $B \subset \Sigma$ be as above, and let $u \colon B \to \A_{Q}(\R^n)$ be Lipschitz and proper. Then, the \emph{push-forward} of $B$ through $u$ is the current $\mathbf{T}_{u} := \sum_{i \in \N} \sum_{l=1}^{Q} (u_{i}^{l})_{\sharp}\llbracket B_i \rrbracket$, where $B_i$ and $u_{i}^{l}$ are as in Proposition \ref{Lip_sel}: that is,
\begin{equation} \label{Q_pf:eq}
\mathbf{T}_{u}(\omega) := \sum_{i \in \mathbb{N}} \sum_{l=1}^{Q} \int_{B_i} \left\langle \omega(u_{i}^{l}(p)), Du_{i}^{l}(p)_{\sharp}\vec{\tau}(p) \right\rangle \, d\Ha^m(p) \hspace{0.5cm} \forall \, \omega \in \D^{m}(\R^n).
\end{equation}
\end{definition}

Using the classical results concerning the push-forward of integer rectifiable currents through (single valued) proper Lipschitz functions recalled above and the properties of Lipschitz selections, it is not difficult to conclude that Definition \ref{Q_pf:def} is well-posed.
\begin{proposition}[Representation of the push-forward, cf. {\cite[Proposition 1.4]{DLS13a}}] \label{Q_pf:repr}
The definition of the action of $\mathbf{T}_{u}$ in \eqref{Q_pf:eq} does not depend on the chosen partition $B_i$, nor on the chosen decomposition $\{u_{i}^{l}\}$. If $u = \sum_{l} \llbracket u_{l} \rrbracket$, we are allowed to write
\begin{equation}
\mathbf{T}_{u}(\omega) = \int_{B} \sum_{l=1}^{Q} \left\langle \omega(u_{l}(p)), Du_{l}(p)_{\sharp}\vec{\tau}(p) \right\rangle \, d\Ha^{m}(p) \hspace{0.5cm} \forall \, \omega \in \mathcal{D}^{m}(\R^n).
\end{equation}
Thus, $\mathbf{T}_{u}$ is a (well-defined) integer rectifiable $m$-current in $\R^n$ given by $\mathbf{T}_{u} = \llbracket \mathrm{Im}(u),  \vec{\eta}, \Theta \rrbracket$, where:
\begin{itemize}
\item[$(R1)$] $\mathrm{Im}(u) = \bigcup_{p \in B} \spt(u(p)) = \bigcup_{i \in \mathbb{N}} \bigcup_{l=1}^{Q} u_{i}^{l}(B_i)$ is an $m$-rectifiable set in $\R^n$;
\item[$(R2)$] $\vec{\eta}$ is a Borel unit $m$-vector field orienting $\mathrm{Im}(u)$; moreover, for $\Ha^m$-a.e. $y \in \mathrm{Im}(u)$, we have $Du_{i}^{l}(p)_{\sharp}\vec{\tau}(p) \neq 0$ for every $i,l,p$ such that $u_{i}^{l}(p) = y$ and
\begin{equation}
\vec{\eta}(y) = \pm \frac{Du_{i}^{l}(p)_{\sharp}\vec{\tau}(p)}{|Du_{i}^{l}(p)_{\sharp}\vec{\tau}(p)|};
\end{equation}
\item[$(R3)$] for $\Ha^m$-a.e. $y \in \mathrm{Im}(u)$, the (Borel) multiplicity function $\Theta$ equals
\begin{equation}
\Theta(y) = \sum_{i,l,p \, \colon \, u_{i}^{l}(p) = y} \left\langle \vec{\eta}(y), \frac{Du_{i}^{l}(p)_{\sharp}\vec{\tau}(p)}{|Du_{i}^{l}(p)_{\sharp}\vec{\tau}(p)|}\right\rangle.
\end{equation}
\end{itemize}
\end{proposition}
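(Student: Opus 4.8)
The plan is to split the statement into three parts: that \eqref{Q_pf:eq} defines a current of locally finite mass once a Lipschitz selection $\{B_{i},u_{i}^{l}\}$ as in Proposition~\ref{Lip_sel} is fixed; that its action does not depend on the chosen partition and decomposition, which simultaneously yields the integral formula; and that the resulting current is integer rectifiable with the description $\llbracket\mathrm{Im}(u),\vec{\eta},\Theta\rrbracket$. For the first part, each summand $(u_{i}^{l})_{\sharp}\llbracket B_{i}\rrbracket$ is an integer rectifiable $m$-current by the classical push-forward theory for single-valued proper Lipschitz maps recalled at the start of this section, with mass on an open set $W$ bounded by a constant $C=C(Q,m,\Lip(u))$ times $\Ha^{m}(B_{i}\cap(u_{i}^{l})^{-1}(W))$; summing over $i,l$ and using that $\{B_{i}\}$ partitions $B$, the total mass on $W$ is controlled by a constant times $\Ha^{m}(\{p\in B:\spt(u(p))\cap W\neq\emptyset\})$, which is finite when $\overline{W}$ is compact because $u$ is proper (a compact subset of the $C^{1}$ manifold $\Sigma$ has finite $\Ha^{m}$-measure). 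Hence $\mathbf{T}_{u}$ is a well-defined current of locally finite mass, and in particular the series in \eqref{Q_pf:eq} converges absolutely for every $\omega\in\D^{m}(\R^{n})$.

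For the independence I would take two selections $\{B_{i},u_{i}^{l}\}$ and $\{\tilde B_{j},\tilde u_{j}^{k}\}$ as in Proposition~\ref{Lip_sel} and set $C_{ij}:=B_{i}\cap\tilde B_{j}$, which gives a measurable partition of $B$ refining both. It suffices to check that for $\Ha^{m}$-a.e.\ $p\in C_{ij}$ the multisets $\{(u_{i}^{l}(p),Du_{i}^{l}(p))\}_{l=1}^{Q}$ and $\{(\tilde u_{j}^{k}(p),D\tilde u_{j}^{k}(p))\}_{k=1}^{Q}$ in $\R^{n}\times\R^{n\times m}$ coincide, because for fixed $p$, $\omega$, and $\vec{\tau}(p)$ the integrand in \eqref{Q_pf:eq} is a symmetric function of such a multiset; summing over $i,j$ then equates the two expressions, and the integral formula is the special case in which one decomposition is read off a measurable selection $u=\sum_{l}\llbracket u_{l}\rrbracket$. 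To obtain the multiset identity I would use the structure in Proposition~\ref{Lip_sel}: by property $(b)$ the pattern of coincidences among $u_{i}^{1},\dots,u_{i}^{Q}$ is constant on $B_{i}$, so on $C_{ij}$ the indices split into blocks on which the branches are identically equal and across which they are everywhere distinct, and likewise for the $\tilde u_{j}^{k}$; at each $p\in C_{ij}$ the $Q$-point $u(p)$ is thereby written in two ways as a sum of integer multiples of distinct Dirac masses, so uniqueness forces the block sizes to agree up to a permutation $\pi_{p}$ with $u_{i}^{l_{a}}(p)=\tilde u_{j}^{k_{\pi_{p}(a)}}(p)$ on representatives. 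Partitioning $C_{ij}$ further by the finitely many values of $\pi_{p}$, on each piece the matched Lipschitz branches agree $\Ha^{m}$-a.e., hence so do their tangential derivatives at $\Ha^{m}$-a.e.\ point (at a density point where both are differentiable, a discrepancy of the derivatives would confine the coincidence set to a proper subvariety, contradicting density one); combining this with property $(c)$, which identifies $\sum_{l}\llbracket Du_{i}^{l}(p)\rrbracket$ with the a.e.-defined $Q$-valued differential $Du(p)$, and with the fact that branches in a common block share value and derivative, the multiset identity follows.

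For the representation I would use linearity to write $\mathbf{T}_{u}=\sum_{i,l}(u_{i}^{l})_{\sharp}\llbracket B_{i}\rrbracket$ and invoke the single-valued formulas recalled above to get $(u_{i}^{l})_{\sharp}\llbracket B_{i}\rrbracket=\llbracket u_{i}^{l}(B_{i}),\vec{\eta}_{i,l},\Theta_{i,l}\rrbracket$, with $u_{i}^{l}(B_{i})$ $m$-rectifiable, $\vec{\eta}_{i,l}(y)=\pm Du_{i}^{l}(p)_{\sharp}\vec{\tau}(p)/|Du_{i}^{l}(p)_{\sharp}\vec{\tau}(p)|$ at $y=u_{i}^{l}(p)$ whenever $\mathbf{J}u_{i}^{l}(p)>0$, and $\Theta_{i,l}$ the corresponding signed count. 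Then $\mathrm{Im}(u)=\bigcup_{i,l}u_{i}^{l}(B_{i})=\bigcup_{p\in B}\spt(u(p))$ is a countable union of $m$-rectifiable sets of locally finite $\Ha^{m}$-measure, hence $m$-rectifiable, which is $(R1)$. Fixing a Borel orientation $\vec{\eta}$ of $\mathrm{Im}(u)$, and using that the approximate tangent plane of each $u_{i}^{l}(B_{i})$ coincides with that of $\mathrm{Im}(u)$ at $\Ha^{m}$-a.e.\ point of $u_{i}^{l}(B_{i})$ together with the fact, from the area formula, that $u_{i}^{l}(\{\mathbf{J}u_{i}^{l}=0\})$ is $\Ha^{m}$-null (so that for $\Ha^{m}$-a.e.\ $y$ all relevant preimages have positive Jacobian), I would obtain $(R2)$ and, after summing the $\Theta_{i,l}$, the formula in $(R3)$. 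The multiplicity $\Theta$ so obtained is Borel, integer-valued (a finite sum of $\pm1$'s), and locally $\Ha^{m}$-integrable by the area formula and properness; therefore $\mathbf{T}_{u}=\llbracket\mathrm{Im}(u),\vec{\eta},\Theta\rrbracket$ is an integer rectifiable $m$-current.

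I expect the hard part to be the pointwise coincidence of the first-order jets $\sum_{l}\llbracket(u_{i}^{l}(p),Du_{i}^{l}(p))\rrbracket$ across two different Lipschitz selections, since it is precisely what makes Definition~\ref{Q_pf:def} legitimate. The point is to prove it not via an abstract uniqueness statement for the $Q$-valued differential but by exploiting the rigidity built into Proposition~\ref{Lip_sel}, above all the non-crossing property $(b)$, which upgrades a coincidence of two branches at a single point to a global identity of functions; once this is in place, the remaining steps are routine consequences of the single-valued push-forward theory and the area formula.
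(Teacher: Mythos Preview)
Your proposal is correct and follows exactly the strategy the paper indicates. Note that the paper does not give its own proof of this proposition: it simply states that, ``using the classical results concerning the push-forward of integer rectifiable currents through (single valued) proper Lipschitz functions \dots\ and the properties of Lipschitz selections, it is not difficult to conclude the validity'' of the statement, and refers to \cite[Proposition 1.4]{DLS13a}. Your three-step argument (locally finite mass; independence via a common refinement and matching of first-order jets; representation via the single-valued push-forward formulas and the area formula) is precisely the natural unpacking of that sentence.

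One small simplification you could make in the independence step: rather than partitioning $C_{ij}$ by the permutation $\pi_{p}$ and invoking a density-point argument for the derivatives, you can appeal directly to the intrinsic nature of the $Q$-valued differential. At $\Ha^{m}$-a.e.\ $p$, property $(c)$ of Proposition~\ref{Lip_sel} gives $Du(p)=\sum_{l}\llbracket Du_{i}^{l}(p)\rrbracket=\sum_{k}\llbracket D\tilde u_{j}^{k}(p)\rrbracket$, and condition $(ii)$ in Definition~\ref{differentiable} (which holds for both selections thanks to property $(b)$) forces the derivative attached to each value $v\in\spt(u(p))$ to be uniquely determined. Hence the multisets $\{(u_{i}^{l}(p),Du_{i}^{l}(p))\}_{l}$ and $\{(\tilde u_{j}^{k}(p),D\tilde u_{j}^{k}(p))\}_{k}$ coincide without any further partitioning. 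This is the same conclusion you reach, just with less bookkeeping.
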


\begin{remark} \label{ext:lip_man}
The definition of push-forward can be easily extended to the case when the domain $\Sigma$ is a Lipschitz oriented $m$-dimensional submanifold. In this case, indeed, there are countably many submanifolds $\Sigma_{j}$ of class $C^{1}$ which cover $\Ha^{m}$-a.a. $\Sigma$, and such that the orientations of $\Sigma_{j}$ and $\Sigma$ coincide on their intersection (see \cite[Theorem 5.3]{Sim83}). Hence, if $B \subset \Sigma$ is a measurable subset and $u \colon B \to \A_{Q}(\R^{n})$ is Lipschitz and proper, then the push-forward of $\llbracket B \rrbracket$ through $u$ can be defined to be the integer rectifiable current ${\bf T}_{u} := \sum_{j=1}^{\infty} {\bf T}_{u_{j}}$, where $u_{j} := u|_{B \cap \Sigma_{j}}$. All the conclusions of Proposition \ref{Q_pf:repr} remain valid in this context (cf. \cite[Lemma 1.7]{DLS13a}). Furthermore, the push-forward is invariant with respect to bi-Lipschitz homeomorphisms: if $u \colon \Sigma \to \A_{Q}(\R^{n})$ is Lipschitz and proper, $\phi \colon \tilde{\Sigma} \to \Sigma$ is bi-Lipschitz and $\tilde{u} := u \circ \phi$, then ${\bf T}_{\tilde{u}} = {\bf T}_{u}$. 

In Appendix \ref{ssec:pf_fc} we will take advantage of the polyhedral approximation theorem in order to further extend the domain of the multi-valued push-forward operator to the much larger class of integral flat chains.
\end{remark}

The notion of push-forward allows one to associate a rectifiable current to the \emph{graph} of a multiple valued function. Here and in the sequel, if $\Sigma \subset \R^{d}$ is an $m$-dimensional Lipschitz submanifold and $u \colon B \subset \Sigma \to \A_{Q}(\R^{n})$ is a $Q$-valued map we will denote by ${\rm Gr}(u)$ the \emph{set-theoretical graph} of $u$, given by
\[
{\rm Gr}(u) := \lbrace \left( p,v \right) \in \R^{d} \times \R^{n} \, \colon \, p \in B, \, v \in \spt(u(p)) \rbrace.
\]
\begin{definition}\label{def_graph}
Let $u = \sum_{l} \llbracket u_{l} \rrbracket \colon B \subset \Sigma \to \A_{Q}(\R^{n})$ be a proper Lipschitz $Q$-valued map, and define the map 
\[
{\rm Id} \times u \colon p \in B \mapsto \sum_{l=1}^{Q} \llbracket \left( p, u_{l}(p) \right) \rrbracket \in \A_{Q}(\R^{d} \times \R^{n}).
\]
Then, the push-forward ${\bf T}_{{\rm Id} \times u}$ is the integer rectifiable current associated to ${\rm Gr}(u)$, and will be denoted by ${\bf G}_{u}$.
\end{definition}

An important feature of the notion of push-forward of Lipschitz manifolds through multiple valued functions is that, exactly as in the single valued context, it behaves nicely with respect to the boundary operator.

\begin{theorem}[Boundary of the push-forward] \label{pf_bdry:thm}
Let $\Sigma \subset \R^{d}$ be an $m$-dimensional Lipschitz manifold with Lipschitz boundary, and let $u \colon \Sigma \to \A_{Q}(\R^{n})$ be a proper Lipschitz map. Then, $\partial {\bf T}_{u} = {\bf T}_{u|_{\partial \Sigma}}$. 
\end{theorem}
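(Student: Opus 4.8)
The plan is to reduce to the single-valued statement $\partial(f_\sharp\llbracket\Sigma\rrbracket)=f_\sharp\llbracket\partial\Sigma\rrbracket$, which is elementary, by peeling off one sheet at a time via a double induction on the number of values $Q$ and on the dimension $m$. The base case $Q=1$ is exactly the classical identity recalled in the preliminaries (after reducing a Lipschitz $\Sigma$ to a countable union of $C^1$ pieces as in Remark \ref{ext:lip_man}, observing that the boundary operator and the countable sum both pass to the limit in the flat topology). So assume $Q\ge 2$ and that the theorem holds for all smaller numbers of values, in every dimension.

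The induction step splits into two cases according to whether $u$ can be globally decomposed. \emph{Case A: $u$ decomposes.} Suppose $\Sigma$ can be covered by finitely many (or countably many) open sets on each of which $u$ splits as $\llbracket u_1\rrbracket+\llbracket u_2\rrbracket$ with $u_i$ taking $Q_i<Q$ values and disjoint supports (this is exactly what Proposition \ref{Lip_decomp} furnishes wherever two sheets are far apart). On each such piece ${\bf T}_u={\bf T}_{u_1}+{\bf T}_{u_2}$ by linearity of the push-forward in \eqref{Q_pf:eq}, and the inductive hypothesis applies to each $u_i$, giving $\partial{\bf T}_{u_i}={\bf T}_{u_i|_{\partial\Sigma}}$; summing and using that ${\bf T}_{u|_{\partial\Sigma}}={\bf T}_{u_1|_{\partial\Sigma}}+{\bf T}_{u_2|_{\partial\Sigma}}$ closes the case. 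The technical point here is to globalize a local decomposition: one uses a partition of the domain into small pieces (on pieces of diameter below the threshold in \eqref{Lip_decomp:hp}, either all sheets collapse, reducing $Q$, or the hypothesis of Proposition \ref{Lip_decomp} holds) and checks that the interior boundaries introduced by the partition cancel in pairs, since ${\bf T}_u$ restricted to a piece has boundary supported on $\partial\Sigma$ plus the artificial cut, and adjacent pieces carry opposite orientations on the shared cut. This is where the induction on $m$ enters: the cuts are $(m-1)$-dimensional and the cancellation statement on them is itself an instance of the theorem one dimension down.

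\emph{Case B: $u$ does not decompose}, i.e. at every point of $\Sigma$ all $Q$ values of $u$ coincide — so $u(p)=Q\llbracket\zeta(p)\rrbracket$ for a single-valued Lipschitz $\zeta\colon\Sigma\to\R^n$. Then by \eqref{Q_pf:eq}, ${\bf T}_u=Q\,\zeta_\sharp\llbracket\Sigma\rrbracket$, and the $Q=1$ result gives $\partial{\bf T}_u=Q\,\zeta_\sharp\llbracket\partial\Sigma\rrbracket={\bf T}_{u|_{\partial\Sigma}}$. More precisely, to cover the intermediate situation one argues: if there is an open subset of $\Sigma$ on which two sheets stay distinct, shrink to a small ball there where Proposition \ref{Lip_decomp} applies and one is in Case A; otherwise the sheets pairwise coincide a.e. and one is in Case B. Patching the two regimes over $\Sigma$ again requires the cancellation-on-cuts lemma.

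The main obstacle is precisely this gluing: making rigorous that $\partial{\bf T}_u$ can be computed by localizing, decomposing on each piece, and summing, with the internal boundary contributions cancelling. One clean way is to work in the flat topology: approximate, choose a fine triangulation or a Whitney-type decomposition of $\Sigma$ adapted to the threshold $3(Q-1)\Lip(u)\,{\rm diam}(\cdot)$ in \eqref{Lip_decomp:hp}, note ${\bf T}_u=\sum_k{\bf T}_{u|_{\Sigma_k}}$, apply the inductive hypothesis (in $Q$) on each $\Sigma_k$ to get $\partial{\bf T}_{u|_{\Sigma_k}}={\bf T}_{u|_{\partial\Sigma_k}}$, and then verify that $\sum_k{\bf T}_{u|_{\partial\Sigma_k}}={\bf T}_{u|_{\partial\Sigma}}$ because the contributions over the common faces $\Sigma_k\cap\Sigma_{k'}$ appear with opposite induced orientations and hence cancel — this last identity being exactly the theorem in dimension $m-1$ applied to $u$ restricted to those faces, which is why both inductions are needed simultaneously. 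I would carry out the base cases first, then Case B, then the decomposition/localization machinery, and finally the cancellation argument, which is the heart of the proof.
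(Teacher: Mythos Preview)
Your double induction on $Q$ and $m$, the use of the Lipschitz decomposition (Proposition~\ref{Lip_decomp}) on small pieces, and the cancellation of interior faces are all correct and match the paper's strategy. But there is a genuine gap in your dichotomy between Case~A and Case~B. On a piece $\Sigma_k$ that contains a branch point of $u$ (a point where two sheets coincide) together with nearby points where the sheets are distinct but close, the hypothesis \eqref{Lip_decomp:hp} can fail at \emph{every} point of $\Sigma_k$---the separation never exceeds $3(Q-1)\Lip(u)\,{\rm diam}(\Sigma_k)$---while the sheets do \emph{not} collapse to a single value on $\Sigma_k$. So neither Case~A nor Case~B applies there, and you cannot invoke the inductive hypothesis on $Q$. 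Refining the partition does not cure this: near a branch point the separation between sheets is typically of the same order as the distance to the branch point, so at every finite scale there remain ``bad'' pieces where the dichotomy fails. A Whitney-type decomposition adapted to the branch set would need infinitely many pieces accumulating on it, and you have no mechanism to control that sum.

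The paper's proof confronts exactly this obstacle. It first passes to graphs ${\bf G}_u$ (projecting at the end), reduces to $\Sigma=[0,1]^m$, and takes a dyadic decomposition at a fixed scale $2^{-h}$. On the good cubes, where \eqref{Lip_decomp:hp} holds, it applies Proposition~\ref{Lip_decomp} and the induction on $Q$ as you suggest. On the bad cubes $C$, where ${\rm diam}(u)$ is small, it does \emph{not} try to decompose $u$; instead it replaces ${\bf G}_{u|_C}$ by an auxiliary current
\[
R_k \;:=\; Q\,{\bf G}_{(\bfeta\circ u)|_C} \;+\; \sigma_\sharp\bigl(\llbracket(0,1)\rrbracket\times {\bf G}_{u|_{\partial C}}\bigr),
\]
where $\sigma$ is the affine homotopy from the single-valued average $\bfeta\circ u$ to the identity on the fibre. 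The homotopy formula gives $\partial R_k={\bf G}_{u|_{\partial C}}$ (here the induction on $m$ is used, to ensure $\partial{\bf G}_{u|_{\partial C}}=0$), and the smallness of ${\rm diam}(u)$ on bad cubes bounds the mass of the homotopy term by $C(2^{-h})^m$. Summing over all cubes yields a current $T_h$ with $\partial T_h={\bf G}_{u|_{\partial\Sigma}}$ and uniformly bounded mass and boundary mass; compactness then produces a limit $T$ with the correct boundary, and a final multiplicity argument (using that $T_h$ agrees with ${\bf G}_u$ over the good cubes and that both project to $Q\llbracket\Sigma\rrbracket$) identifies $T={\bf G}_u$. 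This homotopy-to-the-average construction on bad cubes, together with the compactness passage to the limit, is the missing idea in your sketch.
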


The first instance of such a result appears already in \cite[Section 1.6]{Almgren00}, where Almgren relies on the intersection theory of flat chains to define a multi-valued push-forward operator acting on flat chains and study its properties. A more elementary proof was then suggested by De Lellis and Spadaro in \cite[Theorem 2.1]{DLS13a}. In Appendix \ref{sec:commutation}, we will provide a slightly simplified version of their proof, relying on a double inductive process, both on the number $Q$ of values that the function takes and on the dimension $m$ of the domain.

\section{$Q$-multisections} \label{sec:sections}

The goal of this section is to define the notion of \emph{multiple valued section} of an abstract vector bundle over a given Riemannian base manifold. The main ideas of this section were introduced in the unpublished note \cite{Allard}, where Allard studies the properties of the push-forward of the elements of a fairly large subclass of the class of integer rectifiable currents on a given manifold through coherent and vertically limited $Q$-valued sections of a vector bundle on the manifold. This is more than what we need to prove the reparametrization theorem of Section \ref{sec:reparam}, for which we will instead only use the elementary theory of \S\, \ref{sec:pf} and the new techniques discussed in the coming paragraphs.

\subsection{Preliminary definitions.} In what follows, $\Sigma = \Sigma^{m}$ denotes an $m$-dimensional Riemannian manifold of class $C^{1}$, and $E$ is an $(m+n)$-dimensional manifold which is the total space of a vector bundle $\Pi \colon E \to \Sigma$ of rank $n$ and class $C^{1}$ over the base manifold $\Sigma$. Following standard notations, we will denote by $E_{p} := \Pi^{-1}(\{p\})$ the \emph{fiber} over the base point $p \in \Sigma$. We will let $\{(\U_{\alpha}, \Psi_{\alpha})\}_{\alpha \in I}$ be a locally finite family of \emph{local trivializations} of the bundle: thus, $\{\U_{\alpha}\}$ is a locally finite open covering of the manifold $\Sigma$, and 
\[
\Psi_{\alpha} \colon \Pi^{-1}(\U_{\alpha}) \to \U_{\alpha} \times \R^{n}
\]
are differentiable maps satisfying:

\begin{itemize}
\item[$(i)$] $\p_{1} \circ \Psi_{\alpha} = \Pi|_{\Pi^{-1}(\U_{\alpha})}$, where $\p_{1} \colon \U_{\alpha} \times \R^n \to \U_{\alpha}$ is the projection on the first factor;

\item[$(ii)$] for any $\alpha, \beta \in I$ with $\U_{\alpha} \cap \U_{\beta} \neq \emptyset$, there exists a differentiable map
\[
\tau_{\alpha\beta} \colon \U_{\alpha} \cap \U_{\beta} \to {\rm GL}(n, \R)
\]
with the property that
\[
\Psi_{\alpha} \circ \Psi_{\beta}^{-1}(p,v) = (p, \tau_{\alpha\beta}(p) \cdot v) \hspace{1cm} \forall \, p \in \U_{\alpha} \cap \U_{\beta}, \, \forall \, v \in \R^{n}.
\]
\end{itemize}

Without loss of generality, we can assume that each open set $\U_{\alpha}$ is also the domain of a local chart $\psi_{\alpha} \colon \U_{\alpha} \to \R^{m}$ on $\Sigma$.

Let now $Q$ be an integer, $Q \geq 1$. 

%We adopt the convention that the set $\N$ of natural numbers contains zero.

\begin{definition}[$Q$-valued sections, Allard {\cite{Allard}}] \label{multi:def}
Given a vector bundle $\Pi \colon E \to \Sigma$ as above, and a subset $B \subset \Sigma$, a $Q$-multisection over $B$ is a map
\begin{equation} \label{multi:eq1}
M \colon E \to \N
\end{equation}
with the property that
\begin{equation} \label{multi:eq2}
\sum_{\xi \in E_{p}} M(\xi) = Q \hspace{1cm} \mbox{for every } p \in B.
\end{equation}
\end{definition}

\begin{remark}
If $s \colon B \to E$ is a classical local section, then the map $M \colon E \to \N$ defined by
\begin{equation} \label{1sect}
M(\xi) :=
\begin{cases}
1, & \mbox{if there exists } p \in B \mbox{ such that } \xi = s(p), \\
0, & \mbox{otherwise}
\end{cases}
\end{equation}
is evidently a $1$-multisection over $B$, according to Definition \ref{multi:def}. On the other hand, given a $1$-multisection $M$, condition \eqref{multi:eq2} ensures that for every $p \in B$ there exists a unique $\xi \in E_{p}$ such that $M(\xi) > 0$. If such an element $\xi$ is denoted $s(p)$, then the map $p \mapsto s(p)$ defines a classical section of the bundle $E$ over $B$. Hence, $1$-multisections over a subset $B$ are just (possibly rough) sections over $B$ in the classical sense.
\end{remark}

The above Remark justifies the name that was adopted for the objects introduced in Definition \ref{multi:def}: $Q$-multisections are simply the $Q$-valued counterpart of classical sections of a vector bundle. From a different point of view, we may say that $Q$-multisections generalize Almgren's $Q$-valued functions to vector bundle targets. Indeed, $Q$-valued functions defined on a manifold $\Sigma$ might be seen as $Q$-multisections of a trivial bundle over $\Sigma$, as specified in the following remark.

\begin{remark}
Assume $E$ is the trivial bundle of rank $n$ over $\Sigma$, that is $E = \Sigma \times \R^{n}$ and $\Pi$ is the projection on the first factor. Then, to any $Q$-multisection $M$ over $\Sigma$ it is possible to associate the multiple valued function $u_{M} \colon \Sigma \to \A_{Q}(\R^n)$ defined by 
\begin{equation} \label{Qfunct}
u_{M}(p) := \sum_{v \in \R^{n}} M(p,v) \llbracket v \rrbracket.
\end{equation}
Conversely, if $u \colon \Sigma \to \A_{Q}(\R^{n})$ is a multiple valued function then one can define the $Q$-multisection $M_{u}$ induced by $u$ simply setting
\begin{equation} \label{induced_multi}
M_{u}(p,v) := \Theta_{u(p)}(v), 
\end{equation}
where $\Theta_{u(p)}(v)$ is the multiplicity of the vector $v$ in $u(p)$.
\end{remark}

\subsection{Coherent and vertically limited multisections.} 

\begin{definition}[Coherence, Allard {\cite{Allard}}] \label{Coherence}
A $Q$-multisection $M$ of the vector bundle $\Pi \colon E \to \Sigma$ over $\Sigma$ is said to be \emph{coherent} if the following holds. For every $p \in \Sigma$ and for every disjoint family $\mathcal{V}$ of open sets $V \subset E$ such that each member $V \in \mathcal{V}$ contains exactly one element of ${M_{p} := \lbrace \xi \in E_{p} \, \colon \, M(\xi) > 0 \rbrace}$, there is an open neighborhood $U$ of $p$ in $\Sigma$ such that for any $q \in U$
\begin{equation} \label{Coherence:eq}
\sum_{\zeta \in M_{q} \cap V} M(\zeta) = M(\xi) \hspace{1cm} \mbox{if } \xi \in M_{p} \cap V .
\end{equation} 
\end{definition}

The following proposition motivates the necessity of introducing the notion of coherence: it is a way of generalizing the continuity of $Q$-valued functions in the vector bundle-valued context.

\begin{proposition} \label{continuity:prop}
Let $E = \Sigma \times \R^{n}$ be the trivial bundle of rank $n$ over $\Sigma$. Then, a $Q$-multisection $M$ is coherent if and only if the associated multiple valued function ${u_{M} \colon \Sigma \to \A_{Q}(\R^{n})}$ is continuous.
\end{proposition}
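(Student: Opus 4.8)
The plan is to work directly with the explicit description of $u_{M}$. In the trivial bundle $E=\Sigma\times\R^{n}$ the fibre $E_{p}$ is $\{p\}\times\R^{n}$, so $M_{p}=\{p\}\times\spt(u_{M}(p))$, and for $v\in\spt(u_{M}(p))$ one has $M(p,v)=\Theta_{u_{M}(p)}(v)$, the multiplicity of $v$ in $u_{M}(p)$. Hence coherence at $p$ is a statement about how the multiplicities of the sheets of $u_{M}$ near $p$ redistribute themselves among the finitely many distinct support points of $u_{M}(p)$, and I would translate it, in both directions, into the following elementary \emph{stability property of $Q$-points}: if $u_{M}(p)=\sum_{i=1}^{k}m_{i}\llbracket v_{i}\rrbracket$ with the $v_{i}$ pairwise distinct and $0<2\rho<\min_{i\neq j}|v_{i}-v_{j}|$, then $\G(u_{M}(p),T)<\rho$ forces $\spt(T)\subset\bigsqcup_{i=1}^{k}B(v_{i},\rho)$ with exactly $m_{i}$ of the points of $T$ (counted with multiplicity) lying inside $B(v_{i},\rho)$; conversely, if $T$ has this localized form then $\G(u_{M}(p),T)\leq\sqrt{Q}\,\rho$. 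Both halves follow at once by inspecting an optimal matching realizing $\G$ in its definition, and I would record this as a preliminary observation.

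For the implication ``coherence $\Rightarrow$ continuity'', fix $p$ and $\varepsilon>0$ and let $v_{1},\dots,v_{k}$ be the distinct points of $\spt(u_{M}(p))$, with multiplicities $m_{i}$. Choose $r>0$ with $2r<\min_{i\neq j}|v_{i}-v_{j}|$ and $\sqrt{Q}\,r<\varepsilon$: then $V_{i}:=\Sigma\times B(v_{i},r)$, $i=1,\dots,k$, is a disjoint family of open subsets of $E$, each $V_{i}$ containing precisely the element $(p,v_{i})$ of $M_{p}$, and together covering $M_{p}$. Coherence yields an open neighbourhood $U$ of $p$ with $\sum_{\zeta\in M_{q}\cap V_{i}}M(\zeta)=m_{i}$ for every $q\in U$ and every $i$; summing over $i$ and using $\sum_{i}m_{i}=Q=\sum_{\zeta\in M_{q}}M(\zeta)$ together with the disjointness of the $V_{i}$ forces, for $q\in U$, that $\spt(u_{M}(q))\subset\bigsqcup_{i}B(v_{i},r)$ with exactly $m_{i}$ of its points (with multiplicity) in $B(v_{i},r)$. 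By the stability property, $\G(u_{M}(p),u_{M}(q))\leq\sqrt{Q}\,r<\varepsilon$ for all $q\in U$, and since $U$ contains a neighbourhood of $p$ this proves continuity of $u_{M}$ at $p$.

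For ``continuity $\Rightarrow$ coherence'', fix $p$ and a disjoint family $\mathcal{V}$ of open subsets of $E$ whose members contain exactly one element of $M_{p}$ each and together cover $M_{p}$; writing $M_{p}=\{(p,v_{1}),\dots,(p,v_{k})\}$ and $m_{i}:=M(p,v_{i})$, we thus have $\mathcal{V}=\{V_{1},\dots,V_{k}\}$ with $(p,v_{i})\in V_{i}$. For each $i$ choose an open set $U_{i}\ni p$ in $\Sigma$ and $r_{i}>0$ with $U_{i}\times B(v_{i},r_{i})\subset V_{i}$, shrinking the $r_{i}$ so that the balls $B(v_{i},r_{i})$ are pairwise disjoint. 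Put $\rho:=\tfrac{1}{2}\min_{i}r_{i}$, decreasing it if necessary so that also $2\rho<\min_{i\neq j}|v_{i}-v_{j}|$, and set $U_{0}:=\bigcap_{i}U_{i}$. By continuity of $u_{M}$ at $p$ there is a neighbourhood $N$ of $p$ with $\G(u_{M}(p),u_{M}(q))<\rho$ for $q\in N$; let $U:=U_{0}\cap N$. Fix $q\in U$ and an index $i$. The stability property gives $\spt(u_{M}(q))\subset\bigsqcup_{j}B(v_{j},\rho)$ with multiplicity exactly $m_{j}$ inside $B(v_{j},\rho)$. If $(q,v)\in M_{q}$, then $v\in B(v_{j},\rho)\subset B(v_{j},r_{j})$ for a unique $j$, so $(q,v)\in U_{j}\times B(v_{j},r_{j})\subset V_{j}$; since the $V_{j}$ are pairwise disjoint, $(q,v)\in V_{i}$ if and only if $j=i$, i.e.\ if and only if $v\in B(v_{i},\rho)$. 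Therefore $\sum_{\zeta\in M_{q}\cap V_{i}}M(\zeta)$ equals the multiplicity of $u_{M}(q)$ inside $B(v_{i},\rho)$, which is $m_{i}=M((p,v_{i}))$. Since $i$ was arbitrary and $U$ is an open neighbourhood of $p$, this is exactly coherence at $p$.

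The only delicate point is the passage, in the direction ``continuity $\Rightarrow$ coherence'', from the single scalar bound $\G(u_{M}(p),u_{M}(q))<\rho$ to the purely set-theoretic identity ``$M_{q}\cap V_{i}$ has total multiplicity $m_{i}$''. This rests on two things: the stability property, which isolates the $m_{i}$ sheets of $u_{M}(q)$ that cluster near $v_{i}$; and the disjointness of $\mathcal{V}$, which confines the sheets clustering near any other $v_{j}$ to $V_{j}$ and so prevents them from leaking into $V_{i}$. For this last step it is essential that $\mathcal{V}$ contains a member for \emph{each} point of $M_{p}$ (so that every cluster has its own trap); this is why one must read the hypothesis on $\mathcal{V}$ in Definition \ref{Coherence} as requiring $\mathcal{V}$ to cover $M_{p}$, and indeed the equivalence can fail for families that leave some point of $M_{p}$ uncovered. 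Everything else is the bookkeeping of multiplicities and the two elementary estimates for $\G$.
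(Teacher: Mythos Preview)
Your proof is correct, and for the direction ``coherent $\Rightarrow$ continuous'' it is genuinely more elementary than the paper's. The paper establishes that direction by invoking the identification of $\G$ with the $L^{2}$-Wasserstein distance and the equivalence (from \cite{AGS08}) between $\G$-convergence and narrow convergence of measures, and then verifies narrow convergence of $u_{M}(p_{h})$ to $u_{M}(p)$ directly from coherence. Your argument bypasses this machinery entirely: you simply apply coherence to the particular family $V_{i}=\Sigma\times B(v_{i},r)$, note that the multiplicities on both sides sum to $Q$ so that $\spt(u_{M}(q))$ is trapped in the union of the balls with the prescribed counts, and then read off $\G(u_{M}(p),u_{M}(q))\le\sqrt{Q}\,r$ from the elementary stability estimate. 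This is shorter and entirely self-contained. For the direction ``continuous $\Rightarrow$ coherent'' the two arguments are essentially the same, the only difference being that you isolate the stability property as a separate lemma while the paper uses it implicitly.

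Your closing remark about Definition~\ref{Coherence} is well taken: the paper's own proof tacitly makes the same reading, treating $\mathcal{V}$ as indexed by the distinct points of $M_{p}$, and as you observe the equivalence would fail for a family that omits some element of $M_{p}$ (e.g.\ a single $V$ equal to $\Sigma\times(\R^{n}\setminus\{v_{j}\})$). So your interpretation is the only one consistent with the stated proposition.
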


\begin{proof}
Let $u \colon \Sigma \to \A_{Q}(\R^n)$ be a continuous $Q$-valued function, and let $M \colon \Sigma \times \R^{n} \to \N$ be the induced multisection defined by \eqref{induced_multi}. In order to show that $M$ is coherent, fix a point $p$ in the base manifold $\Sigma$, and decompose $u(p) = \sum_{j=1}^{J} m_{j} \llbracket v_{j} \rrbracket$ so that $v_{j} \neq v_{j'}$ when $j \neq j'$ and $m_{j} := M(p,v_j)$. Now, let $\mathcal{V} = \{V_{1}, \dots, V_{J} \}$ be a disjoint family of open sets $V_{j} \subset \R^{n}$ with the property that if $M_{p} := \lbrace v \in \R^n \, \colon \, M(p,v) > 0 \rbrace$ then $M_{p} \cap V_{j} = \{v_{j}\}$. Let $\varepsilon > 0$ be a radius such that $B_{\varepsilon}(v_j) \subset V_j$ for every $j = 1,\dots,J$. Then, since $u$ is continuous, there exists a neighborhood $U$ of $p$ in $\Sigma$ such that 
\[
u(q) \in \B_{\frac{\varepsilon}{2}}(u(p)) := \left\lbrace T \in \A_{Q}(\R^n) \, \colon \, \G(T, u(p)) < \frac{\varepsilon}{2} \right\rbrace, 
\]
for every $q \in U$. From the definition of the metric $\G(\cdot, \cdot)$ in $\A_{Q}(\R^{n})$, it follows naturally that for every $q \in U$ it has to be
\[
\sum_{w \in V_{j}} M(q,w) = m_{j} \hspace{1cm} \mbox{for every } j \in \{1,\dots,J\},
\]
and thus $M$ is coherent.

Conversely, suppose $M$ is a coherent $Q$-multisection of the trivial bundle $\Sigma \times \R^{n}$, and let $u$ be the associated multiple valued funtion as defined in \eqref{Qfunct}. The goal is to prove that $u$ is continuous. Fix any point $p \in \Sigma$, and let $\{ p_{h} \}_{h=1}^{\infty} \subset \Sigma$ be any sequence such that $p_{h} \to p$. Since $M$ is coherent, for any ball $B_{R} \subset \R^{n}$ such that $\spt(u(p)) \subset B_{R}$ there exists $h_{0} \in \N$ such that $\spt(u(p_{h})) \subset B_{R}$ for every $h \geq h_{0}$. In particular, $|u(p_{h})|^{2} := \G(u(p_{h}), Q \llbracket 0 \rrbracket)^{2} \leq Q R^{2}$ for every $h \geq h_{0}$, and thus the measures $\{ u(p_{h}) \}$ have uniformly finite second moment. Therefore, since the metric $\G$ on $\A_{Q}(\R^{n})$ coincides with the $L^{2}$-based Wasserstein distance on the space of positive measures with finite second moment, from \cite[Proposition 7.1.5]{AGS08} immediately follows that $\G(u(p_{h}), u(p)) \to 0$ if and only if the sequence $u(p_{h})$ \emph{narrowly converges} to $u(p)$, that is if and only if
\begin{equation} \label{continuity:1}
\lim_{h \to \infty} \langle u(p_{h}), f \rangle = \langle u(p), f \rangle \hspace{1cm} \forall \, f \in C_{b}(\R^{n}),
\end{equation}
that is, explicitly,
\begin{equation} \label{continuity:2}
\lim_{h \to \infty} \sum_{v \in \R^{n}} M(p_{h}, v) f(v) = \sum_{v \in \R^{n}} M(p, v) f(v) \hspace{1cm} \forall \, f \in C_{b}(\R^{n}),
\end{equation}
where $C_{b}(\R^n)$ denotes the space of bounded continuous functions on $\R^n$. So, in order to prove this, fix $f \in C_{b}(\R^{n})$ and $\varepsilon > 0$. Let $v_{1}, \dots, v_{J}$ be distinct points in $M_{p}$, and let $\eta = \eta(\varepsilon) > 0$ be a number chosen in such a way that
\begin{equation} \label{continuity:3}
|v - v_{j}| < \eta \implies |f(v) - f(v_{j})| < \varepsilon \hspace{1cm} \mbox{for every } j=1,\dots,J.
\end{equation}
Choose now radii $r_{1}, \dots, r_{J}$ such that $r_{j} < \frac{\eta}{2}$, the balls $B_{j} := B_{r_{j}}(v_{j})$ are pairwise disjoint and $M(p,v) = 0$ for any $v \in B_{j} \setminus \{v_{j}\}$. Since $M$ is coherent, in correspondence with the choice of the family $\{B_{j}\}$ there is an open neighborhood $U$ of $p$ in $\Sigma$ with the property that 
\begin{equation} \label{continuity:4}
\sum_{v \in B_{j}} M(q,v) = M(p,v_{j}) \hspace{1cm} \mbox{for every } q \in U.
\end{equation}
Since $\sum_{j=1}^{J} M(p,v_{j}) = Q$, equation \eqref{continuity:4} implies that
\begin{equation} \label{continuity:5}
\sum_{j=1}^{J} \sum_{v \in B_{j}} M(q,v) = Q \hspace{1cm} \mbox{for every } q \in U,
\end{equation}
and thus, whenever $q \in U$, $M(q,v) = 0$ if $v \notin \bigcup_{j=1}^{J} B_{j}$. Therefore, only the balls $B_{j}$ are relevant, namely
\begin{equation} \label{continuity:6}
\sum_{v \in \R^{n}} M(q,v) = \sum_{j=1}^{J} \sum_{v \in B_{j}} M(q,v) \hspace{1cm} \mbox{for every } q \in U.
\end{equation}
We can now finally conclude the validity of \eqref{continuity:2}: Let $N \in \N$ be such that $p_{h} \in U$ for every $h \geq N$ and estimate, for such $h$'s:
\[
\begin{split}
\left| \sum_{v \in \R^{n}} M(p_{h},v) f(v) - \sum_{v \in \R^{n}} M(p,v) f(v) \right| &\overset{\eqref{continuity:6}}{=} \left| \sum_{j=1}^{J} \sum_{v \in B_{j}} M(p_{h},v) f(v) - \sum_{j=1}^{J} M(p,v_{j}) f(v_{j}) \right| \\
&\leq \sum_{j=1}^{J} \left| \sum_{v \in B_{j}} M(p_h, v) f(v) - M(p,v_j) f(v_j) \right| \\
&\overset{\eqref{continuity:4}}{\leq} \sum_{j=1}^{J} \sum_{v \in B_{j}} M(p_{h},v) |f(v) - f(v_{j})| \\
&\overset{\eqref{continuity:5}}{\leq} Q \varepsilon,
\end{split}
\]
which completes the proof.
\end{proof}

The next step will be to define a suitable property of $Q$-multisections that is equivalent to Lipschitz continuity of the associated multiple valued function whenever such an association is possible. We start from a definition in the easy case when the vector bundle $E$ coincides with the trivial bundle $\Omega \times \R^{n}$ over an open subset $\Omega \subset \R^{m}$. 

\begin{definition}[$\tau$-cone condition, Allard {\cite{Allard}}] \label{cone}
Let $\tau > 0$ be a real number. We say that a $Q$-multisection $M \colon \Omega \times \R^{n} \to \N$ satisfies the $\tau$\emph{-cone condition} if the following holds. For any $x \in \Omega$, for any $v \in M_{x} = \{ v \in \R^{n} \, \colon \, M(x,v) > 0 \}$, there exist neighborhoods $U$ of $x$ in $\Omega$ and $V$ of $v$ in $\R^{n}$ such that
\begin{equation} \label{cone:eq}
\{ (y,w) \in U \times V \, \colon \, M(y,w) > 0 \} \subset \K^{\tau}_{x,v},
\end{equation}
where
\begin{equation} \label{cone:def}
\K^{\tau}_{x,v} := \{ (y,w) \in \R^{m} \times \R^{n} \, \colon \, |w - v| \leq \tau |y - x| \}
\end{equation}
is the $\tau$-cone centered at $(x,v)$ in $\R^{m} \times \R^{n}$. 
\end{definition}

\begin{proposition} \label{Lip:prop}
Let $\Omega \subset \R^{m}$ be open and convex. If $u$ is an $\ell$-Lipschitz $Q$-valued function, then the induced multisection $M_{u} \colon \Omega \times \R^{n} \to \N$ is coherent and satisfies a $\tau$-cone condition with $\tau = \ell$. Conversely, if a $Q$-multisection of the bundle $\Omega \times \R^{n}$ is coherent and satisfies the $\tau$-cone condition, then the associated $Q$-valued function $u_{M} \colon \Omega \to \A_{Q}(\R^{n})$ is Lipschitz with $\Lip(u_{M}) \leq \sqrt{Q} \tau$. 
\end{proposition}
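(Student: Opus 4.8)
The statement consists of two implications, which I treat in turn; the second one is the substantial part.

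\emph{First implication.} Assume $u$ is $\ell$-Lipschitz. Since $u_{M_u}=u$ is in particular continuous, Proposition~\ref{continuity:prop} already gives that $M_u$ is coherent, so only the $\ell$-cone condition needs an argument. Fix $x\in\Omega$ and $v\in M_x=\spt(u(x))$, write $u(x)=\sum_{j=1}^{J}m_j\llbracket v_j\rrbracket$ with the $v_j$ pairwise distinct and $v=v_1$, and set $\delta:=\tfrac12\min_{j\ge2}|v_1-v_j|$ (with $\delta:=1$ if $J=1$). Choose $r>0$ with $B_r(x)\subset\Omega$ and $\ell r<\delta$, and put $U:=B_r(x)$, $V:=B_\delta(v_1)$. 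If $(y,w)\in U\times V$ and $M_u(y,w)>0$, then $w\in\spt(u(y))$, and $\G(u(y),u(x))\le\ell|y-x|$ forces $w$ to lie within distance $\ell|y-x|<\delta$ of some $v_j$; since $|v_1-v_j|\le|v_1-w|+|w-v_j|<2\delta$ this is impossible unless $j=1$, whence $|w-v_1|\le\ell|y-x|$, i.e. $(y,w)\in\K^{\ell}_{x,v}$. This verifies \eqref{cone:eq} with $\tau=\ell$.

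\emph{Second implication.} Assume $M$ is coherent and satisfies the $\tau$-cone condition; by Proposition~\ref{continuity:prop} the associated function $u:=u_M$ is continuous. Fix $x_0,x_1\in\Omega$; by convexity the segment $\gamma(t):=(1-t)x_0+tx_1$ ($t\in[0,1]$) lies in $\Omega$, and $t\mapsto u(\gamma(t))$ is a continuous $\A_Q(\R^n)$-valued map. Writing $L:=|x_1-x_0|$, the heart of the matter is the following \emph{local estimate}: for every $t_0\in[0,1]$ there is $\varepsilon(t_0)>0$ such that
\[
\G\bigl(u(\gamma(t)),u(\gamma(t_0))\bigr)\ \le\ \sqrt{Q}\,\tau\,L\,|t-t_0|\qquad\text{whenever }|t-t_0|<\varepsilon(t_0).
\]
To prove it, set $x:=\gamma(t_0)$ and let $M_x=\{v_1,\dots,v_J\}$ with $m_j:=M(x,v_j)$, $\sum_j m_j=Q$. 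Applying the $\tau$-cone condition at $x$ for each $v_j$, and then coherence at $x$ to a disjoint family of product neighborhoods $U^{\ast}\times V^{j}$ with the $V^{j}\ni v_j$ pairwise disjoint (triviality of $E$ lets us take such product sets as the open sets $V$ in Definition~\ref{Coherence}), one gets a neighborhood $U_0$ of $x$ with the property that, for every $q\in U_0$: coherence together with the fact that the total mass is $Q$ yields $\spt(u(q))\subset\bigcup_jV^{j}$ and $\sum_{w\in V^{j}}M(q,w)=m_j$ for each $j$; and the cone condition yields $|w-v_j|\le\tau|q-x|$ for every $w\in\spt(u(q))\cap V^{j}$. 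Pick $\varepsilon(t_0)$ so that $\gamma(t)\in U_0$ for $|t-t_0|<\varepsilon(t_0)$, and put $p:=\gamma(t)$. Listing the points of $\spt(u(p))$ with multiplicity and recording in which $V^{j}$ each of them lies, one may write $u(p)=\sum_{l=1}^{Q}\llbracket a_l\rrbracket$ with $a_l\in V^{j(l)}$ and $\#\{l:j(l)=j\}=m_j$ for every $j$; then $u(x)=\sum_{l=1}^{Q}\llbracket v_{j(l)}\rrbracket$ for the same assignment $l\mapsto j(l)$, and therefore
\[
\G(u(p),u(x))^2\ \le\ \sum_{l=1}^{Q}|a_l-v_{j(l)}|^2\ \le\ Q\,\tau^2\,|p-x|^2\ =\ Q\,\tau^2\,L^2\,|t-t_0|^2 ,
\]
which is the local estimate.

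From the local estimate one globalizes by a continuation argument: for $s<t$ in $[0,1]$, the set $A:=\{\sigma\in[s,t]:\G(u(\gamma(\sigma)),u(\gamma(s)))\le\sqrt{Q}\,\tau L\,(\sigma-s)\}$ is closed (continuity of $u\circ\gamma$ and of $\G$) and contains $s$; if $c:=\max A$ were $<t$, then choosing $\sigma\in\bigl(c,\min\{t,c+\varepsilon(c)\}\bigr)$ and combining the triangle inequality for $\G$ with the local estimate at $c$ and with $c\in A$ would give $\sigma\in A$, contradicting maximality; hence $t\in A$. Taking $s=0$, $t=1$ gives $\G(u(x_0),u(x_1))\le\sqrt{Q}\,\tau\,|x_1-x_0|$, and since $x_0,x_1$ were arbitrary, $\Lip(u_M)\le\sqrt{Q}\,\tau$.

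\emph{Where the difficulty lies.} The only genuinely delicate point is obtaining the \emph{sharp} constant $\sqrt{Q}\,\tau$ in the second implication. The cone condition controls the sheets only radially from a fixed base point, so a naive comparison of $u(p)$ and $u(q)$ through a common base point $x$ produces a bound involving $|p-x|+|q-x|$, which degenerates when $p$ and $q$ lie on the same side of $x$ and is useless for a Lipschitz estimate; one cannot repair this by always taking an endpoint of the subinterval as base point, since the neighborhood on which the cone/coherence structure at a point is effective may shrink with the point in an uncontrolled way. The way out is to keep only the one‑sided instance of the estimate — base point equal to one of the two compared points, where the harmful term vanishes — and to recover the global Lipschitz bound from it via the continuation argument, exploiting the triangle inequality for $\G$. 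The remaining mild bookkeeping issue, namely that coherence is stated with open sets in the total space $E$ while the cone condition uses open sets in the fibre $\R^n$, is harmless in the trivial bundle once product neighborhoods are used.
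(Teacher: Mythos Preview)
Your proof is correct and follows essentially the same architecture as the paper's: the first implication is dispatched via Proposition~\ref{continuity:prop} plus a direct separation argument for the cone condition, and the second implication rests on the identical local estimate $\G(u(y),u(x))\le\sqrt{Q}\,\tau\,|y-x|$ obtained by combining coherence with the cone condition at a fixed base point. The only difference is in the globalization step: the paper extracts a finite subcover of the segment $[p,q]$ by the local neighborhoods and chains the estimates through auxiliary intermediate points, whereas you use a continuation argument via the set $A$; both are standard ways to promote a local Lipschitz bound to a global one along a convex segment, and your version is arguably a touch cleaner.
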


\begin{proof}
One implication is immediate. Indeed, first observe that the continuity of $u$ implies that $M = M_{u}$ is coherent by Proposition \ref{continuity:prop}. Then, fix $x \in \Omega$, and suppose that $u(x) = \sum_{j=1}^{J} m_{j} \llbracket \tilde{v}_{j} \rrbracket$, with the $\tilde{v}_{j}$'s all distinct and $m_{j} := M(x,\tilde{v}_{j})$. Let $\varepsilon > 0$ be such that the balls $B_{\varepsilon}(\tilde{v}_{j}) \subset \R^{n}$ are a disjoint family of open sets such that $M_{x} \cap B_{\varepsilon}(\tilde{v}_{j}) = \{\tilde{v}_{j}\}$. Since $M$ is coherent, there exists an open neighborhood $U$ of $x$ in $\Omega$ such that the following two properties are satisfied for any $y \in U$:
\begin{itemize}
\item[$(i)$] $\sum_{w \in B_{\varepsilon}(\tilde{v}_{j})} M(y,w) = m_{j}$;
\item[$(ii)$] if $u(x) = \sum_{l=1}^{Q} \llbracket v_{l} \rrbracket$ with the first $m_{1}$ of the $v_{l}$'s all identically equal to $\tilde{v}_{1}$, the next $m_{2}$ all identically equal to $\tilde{v}_{2}$ and so on, and if $u(y) = \sum_{l=1}^{Q} \llbracket w_{l} \rrbracket$ with the $w_{l}$ (not necessarily all distinct) ordered in such a way that $\G(u(x), u(y)) = \left( \sum_{l=1}^{Q} |v_{l} - w_{l}|^{2} \right)^{\frac{1}{2}}$, then $w_{l} \in B_{\varepsilon}(v_{l})$ for every $l \in \{1,\dots,Q\}$. 
\end{itemize} 
Thus, for such $y$'s it is evident that the Lipschitz condition $\G(u(y), u(x)) \leq \ell |y - x|$ forces $|w_{l} - v_{l}| \leq \ell |y - x|$ for every $l = 1,\dots,Q$, which is to say that for every $j = 1,\dots,J$
\[
\{ (y,w) \in U \times B_{\varepsilon}(\tilde{v}_{j}) \, \colon \, M(y,w) > 0 \} \subset \K^{\ell}_{x,\tilde{v}_{j}},
\] 
as we wanted.

For the converse, consider a $Q$-multisection $M$, and assume it is coherent and satisfies the $\tau$-cone condition. Define $u \colon \Omega \to \A_{Q}(\R^{n})$ as in \eqref{Qfunct}. We will first prove the following claim, from which the Lipschitz continuity of $u$ will easily follow:

\hspace*{0.5cm} \textbf{Claim.} For every $x \in \Omega$ there exists an open neighborhood $U_{x}$ of $x$ in $\Omega$ such that
\begin{equation} \label{claim}
\G(u(y), u(x)) \leq \sqrt{Q} \tau |y - x| \hspace{1cm} \mbox{for every } y \in U_{x}.
\end{equation} 

In order to show this, fix a point $x \in \Omega$, and let $\{v_{1}, \dots, v_{J} \}$ be distinct vectors in $M_{x}$. Since $M$ satisfies the $\tau$-cone condition, there exist open neighborhoods $U$ of $x$ in $\Omega$ and $V_{j}$ of $v_{j}$ in $\R^{n}$ for every $j = 1,\dots,J$ such that
\begin{equation} \label{Lip:1}
\lbrace (y,w) \in U \times V_{j} \, \colon \, M(y,w) > 0 \rbrace \subset \K^{\tau}_{x,v_{j}} \hspace{1cm} \forall \, j=1,\dots,J.
\end{equation} 
In particular, condition \eqref{Lip:1} implies that $M_{x} \cap V_{j} = \{v_{j}\}$ for every $j$. Up to shrinking the $V_{j}$'s if necessary, we can also assume that they are pairwise disjoint. Hence, since $M$ is also coherent, we can conclude the existence of a (possibly smaller) neighborhood of $x$, which we will still denote $U$, with the property that not only \eqref{Lip:1} is satisfied but also
\begin{equation} \label{Lip:2}
\sum_{w \in V_{j}} M(y,w) = M(x,v_{j}) \hspace{1cm} \forall \, j=1,\dots,J, \, \forall \, y \in U. 
\end{equation}
Therefore, if $y \in U$ we can write
\begin{equation} \label{Lip:3}
u(y) = \sum_{j=1}^{J} \sum_{w \in V_{j}} M(y,w) \llbracket w \rrbracket,
\end{equation}
whereas
\begin{equation} \label{Lip:4}
u(x) = \sum_{j=1}^{J} M(x,v_{j}) \llbracket v_{j} \rrbracket.
\end{equation}
Using \eqref{Lip:2}, \eqref{Lip:3}, \eqref{Lip:4} and the fact that if $(y,w) \in U \times V_{j}$ then $M(y,w) > 0 \implies |w - v_{j}| \leq \tau |y - x|$, we immediately conclude that
\begin{equation} \label{Lip:5}
\G(u(y), u(x))^{2} \leq \left( \sum_{j=1}^{J} M(x,v_{j}) \right) \tau^{2} |y - x|^{2} \hspace{1cm} \mbox{for every } y \in U, 
\end{equation}
which proves our claim.

Next, we prove that $u$ is Lipschitz continuous with $\Lip(u) \leq \sqrt{Q} \tau$. To achieve this, fix two distinct points $p,q \in \Omega$. Since $\Omega$ is convex, the segment $\left[ p, q \right]$ is contained in $\Omega$, and let $e$ denote the unit vector orienting the segment $\left[ p, q \right]$ in the direction from $p$ to $q$. By the claim, for every $x \in \left[ p, q \right]$ there exists a radius $r_{x} > 0$ such that
\begin{equation} \label{Lip:6}
\G(u(y), u(x)) \leq \sqrt{Q} \tau |y - x| \hspace{1cm} \mbox{for every } y \in I_{x} := \left( x - r_{x} e, x + r_{x} e \right).
\end{equation}
The open intervals $I_{x}$ are clearly an open covering of $\left[ p, q \right]$. Since the segment is compact, it admits a finite subcovering, which will be denoted $\{ I_{x_{i}} \}_{i=0}^{N}$. We may assume, refining the subcovering if necessary, that an interval $I_{x_i}$ is not completely contained in an interval $I_{x_j}$ if $i \neq j$. If we relabel the indices of the points $x_{i}$ in a non-decreasing order along the segment, we can now choose an auxiliary point $y_{i,i+1}$ in $I_{x_i} \cap I_{x_{i+1}} \cap \left( x_{i}, x_{i+1} \right)$ for each $i = 0,\dots,N-1$. We can finally conclude:
\begin{equation} \label{Lip:7}
\begin{split}
\G(u(p), u(q)) \leq & \, \G(u(p), u(x_{0})) \\
& + \sum_{i=0}^{N-1} \left( \G(u(x_{i}), u(y_{i,i+1})) + \G(u(y_{i,i+1}), u(x_{i+1})) \right) + \G(u(x_N), u(q)) \\
\overset{\eqref{Lip:6}}{\leq} & \sqrt{Q} \tau \left( |x_{0} - p| + \sum_{i=0}^{N-1} \left( |y_{i,i+1} - x_{i}| + |x_{i+1} - y_{i,i+1}| \right) + |q - x_{N}| \right) \\
= & \sqrt{Q} \tau |q - p|,
\end{split}
\end{equation} 
which completes the proof.
\end{proof}

\begin{definition}[Allard, {\cite{Allard}}] \label{vert_lim_def} 
Let $\Pi \colon E \to \Sigma$ be a vector bundle, $M$ a $Q$-multisection over $\Sigma$ and $\tau > 0$. We say that $M$ is $\tau$-\emph{vertically limited} if for any coordinate domain $\U_{\alpha}$ on $\Sigma$ with associated chart $\psi_{\alpha} \colon \U_{\alpha} \to \R^m$ and trivialization $\Psi_{\alpha} \colon \Pi^{-1}(\U_{\alpha}) \to \U_{\alpha} \times \R^n$ the multisection
\[
M_{\alpha} := M \circ \Psi_{\alpha}^{-1} \circ \left( \psi_{\alpha}^{-1} \times {\rm id}_{\R^n} \right) \colon \psi_{\alpha}(\U_{\alpha}) \times \R^n \to \N
\]
satisfies the $\tau$-cone condition.
\end{definition}

\begin{remark}
Note that the constant $\tau$ in Definition \ref{vert_lim_def} depends on the choice of the family of local trivializations of the vector bundle. 
\end{remark}

\section{Reparametrization of multiple valued graphs} \label{sec:reparam}

In the remaining part of this note, we will apply the theory of $Q$-multisections of a vector bundle in order to derive a more elementary proof of the reparametrization theorem for multiple valued graphs mentioned in the Introduction.

Before stating the precise result we are aiming at, we need to introduce some notation and terminology, which will be used throughout the whole section.
\begin{assumptions} \label{Ass}
Let $m$, $n$ and $Q$ denote fixed positive integers. Let also $0 < s < r <1$. We will consider the following:
\begin{itemize}
\item[$(A1)$] an open $m$-dimensional submanifold $\Sigma$ of the Euclidean space $\R^{m+n}$ with ${\Ha^{m}(\Sigma) < \infty}$ which is the graph of a function $\bphi \colon B_{s} \subset \R^{m} \to \R^{n}$ with $\| \bphi \|_{C^3} \leq \bar{c}$;
\item[$(A2)$] a regular tubular neighborhood ${\bf U}$ of $\Sigma$, that is the set of points
\begin{equation} \label{tub_neigh}
{\bf U} := \lbrace \xi = p + {\rm v} \, \colon \, p \in \Sigma, \, {\rm v} \in T_{p}^{\perp}\Sigma, \, |{\rm v}| < c_{0} \rbrace,
\end{equation}
where the thickness $c_{0}$ is small enough to guarantee that the nearest point projection $\Pi \colon {\bf U} \to \Sigma$ is well defined and $C^2$;
\item[$(A3)$] a proper Lipschitz $Q$-valued function $f \colon B_{r} \subset \R^{m} \to \A_{Q}(\R^n)$.
\end{itemize}
\end{assumptions}
Some comments about the objects introduced in Assumptions \ref{Ass} are now in order. First observe that the map $\bphi$ induces a parametrization of the manifold $\Sigma$, which we denote by
\begin{equation} \label{parametrization}
{\bf \Phi} \colon x \in B_{s} \subset \R^{m} \mapsto {\bf \Phi}(x) := (x, \bphi(x)) \in \R^{m+n}.
\end{equation}
The inverse of ${\bf \Phi}$ can be used as a global chart on $\Sigma$. If $p \in \Sigma$, then $\pi_{p}$ and $\varkappa_{p}$ will denote the tangent space $T_{p}\Sigma$ and its orthogonal complement in $\R^{m+n}$ respectively. The symbols $\pi_{0}$ and $\pi_{0}^{\perp}$, instead, will be reserved for the planes $\R^{m} \times \{0\} \simeq \R^{m}$ and $\{0\} \times \R^{n} \simeq \R^{n}$ respectively. In general, if $\pi$ is a linear subspace of $\R^{m+n}$, the symbol $\p_{\pi}$ will denote orthogonal projection onto it.   

Concerning the tubular neighborhood ${\bf U}$, we will denote by $\lbrace \nu_{1}, \dots, \nu_{n} \rbrace$ the standard orthonormal frame of the normal bundle of $\Sigma$ described in \cite[Appendix A]{DLS13a}. Such a frame is simply obtained by applying, at every point $p \in \Sigma$, the Gram-Schmidt orthogonalization algorithm to the vectors $\p_{\varkappa_p}(e_{m+1}), \dots, \p_{\varkappa_p}(e_{m+n})$, where $\lbrace e_{m+1}, \dots, e_{m+n} \rbrace$ is the standard orthonormal basis of $\{0\} \times \R^{n} \subset \R^{m+n}$. The analytic properties of the frame $\nu_{1}, \dots \nu_{n}$ are recorded in the following lemma.

\begin{lemma}[cf. {\cite[Lemma A.1]{DLS13a}}] \label{trivialization}
If $\| D \bphi \|_{C^0}$ is smaller than a geometric constant, then $\nu_{1}, \dots \nu_{n}$ is an orthonormal frame spanning $\varkappa_{p}$ at every $p \in \Sigma$. Consider $\nu_{i}$ as functions of $x \in B_{s}$ using the inverse of ${\bf \Phi}$ as a chart. For every $\gamma + k \geq 0$, there is a constant $C = C(m,n,\gamma,k)$ such that if $\| \bphi \|_{C^{k+2, \gamma}} \leq 1$, then $\| D \nu_{i} \|_{C^{k,\gamma}} \leq C \| D \bphi \|_{C^{k+1,\gamma}}$. 
\end{lemma}

Recall that, for any $Q$-valued function $f$ as in assumption $(A3)$, ${\rm Gr}(f)$ and ${\bf G}_{f}$ denote the set-theoretical graph of $f$ and the integral $m$-current associated to it respectively. The concept of \emph{reparametrization} of $f$ is introduced next.

\begin{definition} \label{reparametrization:def}
Given $\Sigma$, ${\bf U}$ and $f$ as in Assumptions \ref{Ass}, we call a \emph{Lipschitz normal reparametrization} of the $Q$-function $f$ in the tubular neighborhood ${\bf U}$ any $Q$-valued function $F \colon \Sigma \to \A_{Q}(\R^{m+n})$ such that the following conditions are satisfied:
\begin{itemize}
\item[$(i)$] for every $p \in \Sigma$, $F(p) = \sum_{l=1}^{Q} \llbracket p + N_{l}(p) \rrbracket$, with $N \colon \Sigma \to \A_{Q}(\R^{m+n})$ a Lipschitz continuous $Q$-valued function;
\item[$(ii)$] $p + N_{l}(p) \in {\bf U}$ and $N_{l}(p) \in \varkappa_{p} = T_{p}^{\perp}\Sigma$ for every $l \in \{1,\dots,Q\}$, for every $p \in \Sigma$;
\item[$(iii)$] ${\bf T}_{F} = {\bf G}_{f} \mres {\bf U}$.
\end{itemize} 
\end{definition}

We are now ready to state the main theorem of this section.

\begin{samepage}
\begin{theorem}[Existence of the reparametrization] \label{reparametrization:thm}
Let $Q$, $m$ and $n$ be positive integers, and $0 < s < r < 1$. Then, there are constants $c_{0}, C > 0$ (depending on $m$, $n$, $Q$, $r - s$ and $\frac{r}{s}$) with the following property. For any $\bphi$, $\Sigma$, ${\bf U}$ and $f$ as in Assumptions \ref{Ass} such that
\begin{equation} \label{reparametrization:hp}
\| \bphi \|_{C^{2}} + \Lip(f) \leq c_{0}, \hspace{0.5cm} \| \bphi \|_{C^{0}} + \| f \|_{C^{0}} \leq c_{0} s,
\end{equation}
there exists a Lipschitz normal reparametrization $F$ of the $Q$-valued function $f$ in ${\bf U}$. Furthermore, the associated normal multi-valued vector field $N$ satisfies:
\begin{equation} \label{reparametrization:th1}
\Lip(N) \leq C \left( \| N \|_{C^0} \| D^{2}\bphi \|_{C^0} + \| D\bphi \|_{C^0} + \Lip(f) \right),
\end{equation}
\begin{equation} \label{reparametrization:th2}
\frac{1}{2\sqrt{Q}} |N({\bf \Phi}(x))| \leq \G(f(x), Q \llbracket \bphi(x) \rrbracket) \leq 2 \sqrt{Q} |N({\bf \Phi}(x))| \hspace{0.2cm} \forall \, x \in B_{s},
\end{equation}
\begin{equation} \label{reparametrization:th3}
|\bfeta \circ N({\bf \Phi}(x))| \leq C |\bfeta \circ f (x) - \bphi(x)| + C \Lip(f) |D\bphi(x)| |N({\bf \Phi}(x))| \hspace{0.2cm} \forall \, x \in B_s.
\end{equation}
Finally, assume $x \in B_s$ and $\left( x, \bfeta \circ f(x) \right) = p + {\rm v}$ for some $p \in \Sigma$ and ${\rm v} \in T_{p}^{\perp}\Sigma$. Then,
\begin{equation} \label{reparametrization:th4}
\G(N(p), Q \llbracket {\rm v} \rrbracket) \leq 2 \sqrt{Q} \G(f(x), Q \llbracket \bfeta \circ f(x) \rrbracket).
\end{equation}
\end{theorem}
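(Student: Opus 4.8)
The plan is to realize the current ${\bf G}_f \mres {\bf U}$ as the push-forward ${\bf T}_F$ of $\llbracket \Sigma \rrbracket$ through a suitable coherent and vertically limited $Q$-multisection of the normal bundle of $\Sigma$, and then to extract the Lipschitz bound and the remaining estimates from Propositions \ref{continuity:prop}, \ref{Lip:prop} and elementary geometry. First I would identify ${\bf U}$ with an open subset of the total space $E$ of the normal bundle $\varkappa\to\Sigma$ via $(p,{\rm v})\mapsto p+{\rm v}$, use ${\bf \Phi}^{-1}\colon \Sigma\to B_s$ as a global chart and the orthonormal frame $\nu_1,\dots,\nu_n$ of Lemma \ref{trivialization} as a global trivialization, so that the transition data are $C^1$ with norms controlled by $\|\bphi\|_{C^2}$. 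On $E$ I would define
\[
M(p,{\rm v}) := \Theta_{f(\p_{\pi_0}(p+{\rm v}))}\bigl(\p_{\pi_0^\perp}(p+{\rm v})\bigr),
\]
the multiplicity with which the point $p+{\rm v}$ occurs in ${\rm Gr}(f)$, setting $M=0$ when $p+{\rm v}\notin{\bf U}$ or $\p_{\pi_0}(p+{\rm v})\notin B_r$. Choosing $c_0$ small in terms of $r-s$ and $r/s$, the hypothesis \eqref{reparametrization:hp} will guarantee simultaneously that the portion of ${\rm Gr}(f)$ over $B_s$ lies well inside ${\bf U}$ and that the portion of ${\rm Gr}(f)$ over $B_r$ is ``wide enough'' to meet every normal fibre $\Pi^{-1}(p)\cap{\bf U}$, $p\in\Sigma$.

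The crucial point, and the step I expect to be the main obstacle, is to prove the \emph{Key Lemma}: $M$ is a $Q$-multisection over $\Sigma$, i.e.\ $\sum_{{\rm v}} M(p,{\rm v})=Q$ for every $p\in\Sigma$. This is exactly the delicate fibrewise count which in \cite{DLS13a} necessitated the machinery of currents in metric spaces, and here it must instead be extracted from the elementary tools of Section \ref{sec:pf}. I would fix $p\in\Sigma$, parametrize the fibre $\Pi^{-1}(p)\cap{\bf U}$ as a graph $\{(\gamma_p(w),w)\colon w\in D\}$ over an $n$-disc $D\subset\R^n$ of radius comparable to $c_0$, with $\Lip(\gamma_p)\le C\|D\bphi\|_{C^0}$; then $\sum_{{\rm v}} M(p,{\rm v})$ equals the number, counted with multiplicity, of zeros in $D$ of the Lipschitz $Q$-valued map $h_p(w):=\sum_{l=1}^Q\llbracket f_l(\gamma_p(w))-w\rrbracket$. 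Since $\|f\|_{C^0}+\|\bphi\|_{C^0}\le c_0 s$ is much smaller than the radius of $D$, $h_p$ does not vanish near $\partial D$; deforming $f$ to $Q\llbracket 0\rrbracket$ through $h_p^t(w):=\sum_l\llbracket (1-t)f_l(\gamma_p(w))-w\rrbracket$, which stays non-vanishing on $\partial D$ for all $t\in[0,1]$, and using $\partial {\bf G}_{h_p^t}={\bf G}_{h_p^t|_{\partial D}}$ (Theorem \ref{pf_bdry:thm}) together with the homotopy formula \eqref{homotopy} and the mass estimate \eqref{homotopy_mass}, I would show that the multiplicity of ${\bf G}_{h_p^t}$ over $0$ is independent of $t$, hence equal to its value at $t=1$, where $h_p^1(w)=Q\llbracket -w\rrbracket$ has the single zero $w=0$ of multiplicity $Q$.

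Granting the Key Lemma, I would check that $M$ is coherent and $\tau$-vertically limited. Coherence \eqref{Coherence:eq} follows from the joint continuity of $f$, $\Pi$ and $\nu_1,\dots,\nu_n$ together with the local constancy of the fibrewise multiplicity just established: a small displacement of the base point merely redistributes multiplicity inside each member of a disjoint family of neighbourhoods of the points of $M_p$, without creating or destroying any. The $\tau$-cone condition \eqref{cone:eq} in the $\nu$-trivialization is the quantitative heart: writing a graph point over base point $y$ as ${\bf \Phi}(y)+\sum_i z^i\nu_i({\bf \Phi}(y))$, membership in ${\rm Gr}(f)$ reads ``$\p_{\pi_0^\perp}$ of this point lies in $\spt$ of $f$ at $\p_{\pi_0}$ of this point''; differentiating along a branch and using that $[\p_{\pi_0^\perp}\nu_i]=I+O(\|D\bphi\|_{C^0})$ is invertible, $[\p_{\pi_0}\nu_i]=O(\|D\bphi\|_{C^0})$, $\|Df\|=O(\Lip(f))$ and $\|D\nu_i\|_{C^0}=O(\|\bphi\|_{C^2})$ (Lemma \ref{trivialization}) gives, near any $(y_0,z_0)$ with $M>0$, the cone estimate with $\tau\le C\bigl(|z_0|\,\|D^2\bphi\|_{C^0}+\|D\bphi\|_{C^0}+\Lip(f)\bigr)$. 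Propositions \ref{continuity:prop} and \ref{Lip:prop}, applied in this trivialization, then produce a Lipschitz $N\colon\Sigma\to\A_Q(\R^{m+n})$ with $N_l(p)\in\varkappa_p$, $p+N_l(p)\in{\bf U}$, and $\Lip(N)\le C\tau$, which is \eqref{reparametrization:th1}; setting $F(p):=\sum_l\llbracket p+N_l(p)\rrbracket$ yields conditions $(i)$ and $(ii)$ of Definition \ref{reparametrization:def}.

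For condition $(iii)$, ${\bf T}_F={\bf G}_f\mres{\bf U}$: both currents are integral and carried by the rectifiable set ${\rm Gr}(f)\cap{\bf U}$, they have the same multiplicity by the construction of $M$, and the same orientation, since on each sheet both $\p_{\pi_0}\colon{\rm Gr}(f)\cap{\bf U}\to\R^m$ and $\Pi\colon{\rm Gr}(f)\cap{\bf U}\to\Sigma$ are, under \eqref{reparametrization:hp}, orientation-preserving bi-Lipschitz homeomorphisms onto their images — alternatively one evaluates both currents on forms via the representation in Proposition \ref{Q_pf:repr}. The estimates \eqref{reparametrization:th2}--\eqref{reparametrization:th4} are then elementary geometry of the nearest-point projection: $p+N_l(p)$ is the $\Pi$-lift of a point of ${\rm Gr}(f)$, the plane $\varkappa_p$ differs from $\pi_0^\perp$ by $O(\|D\bphi\|_{C^0})$, and $\Pi$ is a near-orthogonal projection, so $|N({\bf \Phi}(x))|$ and $\G(f(x),Q\llbracket\bphi(x)\rrbracket)$ agree up to the factor $\sqrt Q$ and a tilting error absorbed in the constants, giving \eqref{reparametrization:th2}; averaging over the $Q$ sheets and controlling $\bfeta\circ N$ by the displacement of the average sheet gives \eqref{reparametrization:th3}; and taking $p$ to be the foot of the perpendicular dropped from $(x,\bfeta\circ f(x))$ onto $\Sigma$ gives \eqref{reparametrization:th4} directly from the definition of $\G$.
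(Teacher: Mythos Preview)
Your overall strategy coincides with the paper's: define $M$ by reading off graph multiplicities along normal fibres, prove it is a $Q$-multisection, verify coherence and a $\tau$-cone bound with $\tau$ as in \eqref{vl:eq}, then invoke Propositions \ref{continuity:prop}--\ref{Lip:prop} and finish with pointwise geometry. The substantive difference lies in the Key Lemma. The paper argues globally: it applies the constancy theorem to $\Pi_\sharp({\bf G}_f\mres\Pi^{-1}(\Sigma))$, identifies the constant as $Q$ via the homotopy $\bphi_t=t\bphi$, and then uses the slices $\langle{\bf G}_f,\Pi,p\rangle$ to pass to individual fibres, where an explicit orientation computation (each slice coefficient is positive because, under \eqref{reparametrization:hp}, the tangent $m$-vector to ${\rm Gr}(f)$ and the normal $n$-vector to the fibre are close to the standard basis vectors) rules out cancellation. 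Your fibrewise degree argument is a legitimate alternative and is morally the same computation done one fibre at a time, but as written it has a gap: the quantity your homotopy keeps constant is the \emph{algebraic} degree of $h_p$ at $0$, whereas $\sum_{\rm v} M(p,{\rm v})$ is the \emph{absolute} count of intersections. You must still check that every intersection of the normal fibre with a sheet of ${\rm Gr}(f)$ contributes with sign $+1$ --- precisely the orientation step the paper carries out and which you have omitted.

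Two smaller points. Your coherence argument (``redistributes multiplicity without creating or destroying any'') is not self-evident: knowing that the total over $\bigcup_j V_j$ stays equal to $Q$ does not by itself pin down the count in each $V_j$ separately. The paper re-applies the constancy theorem to each $\Pi_\sharp({\bf G}_f\mres V_j)$; you could equivalently rerun your fibrewise degree argument localized to $V_j$, but some argument is needed. Finally, your sketch of \eqref{reparametrization:th3} is too optimistic: the paper's proof (reproduced from \cite{DLS13a}) requires a preliminary clustering of the points of $f(x)$ into well-separated groups of diameter $O(\Lip(f)\,|D\bphi(x)|\,|N({\bf\Phi}(x))|)$ before averaging, and a corresponding splitting of $N({\bf\Phi}(x))$; mere tilting estimates between $\varkappa_p$ and $\pi_0^\perp$ do not produce the second term on the right of \eqref{reparametrization:th3}.
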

\end{samepage}

As already mentioned in the Introduction, Theorem \ref{reparametrization:thm} was already proved in the same form by C. De Lellis and E. Spadaro in \cite[Theorem 5.1]{DLS13a}, and it plays an important role in the approximation of an area minimizing current at an interior branch point of density $Q$ with the graph of an almost ${\rm Dir}$-minimizing $Q$-valued function (cf. \cite[Theorem 2.4]{DLS13b}), which is the key for deducing the celebrated partial regularity result for area minimizing currents in codimension higher than one \cite[Theorem 0.3]{DLS13c}.

%The proof presented in \cite{DLS13a} relies on the Ambrosio-Kirchheim approach to the theory of currents in metric spaces \cite{AK00}. The argument we suggest here, instead, is completely given in terms of $Q$-multisections, and thus it only requires the concepts introduced in the previous Section \ref{sec:sections} besides classical tools in Geometric Measure Theory. In turn, this new approach will also serve as an example of the fact that some a-priori elementary geometric concepts, such as the coherence and the cone condition previously discussed, may turn out to be extremely powerful in proving deep analytical results.

The argument will be divided into two parts: in the first part, we will suppose to be given $\Sigma$, ${\bf U}$ and $f$ as in Assumptions \ref{Ass}, and we will associate in an extremely natural way to the $Q$-valued function $f$ a $Q$-multisection $M$ of the tubular neighborhood ${\bf U}$, regarded as (the diffeomorphic image of) an open subset of a rank $n$ vector bundle of class $C^2$ over $\Sigma$. Under suitable smallness assumptions on the universal constant $c_{0}$ which controls the relevant norms of the functions $\bphi$ and $f$ as in \eqref{reparametrization:hp}, we will be able to show that the multisection $M$ so defined enjoys good properties of coherence and vertical boundedness. In the second part of the argument, we will produce the reparametrization $F$ using the multisection $M$ previously analyzed, and we will prove that the aforementioned geometric properties of $M$ do suffice to conclude the proof of Theorem \ref{reparametrization:thm}, using techniques that have been already introduced in the proofs of Propositions \ref{continuity:prop} and \ref{Lip:prop}.

We start with the first part of our program. Assume, therefore, that the manifold $\Sigma$, the tubular neighborhood ${\bf U}$ and the $Q$-valued function $f$ are given, and that the functions $\bphi$ and $f$ satisfy the bounds in \eqref{reparametrization:hp}. Suitable restrictions on the size of the constant $c_{0}$ will appear throughout the argument. Let
\begin{equation} \label{multi_f}
M_{f} \colon \R^{m} \times \R^{n} \to \N
\end{equation}
be the $Q$-multisection over $B_{r}$ associated to $f$. Observe that, setting $\ell := \Lip(f)$, Proposition \ref{Lip:prop} guarantees that $M_{f}$ is coherent and satisfies an $\ell$-cone condition. 

Now, we define a $Q$-multisection $M$ of the tubular neighborhood ${\bf U}$ as follows: for any $\xi \in {\bf U}$, $M(\xi)$ coincides with the multiplicity of the ``vertical coordinate'' $\p_{\pi_{0}^{\perp}}(\xi)$ in $f(\p_{\pi_{0}}(\xi))$. In symbols, we set:
\begin{equation} \label{reparametrized_multi}
M(\xi) := \Theta_{f(\p_{\pi_0}(\xi))}(\p_{\pi_{0}^{\perp}}(\xi)) = M_{f}(\p_{\pi_0}(\xi), \p_{\pi_0^\perp}(\xi)), \hspace{0.5cm} \mbox{for every } \xi \in {\bf U}.  
\end{equation}

The following Proposition shows that, under suitable smallness conditions on $c_{0}$, $M$ is indeed a coherent $Q$-multisection over the base manifold $\Sigma$.

\begin{proposition} \label{well-posedness}
If $c_{0}$ is small enough, depending on $m$, $n$, $r - s$ and $\frac{r}{s}$, then the identity
\begin{equation} \label{well-posedness:eq}
\sum_{\xi \in \Pi^{-1}(\{p\})} M(\xi) = Q
\end{equation}
holds for every $p \in \Sigma$, and thus $M$ is a $Q$-multisection over $\Sigma$. Moreover, $M$ is coherent.
\end{proposition}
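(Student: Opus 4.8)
\smallskip
\noindent\textit{Proof strategy.} The plan is to realize the fibers of $\Pi$ as graphs of an affine family of maps, to observe that $M$ records how $f$ gets ``sliced'' by this family, and then to prove both assertions by an analogue of the contraction mapping principle for $Q$-valued maps, mimicking the proofs of Propositions \ref{continuity:prop} and \ref{Lip:prop}.

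\textbf{Step 1 (slicing along the normal fibers).} Fix $p = {\bf \Phi}(y) \in \Sigma$, $y \in B_{s}$. Identifying $\R^{m+n} = \pi_{0} \times \pi_{0}^{\perp} = \R^{m} \times \R^{n}$, formula \eqref{reparametrized_multi} simply says $M = M_{f}|_{{\bf U}}$, so the sum in \eqref{well-posedness:eq} equals $\sum \Theta_{f(x)}(w)$ over those $(x,w) \in {\rm Gr}(f)$ lying in the fiber $\Pi^{-1}(p) \cap {\bf U} = (p + \varkappa_{p}) \cap {\bf U}$. As $\pi_{p} = T_{p}\Sigma$ is spanned by the columns $\partial_{i}{\bf \Phi}(y) = (e_{i}, \partial_{i}\bphi(y))$, the condition $(x,w) - p \in \varkappa_{p}$ becomes $x = \Xi_{y}(w) := y - (D\bphi(y))^{T}(w - \bphi(y))$, an affine map with $\|D\Xi_{y}\| = \|D\bphi(y)\| \leq c_{0}$. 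Hence, with $g_{y} := f \circ \Xi_{y}$, the left side of \eqref{well-posedness:eq} is the number of solutions of $w \in \spt(g_{y}(w))$, each counted with multiplicity $\Theta_{g_{y}(w)}(w)$. Using $\|\bphi\|_{C^{0}} + \|f\|_{C^{0}} \leq c_{0}s$ I would check that, for $c_{0}$ small (depending on $r-s$ and $\tfrac{r}{s}$), $g_{y}$ is a well-defined Lipschitz $Q$-valued map on $\overline{B}_{\rho} \subset \R^{n}$ with $\rho := 2c_{0}s$, that $\Lip(g_{y}) \leq \|D\bphi\|_{C^{0}}\Lip(f) \leq c_{0}^{2} < 1$, and that every value and every fixed point of $g_{y}$ lies in $B_{\rho/2}$ (a fixed point being by definition a value of $f$). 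So \eqref{well-posedness:eq} reduces to: a $Q$-valued contraction has exactly $Q$ fixed points, counted with multiplicity.

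\textbf{Step 2 (a $Q$-valued contraction principle).} For $t \in [0,1]$ I would consider $h_{t} \colon \overline{B}_{\rho} \to \A_{Q}(\R^{n})$, $h_{t}(w) := \sum_{l=1}^{Q} \llbracket w - t\,(g_{y})_{l}(w) \rrbracket$ (well defined independently of the local selection $g_{y} = \sum_{l}\llbracket(g_{y})_{l}\rrbracket$, since the terms $w$ do not depend on $l$). Its push-forward ${\bf T}_{h_{t}} = (h_{t})_{\sharp}\llbracket\overline{B}_{\rho}\rrbracket$ is an integer rectifiable $n$-current in $\R^{n}$. Applying the push-forward operator to the Lipschitz map $(t,w) \mapsto h_{t}(w)$ on the Lipschitz manifold with boundary $[0,1]\times\overline{B}_{\rho}$ yields an $(n+1)$-current in $\R^{n}$, which is necessarily $0$; hence by Theorem \ref{pf_bdry:thm}, ${\bf T}_{h_{1}} - {\bf T}_{h_{0}}$ equals, up to sign, the push-forward of the lateral boundary $[0,1]\times\partial B_{\rho}$, whose image $\{w - t(g_{y})_{l}(w) \colon |w| = \rho,\, t\in[0,1]\}$ avoids $B_{\rho/2}$ because $|w - t(g_{y})_{l}(w)| \geq \rho - |(g_{y})_{l}(w)| \geq \rho - c_{0}s = \rho/2$. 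As ${\bf T}_{h_{0}} = Q\llbracket\overline{B}_{\rho}\rrbracket$, this gives ${\bf T}_{h_{1}} = Q\llbracket\overline{B}_{\rho}\rrbracket$ on $B_{\rho/2}$. On the other hand, by Proposition \ref{Q_pf:repr} and since each sheet of $h_{1}$ has Jacobian $\det({\rm Id} - D(g_{y})_{l}) \geq (1 - c_{0}^{2})^{n} > 0$, the density of ${\bf T}_{h_{1}}$ at any point equals the number of preimages counted with multiplicity; evaluating at $0 \in B_{\rho/2}$ (where ${\bf T}_{h_1}$ is a genuine multiple of $\llbracket \R^n\rrbracket$) yields $\sum_{w}\Theta_{g_{y}(w)}(w) = Q$, which is \eqref{well-posedness:eq}.

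\textbf{Step 3 (coherence).} Since coherence is invariant under the $C^{1}$ bundle trivialization determined by the chart ${\bf \Phi}^{-1}$ and the (by Lemma \ref{trivialization}, $C^{2}$) orthonormal normal frame $\{\nu_{i}\}$, it suffices to prove that the induced $Q$-multisection $\tilde{M}$ of the trivial bundle is coherent, i.e., by Proposition \ref{continuity:prop}, that $y \mapsto u_{\tilde{M}}(y)$ is continuous. By Step 1, $u_{\tilde{M}}(y)$ is the image of ${\rm Fix}(g_{y}) := \sum_{w \in \spt g_{y}(w)} \Theta_{g_{y}(w)}(w)\llbracket w\rrbracket \in \A_{Q}(\R^{n})$ (a $Q$-point, by Step 2) under an affine isomorphism depending continuously on $y$, so it is enough to show $y \mapsto {\rm Fix}(g_{y})$ is continuous. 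Fixing $y_{0}$ and writing ${\rm Fix}(g_{y_{0}}) = \sum_{k} m_{k}\llbracket w^{(k)}\rrbracket$ with distinct $w^{(k)}$, I would pick $\delta > 0$ with the $\overline{B}_{\delta}(w^{(k)}) \subset B_{\rho/2}$ pairwise disjoint; since $g_{y} = f\circ\Xi_{y} \to g_{y_{0}}$ uniformly on $\overline{B}_{\rho}$ as $y\to y_{0}$, for $y$ close to $y_{0}$ the map $w \mapsto \sum_{l}\llbracket w - (g_{y,t})_{l}(w)\rrbracket$ does not vanish on $\bigcup_{k}\partial B_{\delta}(w^{(k)})$ along the affine homotopy $\Xi_{y,t} := (1-t)\Xi_{y} + t\Xi_{y_{0}}$, $g_{y,t} := f\circ\Xi_{y,t}$. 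Running Step 2 localized on each $\overline{B}_{\delta}(w^{(k)})$ then gives that $g_{y}$ has exactly $m_{k}$ fixed points (with multiplicity) in $B_{\delta}(w^{(k)})$; since $\sum_{k} m_{k} = Q$ is the total count from Step 2, there are no others, whence $\G({\rm Fix}(g_{y}), {\rm Fix}(g_{y_{0}})) \leq \sqrt{Q}\,\delta$ for $y$ close to $y_{0}$. Letting $\delta \downarrow 0$ gives continuity, hence coherence of $M$.

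\textbf{Main difficulty.} The delicate point is Step 2: making rigorous the principle ``a $Q$-valued contraction has exactly $Q$ fixed points, stably'' when $g_{y}$ need not admit a \emph{global} Lipschitz selection, which forces one to read off the count from the push-forward current ${\bf T}_{h_{1}}$ rather than sheet by sheet (alternatively, by an induction on $Q$ via the Lipschitz decomposition of Proposition \ref{Lip_decomp}, where the case in which $g_{y}$ violates the separation hypothesis must be handled by shrinking the ball). Everything else is a routine combination of these ideas with the elementary geometry of the nearest-point projection.
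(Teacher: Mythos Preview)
Your approach is genuinely different from the paper's, and the idea is elegant: parametrize the normal fiber as the graph of an affine map $\Xi_y$, reduce \eqref{well-posedness:eq} to a ``$Q$-valued Banach fixed point theorem'' for the contraction $g_y=f\circ\Xi_y$, and prove the latter by a degree/homotopy argument on the push-forward ${\bf T}_{h_t}$. The paper instead uses the constancy theorem to get $\Pi_\sharp({\bf G}_f\mres\Pi^{-1}(\Sigma))=Q\llbracket\Sigma\rrbracket$, then slicing theory to identify $\langle{\bf G}_f,\Pi,p\rangle$ for $\Ha^m$-a.e.\ $p$, and finally a local constancy argument to prove coherence, from which the ``a.e.'' is upgraded to ``everywhere''.

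There is, however, a real gap in your Step~2. The homotopy argument correctly gives ${\bf T}_{h_1}\mres B_{\rho/2}=Q\llbracket B_{\rho/2}\rrbracket$ \emph{as currents}. Because all Jacobians are positive, the representation of Proposition~\ref{Q_pf:repr} says that the density of ${\bf T}_{h_1}$ at $z$ equals the preimage count $N(z):=\sum_w\Theta_{g_y(w)}(w-z)$ --- but only for $\Ha^n$-a.e.\ $z$. Your sentence ``evaluating at $0\in B_{\rho/2}$ \dots\ yields $\sum_w\Theta_{g_y(w)}(w)=Q$'' conflates the density of a current (an a.e.\ defined function) with the pointwise preimage count at a specific point. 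The paper meets the very same difficulty (its slicing step is also only a.e.) and resolves it in a different order: it proves coherence \emph{first}, by a local constancy-theorem argument that does not require knowing $\sum M=Q$ at the base point, and then observes that coherence forces the exceptional set $\tilde Z$ to be open, hence empty. Your Step~3, by contrast, \emph{presupposes} Step~2 (to even define ${\rm Fix}(g_y)$ as a $Q$-point), so the logic as written is circular.

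The inductive alternative you mention is the right way to close the gap within your framework, but it needs more than one line. Using coherence of $M_f$ (continuity of $f$), around each fixed point $w^{(k)}$ with multiplicity $m_k$ the map $g_y$ splits locally as an $m_k$-valued piece (values near $w^{(k)}$) plus a $(Q-m_k)$-valued piece (values far away); only the first piece carries fixed points in a small ball, and if $m_k<Q$ the inductive hypothesis gives exactly $m_k$ of them there. The remaining case $m_k=Q$ means $g_y(w^{(k)})=Q\llbracket w^{(k)}\rrbracket$, and then the easy pigeonhole $\G(g_y(w),g_y(w'))\le\lambda|w-w'|$ forbids a second fixed point, so $N(0)=Q$. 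Combined with your a.e.\ statement this shows $N$ is locally constant and $\equiv Q$. Once this is in place, your Step~3 goes through. (Two small side remarks: in Step~1 you should also check that the points $(\Xi_y(w),w)$ with $w$ a fixed point actually lie in ${\bf U}$, i.e.\ $|\xi-p|<c_0$ --- this is the content of the paper's estimate \eqref{wp:9} and needs the same kind of absorption argument; and your specific choice $\rho=2c_0s$ may need a minor adjustment of constants.)
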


\begin{proof}
First, we claim the following: the current $T := {\bf G}_{f} \mres {\bf U}$ satisfies $\Pi_{\sharp}T = Q \llbracket \Sigma \rrbracket$. In order to see this, fix a point $\xi \in \spt(T)$. By definition, $\xi = \left( y, f_{l}(y) \right)$ for some $y \in B_{r}$ and for some $l \in \{1,\dots,Q\}$; furthermore, there exist a point $p = (x, \bphi(x)) \in \Sigma$ and a vector ${\rm v} \in T_{p}^{\perp}\Sigma$ with $|{\rm v}| < c_{0}$ such that $\xi = p + {\rm v}$. Hence, we can easily estimate
\[
|y| = |\p_{\pi_{0}}(p + {\rm v})| \leq |x| + |{\rm v}| < s + c_{0}.
\]
This implies that if we choose $c_{0}$ suitably small, say
\begin{equation} \label{condition:1}
c_{0} \leq \frac{1}{2}(r - s),
\end{equation}
then the current $\left( \p_{\pi_{0}} \right)_{\sharp} T$ is compactly supported in $B_{r}$, and thus $(\partial {\bf G}_{f}) \mres {\bf U} = ({\bf G}_{f|_{\partial B_{r}}}) \mres {\bf U} = 0$. Now, we estimate more carefully the quantity $|{\rm v}| = |\xi - p| = \dist(\xi, \Sigma)$. Decompose 
\begin{equation} \label{wp:1}
|{\rm v}|^{2} = |\p_{\pi_{0}}({\rm v})|^{2} + |\p_{\pi_{0}^{\perp}}({\rm v})|^{2},
\end{equation}
and observe that the hypothesis \eqref{reparametrization:hp} readily implies that
\begin{equation} \label{wp:2}
|\p_{\pi_{0}^{\perp}}({\rm v})|^{2} = |f_{l}(y) - \bphi(x)|^{2} \leq c_{0}^{2} s^{2}.
\end{equation}
As for the ``horizontal'' component of the vector ${\rm v}$, write
\begin{equation} \label{wp:3}
{\rm v} = \sum_{i=1}^{n} v^{i} \nu_{i}(x),
\end{equation}
where $v = \left( v^{1}, \dots, v^{n} \right) \in \R^{n}$, $\lbrace \nu_{1}, \dots, \nu_{n} \rbrace$ is the standard orthonormal frame on the normal bundle of $\Sigma$ previously introduced, and where, with a slight abuse of notation, we are writing $\nu_{i}(x)$ instead of $\nu_{i}\left( {\bf \Phi}(x) \right)$. In this way,
\begin{equation} \label{wp:4}
|\p_{\pi_{0}}({\rm v})|^{2} \leq \left( \sum_{i=1}^{n} |v^{i}| |\p_{\pi_{0}}(\nu_{i}(x))| \right)^{2}.
\end{equation}
Clearly, in doing this we are tacitly assuming that $c_{0}$ is chosen so small that all the conclusions of Lemma \ref{trivialization} hold (in particular, we will always assume $c_{0} \leq 1$). Now, the quantity $|\p_{\pi_{0}}(\nu_{i}(x))|$ can be estimated by 
\begin{equation} \label{wp:5}
|\p_{\pi_{0}}(\nu_{i}(x))| \leq \left| \cos\left( \frac{\pi}{2} - \theta_{i}(x) \right) \right|,
\end{equation}
where $\theta_{i}(x)$ is the angle between $\nu_{i}(x)$ and $\p_{\pi_{0}^{\perp}}(\nu_{i}(x))$. In turn, this angle is controlled by $C |D\bphi(x)|$, with $C$ a geometric constant, because $\nu_{i}(x)$ is orthogonal to $T_{{\bf \Phi}(x)}\Sigma$. Thus, one has
\begin{equation} \label{wp:6}
|\p_{\pi_{0}}(\nu_{i}(x))| \leq \left| \sin(\theta_{i}(x)) \right| \leq C |D\bphi(x)|.
\end{equation}
Further estimating $|D\bphi(x)| \leq \|D\bphi\|_{C^0} \leq c_{0}$ by \eqref{reparametrization:hp} and inserting into \eqref{wp:4} yields:
\begin{equation} \label{wp:7}
|\p_{\pi_{0}}({\rm v})|^{2} \leq C c_{0}^{2} \left( \sum_{i=1}^{n} |v^{i}| \right)^{2} \leq C c_{0}^{2} \sum_{i=1}^{n} |v^{i}|^{2} = C c_{0}^{2} |{\rm v}|^{2},
\end{equation}
with $C = C(m,n)$. Combining \eqref{wp:1}, \eqref{wp:2} and \eqref{wp:7} produces 
\begin{equation} \label{wp:8}
|{\rm v}|^{2} \leq C c_{0}^{2} |{\rm v}|^{2} + c_{0}^{2} s^{2}.
\end{equation}
If
\begin{equation} \label{condition:2}
c_{0}^{2} \leq C^{-1} \left( 1 - \left( \frac{s}{r} \right)^{2} \right),
\end{equation}
then the term $C c_{0}^{2} |{\rm v}|^{2}$ on the right-hand side can be absorbed on the left-hand side, and in turn \eqref{wp:8} leads to
\begin{equation} \label{wp:9}
\dist(\xi, \Sigma)^{2} \leq c_{0}^{2} r^{2},
\end{equation}
which shows that the current $T$ is in fact compactly supported in ${\bf U}$. Together with the fact that ${\bf G}_{f}$ has no boundary in ${\bf U}$, such a result implies that the boundary of $T$ is actually supported in $\Pi^{-1}(\partial\Sigma)$ as soon as the constant $c_{0}$ is chosen in agreement with \eqref{condition:1} and \eqref{condition:2}. Hence, under these conditions we can deduce that $\partial \Pi_{\sharp} T$ is supported in $\partial \Sigma$. Thus, we are allowed to apply the constancy theorem (cf. \cite[Theorem 26.27]{Sim83}), and consequently conclude that $\Pi_{\sharp}T = k \llbracket \Sigma \rrbracket$ for some $k \in \mathbb{Z}$. In order to show that $k = Q$, we consider the functions $\bphi_{t} := t \bphi$ for $t \in \left[ 0, 1 \right]$, the corresponding manifolds $\Sigma_{t} := {\rm Gr}(\bphi_{t})$ with the associated projections $\Pi_{t} \colon {\bf U}_{t} \to \Sigma_{t}$. Also in this case, the constancy theorem produces $(\Pi_{t})_{\sharp}({\bf G}_{f} \mres {\bf U}_t ) = k(t) \llbracket \Sigma_{t} \rrbracket$. On the other hand, since the map
\[
t \in \left[ 0, 1 \right] \mapsto (\Pi_{t})_{\sharp}({\bf G}_{f} \mres {\bf U}_t)
\]
is continuous in the space of currents, one infers that $t \mapsto k(t)$ is a continuous integer-valued function, and thus it is constant. Since $k(0) = Q$, then necessarily also $k = k(1) = Q$, and the claim is proved.

Now, the fact that $\Pi_{\sharp}T = Q \llbracket \Sigma \rrbracket$ does not immediately imply that $\sum_{\xi \in M_p} M(\xi) = Q$, since there could in principle be cancellations, and the total mass on the fiber could be larger than $Q$. To see that this is not the case, consider, for every $p \in \Sigma$, the $0$-dimensional current $T_{p} := \langle {\bf G}_{f}, \Pi, p \rangle$ supported on the intersection ${\rm Gr}(f) \cap \Pi^{-1}(\{p\})$. By the slicing theory (cf. \cite[Section 4.3]{Federer69}), one has that there exists a set $Z \subset \Sigma$ with $\Ha^{m}(Z) = 0$ such that the following holds for every $p \in \Sigma \setminus Z$:
\begin{itemize}
\item[$(i)$] $T_{p}$ consists of a finite sum of Dirac masses $\sum_{j=1}^{J_p} m_{j} \llbracket \xi_{j} \rrbracket$ with coefficients $m_{j} \in \Z$;
\item[$(ii)$] for every $j \in \lbrace 1, \dots J_{p} \rbrace$, $\xi_{j} \in {\rm Gr}(f) \cap \Pi^{-1}(\{p\})$ and $|m_{j}| = M_{f}(\p_{\pi_{0}}(\xi_j), \p_{\pi_{0}^{\perp}}(\xi_j)) = M(\xi_j)$;
\item[$(iii)$] if $\vec{\nu}$ is the continuous unit $n$-vector orienting $\Pi^{-1}(\{p\})$ compatibly with the orientation of $\Sigma$, then the sign of $m_{j}$ is ${\rm sgn}\left( \langle \vec{T}(\xi_j) \wedge \vec{\nu}(\xi_j), \vec{e} \rangle \right)$, where $\vec{e} := e_{1} \wedge \dots \wedge e_{m} \wedge \dots \wedge e_{m+n}$, with $\{e_{1}, \dots, e_{m}\}$ the standard orthonormal basis of $\R^{m} \times \{0\}$ and $\{e_{m+1}, \dots, e_{m+n}\}$ the standard orthonormal basis of $\{0\} \times \R^{n}$.
\end{itemize}
Since $\| \bphi \|_{C^{1}} + \Lip(f) \leq c_{0}$, if $c_{0}$ is suitably small then every $\vec{T}(\xi_{j})$ is close to $\vec{e}_{m} := e_{1} \wedge \dots \wedge e_{m}$, whereas every $\vec{\nu}(\xi_j)$ is close to $\vec{e}_{n} := e_{m+1} \wedge \dots \wedge e_{m+n}$, and therefore every $m_{j}$ is positive. Since $\sum_{j=1}^{J_p} m_{j} = Q$ because $\Pi_{\sharp}T = Q \llbracket \Sigma \rrbracket$, we conclude that \eqref{well-posedness:eq} holds for every $p \in \Sigma \setminus Z$.

Therefore, if $\tilde{Z}$ denotes the set of points $p \in \Sigma$ such that \eqref{well-posedness:eq} does not hold (and hence $\sum_{\xi \in M_{p}} M(\xi) > Q$) then one has $\tilde{Z} \subset Z$. Now, we claim that in fact $\tilde{Z} = \emptyset$. At the same time, we will also prove that $M$ is coherent. In order to see this, fix $p \in \Sigma$, and assume that $M_{p} = \{ \xi_{1}, \dots, \xi_{J} \}$, with $m_{j} := M(\xi_{j})$. Let $\mathcal{V} = \{V_{1}, \dots V_{J} \}$ denote a collection of disjoint bounded open sets in ${\bf U}$ such that $M_{p} \cap V_{j} = \{\xi_{j}\}$ for every $j = 1,\dots,J$. We will show that for every $j$ there is an open neighborhood $\U_{j}$ of $p$ in $\Sigma$ such that
%
%This will be an easy consequence of the fact that $M$ is coherent. Indeed, the coherence of $M$ would immediately imply that $\tilde Z$ is open in $\Sigma$, and thus empty, since $\Ha^{m}(\tilde Z) = 0$. Hence, we only have to prove that $M$ is coherent.

\begin{equation} \label{co_pr:1}
\sum_{\zeta \in M_{q} \cap V_{j}} M(\zeta) = m_{j} \hspace{0.5cm} \mbox{for every } q \in \U_{j},
\end{equation}  
so that the coherence condition will hold in $\U := \bigcap_{j} \U_{j}$. Consider the current $T_{j} := \Pi_{\sharp}({\bf G}_{f} \mres V_{j})$. We claim the following: there exists $\tilde{\U_{j}} \subset \Sigma$ open neighborhood of $p$ such that 
\begin{equation} \label{co_pr:2}
\spt\left( \partial(T_{j} \mres \tilde{\U_{j}})\right) \subset \partial \tilde{\U_{j}}.
\end{equation}
The validity of this claim allows us to conclude the proof, showing both that $\tilde Z = \emptyset$ and that $M$ is coherent. Let us assume the claim for the moment. If \eqref{co_pr:2} holds true, then the constancy theorem implies the existence of a constant $k_{j} \in \mathbb{N}$ such that $T_{j} \mres \tilde{\U_{j}} = k_{j} \llbracket \tilde{\U_j} \rrbracket$, which in turn gives $\Pi_{\sharp} \langle {\bf G}_f \mres V_j, \Pi, q \rangle = k_j  \llbracket q \rrbracket$ for $\Ha^m$-a.e. $q \in \tilde{\U_j}$, cf. for instance \cite[Theorem 4.3.2(7)]{Federer69}. Then, since no cancellations are allowed, $\M(\langle {\bf G}_f \mres V_j, \Pi, q \rangle ) = k_j$ for $\Ha^m$-a.e. $q \in \tilde{\U_j}$. At the same time, under the assumption that $c_0$ is smaller than a geometric constant, the Taylor expansion for the mass of a multi-valued Lipschitz graph (see \cite[Corollary 3.3]{DLS13a}) gives that
\begin{equation} \label{density estimate}
m_j - \frac{1}{2} \leq \liminf_{\rho \to 0} \frac{\|{\bf G}_f\|(B^{m+n}_{\rho}(\xi_j))}{\omega_m \rho^m} \leq \limsup_{\rho \to 0} \frac{\|{\bf G}_f\|(B^{m+n}_{\rho}(\xi_j))}{\omega_m \rho^m} \leq m_j + \frac{1}{2}\,.
\end{equation}
%In particular, there exists $\rho_0 > 0$ such that
%\begin{equation} 
%m_j - \frac{3}{4} \leq \frac{\|{\bf G}_f\|(B^{m+n}_{\rho}(p))}{\omega_m \rho^m} \leq m_j + \frac{3}{4} \qquad \mbox{for every } \rho \leq \rho_0\,.
%\end{equation}
Thus, if we consider slices $\langle {\bf G}_f, \Pi, q \rangle$ at points $q$ in a suitable neighborhood $p \in \U_j \subset \tilde{\U_j}$ then the density estimate \eqref{density estimate} implies that $k_j \geq m_j$. Now, if, by contradiction, the point $p$ has been chosen in the set $\tilde Z$, namely $\sum_{j=1}^J m_j = Q' > Q$, then the above discussion gives that at $\Ha^m$-a.e. point $q \in \U = \bigcap_j \U_j$ one has $\sum_{\zeta \in M_q} M(\zeta) \geq Q' > Q$. Hence, $\tilde{Z}$ contains a set of full measure, which is the required contradiction. This concludes the proof that $M$ is a well-defined $Q$-multisection of the normal bundle of $\Sigma$. At the same time, the fact $\sum_{\zeta \in M_q} M(\zeta) = Q$ at \emph{every} $q$ forces $k_j = m_j$ for every $j$, and thus the validity of \eqref{co_pr:1} in a suitable neighborhood $p \in \U_j$ follows again from \eqref{density estimate}. This shows that $M$ is coherent.

%On the other hand, it would necessarily be $k_{j} = m_{j}$, because $\langle {\bf G}_{f} \mres V_{j}, \Pi, p \rangle = m_{j} \llbracket \xi_{j} \rrbracket$. Then, since no cancellations are allowed, if $q \in \U_{j}$ the slice $\langle {\bf G}_{f} \mres V_{j}, \Pi, q \rangle$ must be necessarily supported in a set of points $\{ \zeta_{1}, \dots, \zeta_{J_q} \} \subset {\rm Gr}(f) \cap V_j$ with $\sum_{j=1}^{J_q} M(\zeta_j) = m_{j}$, which concludes the proof of \eqref{co_pr:1}.

Therefore, we are just left with showing \eqref{co_pr:2}. By contradiction, assume that there exists a sequence $\{p_{h}\}_{h=1}^{\infty} \subset \Sigma$ with $p_{h} \to p$ and such that $p_{h} \in \spt(\partial T_{j})$ for every $h$. Since the push-forward and boundary operators commute, and since ${\bf G}_{f}$ has no boundary in $V_{j}$, this would imply the existence of a sequence of points $\zeta_{h} \in {\rm Gr}(f) \cap \partial V_{j}$ such that $\Pi(\zeta_{h}) = p_{h}$. By the compactness of $\partial V_j$ and the continuity of the projection, a subsequence of the $\zeta_{h}$'s would converge to a point $\bar{\zeta} \in \partial V_j$ such that $\Pi(\bar{\zeta}) = p$. Furthermore, since $f$ is continuous ${\rm Gr}(f)$ is closed, and thus $\bar{\zeta} \in {\rm Gr}(f)$. But this is an evident contradiction, since by assumption ${\bf G}_{f}$ is supported outside of $\Pi^{-1}(\{p\}) \cap \partial V_j$. This shows the validity of \eqref{co_pr:2}, and concludes the proof of the Proposition.
\end{proof}

As an immediate consequence, the above result allows us to define the required reparametrization $F$: if $\Sigma$, ${\bf U}$ and $f$ are such that \eqref{reparametrization:hp} holds with the constant $c_{0}$ given by Proposition \ref{well-posedness}, we set
\begin{equation} \label{rep:Fdef}
F(p) := \sum_{\xi \in \Pi^{-1}(\{p\})} M(\xi) \llbracket \xi \rrbracket \hspace{0.5cm} \mbox{for every } p \in \Sigma.
\end{equation}  
By Proposition \ref{well-posedness}, $F$ is a well defined $Q$-valued function on $\Sigma$. By construction, the associated map $N \colon \Sigma \to \A_{Q}(\R^{m+n})$ given by
\begin{equation} \label{rep:Ndef}
N(p) := \sum_{\xi \in \Pi^{-1}(\{p\})} M(\xi) \llbracket \xi - p \rrbracket
\end{equation}
is a well defined $Q$-valued vector field with values in the normal bundle, and hence it satisfies property $(ii)$ in Definition \ref{reparametrization:def}. Furthermore, it is evident from the very definition of $M$ that property $(iii)$ in Definition \ref{reparametrization:def} is satisfied as well.

Hence, we are only left with proving that $N$ is Lipschitz continuous and that properties \eqref{reparametrization:th1}-\eqref{reparametrization:th4} are satisfied.

\begin{proposition} \label{vl:prop}
If $c_{0}$ is small enough, depending on $m$, $n$, $r - s$ and $\frac{r}{s}$, then there exists $\tilde{\tau} > 0$ such that the multisection $M$ is $\tilde{\tau}$-vertically limited. Furthermore,
\begin{equation} \label{vl:eq}
\tilde{\tau} \leq C \left( \|N\|_{C^{0}} \|D^{2}\bphi\|_{C^{0}} + \|D\bphi\|_{C^{0}} + \Lip(f) \right),
\end{equation}
where $C = C(m,n)$ and $\|N\|_{C^0} := \sup_{p \in \Sigma} |N(p)| = \sup_{p \in \Sigma} \G(N(p), Q \llbracket 0 \rrbracket)$.
\end{proposition}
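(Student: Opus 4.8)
The plan is to verify Definition \ref{vert_lim_def} directly: since $\Sigma$ is a graph over $B_s$, the inverse of ${\bf \Phi}$ is a \emph{global} chart, and by Lemma \ref{trivialization} the frame $\nu_1,\dots,\nu_n$ is a \emph{global} trivialization of the normal bundle, so there is essentially one flattened multisection to control, namely
\[
M_\alpha(x,v)=M\Bigl({\bf \Phi}(x)+\sum_{i=1}^n v^i\,\nu_i(x)\Bigr),\qquad (x,v)\in B_s\times\R^n
\]
(with $M$ understood to vanish off ${\bf U}$, so that $M_\alpha(x,v)>0$ forces the argument into ${\bf U}$). I must show that $M_\alpha$ satisfies a $\tilde\tau$-cone condition. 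Fix $(x,v)$ with $M_\alpha(x,v)>0$ and set $p:={\bf \Phi}(x)$, ${\rm v}:=\sum_i v^i\nu_i(x)$, $\xi:=p+{\rm v}$; then $M(\xi)>0$, i.e.\ $M_f(\p_{\pi_0}\xi,\p_{\pi_0^\perp}\xi)>0$. Since $M_f$ satisfies an $\ell$-cone condition with $\ell=\Lip(f)$ (Proposition \ref{Lip:prop}), there are neighbourhoods of $\p_{\pi_0}\xi$ and $\p_{\pi_0^\perp}\xi$ realizing it; by continuity of $(y,w)\mapsto\xi':={\bf \Phi}(y)+\sum_i w^i\nu_i(y)$ they pull back to neighbourhoods $U\ni x$, $V\ni v$ such that for $(y,w)\in U\times V$ with $M_\alpha(y,w)>0$ one has $\bigl|\p_{\pi_0^\perp}(\xi'-\xi)\bigr|\le\ell\,\bigl|\p_{\pi_0}(\xi'-\xi)\bigr|$.

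The heart of the matter is a quantitative comparison between the fibre-coordinate increment $w-v$ and $\Delta:=|y-x|$. Writing ${\rm v}':=\xi'-{\bf \Phi}(y)=\sum_i w^i\nu_i(y)$ and exploiting orthonormality of $\{\nu_i(x)\}$ together with the algebraic identity
\[
\sum_i(w^i-v^i)\nu_i(x)=\p_{\varkappa_x}({\rm v}'-{\rm v})-\p_{\varkappa_x}\Bigl(\sum_i w^i\bigl(\nu_i(y)-\nu_i(x)\bigr)\Bigr),\qquad {\rm v}'-{\rm v}=(\xi'-\xi)-\bigl({\bf \Phi}(y)-{\bf \Phi}(x)\bigr),
\]
I estimate term by term using: the near-flatness bounds $|\p_{\varkappa_x}(\zeta)|\le\|D\bphi\|_{C^0}|\zeta|$ for $\zeta\in\pi_0$ and $|\p_{\pi_0}(\nu_i(x))|\le C\|D\bphi\|_{C^0}$ (cf.\ \eqref{wp:6}); the graph estimate $|\bphi(y)-\bphi(x)|\le\|D\bphi\|_{C^0}\Delta$; the frame estimate $|\nu_i(y)-\nu_i(x)|\le C\|D^2\bphi\|_{C^0}\Delta$ from Lemma \ref{trivialization}; the fibre bound $|v^i|,|w^i|\le\|N\|_{C^0}$ (immediate from \eqref{rep:Ndef}, since each $|\xi-p|^2\le\G(N(p),Q\llbracket0\rrbracket)^2$); and the $M_f$-cone inequality above. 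Splitting $\xi'-\xi$ into its $\pi_0$- and $\pi_0^\perp$-parts this yields
\[
|w-v|\ \le\ \bigl(C\|D\bphi\|_{C^0}+\ell\bigr)\,\bigl|\p_{\pi_0}(\xi'-\xi)\bigr|+C\bigl(\|N\|_{C^0}\|D^2\bphi\|_{C^0}+\|D\bphi\|_{C^0}\bigr)\Delta,
\]
and, writing $\p_{\pi_0}(\xi'-\xi)=(y-x)+\p_{\pi_0}({\rm v}'-{\rm v})$ and estimating $\p_{\pi_0}({\rm v}'-{\rm v})$ in the same way,
\[
\bigl|\p_{\pi_0}(\xi'-\xi)\bigr|\ \le\ \Delta+C\|D\bphi\|_{C^0}\,|w-v|+C\bigl(\|N\|_{C^0}\|D^2\bphi\|_{C^0}+\|D\bphi\|_{C^0}\bigr)\Delta.
\]
Substituting the second inequality into the first, the coefficient of $|w-v|$ on the right is $O(c_0^2)$ by \eqref{reparametrization:hp}; choosing $c_0$ small I absorb it and obtain $|w-v|\le C\bigl(\|N\|_{C^0}\|D^2\bphi\|_{C^0}+\|D\bphi\|_{C^0}+\Lip(f)\bigr)\Delta$, which is precisely the cone condition with $\tilde\tau$ as in \eqref{vl:eq}.

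The main obstacle is the bookkeeping in this last step: one must track which contributions carry a factor $\|D\bphi\|_{C^0}$ (the tilt of $\Sigma$ and the horizontal projections of the $\nu_i$) and which carry $\|N\|_{C^0}\|D^2\bphi\|_{C^0}$ (the variation of the normal frame over the base, weighted by the fibre height), while handling the mild circularity that $|\p_{\pi_0}(\xi'-\xi)|$ and $|w-v|$ each enter the estimate of the other — resolved by the smallness of $c_0$ and the absorption argument above. A minor wrinkle is that Lemma \ref{trivialization} is phrased with $\|D\bphi\|_{C^1}$ rather than $\|D^2\bphi\|_{C^0}$; the discrepancy is an extra $\|D\bphi\|_{C^0}$-term multiplied by the bounded quantity $\|N\|_{C^0}$, hence harmlessly absorbed into $C\|D\bphi\|_{C^0}$.
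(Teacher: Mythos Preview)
Your proposal is correct and follows essentially the same route as the paper: reduce to the single flattened multisection $M_\alpha$ via the global chart ${\bf \Phi}^{-1}$ and the frame $\{\nu_i\}$, invoke the $\ell$-cone condition for $M_f$ pulled back by continuity, derive coupled inequalities for $|w-v|$ and $|\p_{\pi_0}(\xi'-\xi)|$, and close by absorbing the $O(c_0^2)|w-v|$ cross term. The only cosmetic difference is that the paper extracts $w^i-v^i$ via $\langle\,\cdot\,,\nu_i(y)\rangle$ (equation \eqref{vl:5}) while you project onto $\varkappa_x$ via your algebraic identity; the resulting term-by-term estimates and the final absorption are identical, and your remark on the $\|D\bphi\|_{C^1}$ versus $\|D^2\bphi\|_{C^0}$ discrepancy in Lemma \ref{trivialization} is in fact slightly more careful than the paper itself.
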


\begin{proof}
First, let us exploit again the orthonormal frame $\lbrace \nu_{1}, \dots, \nu_{n} \rbrace$ in order to introduce coordinates on ${\bf U}$. Precisely, we let $\Psi$ denote the map $\xi \in {\bf U} \mapsto \left( \Pi(\xi), v(\xi) \right) \in \Sigma \times \R^n$, where $p := \Pi(\xi)$ is the base point of $\xi$ on $\Sigma$, and $v(\xi) = \left( v^{1}(\xi), \dots, v^{n}(\xi) \right)$ is the set of coordinates of the vector ${\rm v} := \xi - p \in \varkappa_p$ with respect to the basis $\nu_{1}(p), \dots, \nu_{n}(p)$, explicitly given by $v^{i}(\xi) = \langle \xi - p, \nu_{i}(p) \rangle$ for $i = 1,\dots,n$. The map $\Psi$ is a global trivialization of the bundle ${\bf U}$; moreover, since ${\bf \Phi}^{-1}$ is a global chart on $\Sigma$, then, in order to show that $M$ is $\tilde{\tau}$-vertically limited, it suffices to prove that the $Q$-multisection
\[
\widetilde{M} := M \circ \Psi^{-1} \circ \left( {\bf \Phi} \times {\rm id}_{\R^n} \right) \colon B_{s} \times \R^n \to \N
\] 
satisfies the $\tilde{\tau}$-cone condition. In order to see this, fix $\left( x,v \right) \in B_{s} \times \R^n$, and denote by $\xi = \xi(x,v)$ the corresponding point in ${\bf U}$, given by
\begin{equation} \label{vl:1}
\xi(x,v) := {\bf \Phi}(x) + \sum_{i=1}^{n} v^{i} \nu_{i}(x).
\end{equation}
Assume that $\widetilde{M}(x,v) = M(\xi) > 0$: the goal is then to prove that there exists a positive number $\varepsilon$ such that if $(y,w) \in B^{m}_{\varepsilon}(x) \times B^{n}_{\varepsilon}(v)$ satisfies $\widetilde{M}(y,w) > 0$, then necessarily
\begin{equation} \label{vl:2}
|w - v| \leq \tilde{\tau} |y - x|.
\end{equation}

Let $\left( x', v' \right)$ denote the coordinates of $\xi$ in the standard reference frame on $\R^{m+n}$, that is $x' := \p_{\pi_0}(\xi)$ and $v' := \p_{\pi_{0}^{\perp}}(\xi)$. Observe that the condition $M(\xi) > 0$ is equivalent to say that $v' \in \spt(f(x'))$, and in fact $M(\xi) = M_{f}(x',v')$. Now, since the $Q$-valued function $f$ is $\ell$-Lipschitz continuous, $M_f$ satisfies the $\ell$-cone condition, and thus there exists $\delta > 0$ such that if $(y',w') \in B^{m}_{\delta}(x') \times B^{n}_{\delta}(v')$ is such that $M_{f}(y',w') > 0$ then
\begin{equation} \label{vl:3}
|w' - v'| \leq \ell |y' - x'|.
\end{equation}
We first claim the following: there exists $0 < \varepsilon = \varepsilon(\delta,m,n)$ with the property that if 
\[\zeta = {\bf \Phi}(y) + \sum_{i=1}^{n} w^{i} \nu_{i}(y) \hspace{0.5cm} \mbox{ with } (y,w) \in B^{m}_{\varepsilon}(x) \times B^{n}_{\varepsilon}(v),
\]
then
\[
|\p_{\pi_{0}}(\zeta) - x'| < \delta, \hspace{0.5cm} |\p_{\pi_{0}^{\perp}}(\zeta) - v'| < \delta.
\]
This can be immediately seen by estimating:
\begin{equation} \label{vl:4}
\begin{split}
|\zeta - \xi| &\leq |{\bf \Phi}(y) - {\bf \Phi}(x)| + \sum_{i=1}^{n} |w^{i} \nu_{i}(y) - v^{i}\nu_{i}(x)| \\
&\leq \left( 1 + \|D\bphi\|_{C^0} \right) |y - x| + \sum_{i=1}^{n} \left( |w^{i}| |\nu_{i}(y) - \nu_{i}(x)| + |v^{i} - w^{i}| \right) \\
&\leq C \left( 1 + \| D\bphi \|_{C^1} \right) |y - x| + C |w - v|,
\end{split}
\end{equation}
where $C = C(m,n)$ is a geometric constant. The conclusion immediately follows, since $|\p_{\pi_{0}}(\zeta) - x'| = |\p_{\pi_{0}}(\zeta - \xi)| \leq |\zeta - \xi|$ and $|\p_{\pi_{0}^{\perp}}(\zeta) - v'| = |\p_{\pi_{0}^{\perp}}(\zeta - \xi)| \leq |\zeta - \xi|$.

Now, let $(y,w)$ be any point in $B^{m}_{\varepsilon}(x) \times B^{n}_{\varepsilon}(v)$ such that for the corresponding $\zeta \in {\bf U}$ one has $M(\zeta) > 0$. By the above claim, if we set $y' := \p_{\pi_{0}}(\zeta)$ and $w' := \p_{\pi_{0}^{\perp}}(\zeta)$, then $(y',w') \in B^{m}_{\delta}(x') \times B^{n}_{\delta}(v')$, and thus the condition $M_{f}(y',w') > 0$ implies that \eqref{vl:3} holds. Hence, we proceed with the proof of \eqref{vl:2}. For any $i = 1,\dots,n$, one has:
\begin{equation} \label{vl:5}
\begin{split}
|w^{i} - v^{i}| &= \left| \langle \zeta - {\bf \Phi}(y), \nu_{i}(y) \rangle - \langle \xi - {\bf \Phi}(x), \nu_{i}(x) \rangle \right| \\
&\leq |\langle \xi - {\bf \Phi}(x), \nu_{i}(x) - \nu_{i}(y) \rangle| + |\langle \xi - \zeta, \nu_{i}(y) \rangle| + |\langle {\bf \Phi}(x) - {\bf \Phi}(y), \nu_{i}(y) \rangle|. \\
\end{split} 
\end{equation}
Now, since $\xi \in M_{{\bf \Phi}(x)}$, the vector $\xi - {\bf \Phi}(x)$ is in the support of $N({\bf \Phi}(x))$, and thus
\[
|\xi - {\bf \Phi}(x)| \leq |N({\bf \Phi}(x))|.
\]
Therefore, if we apply Lemma \ref{trivialization} we easily estimate
\begin{equation} \label{vl:5a}
|\langle \xi - {\bf \Phi}(x), \nu_{i}(x) - \nu_{i}(y) \rangle| \leq C \| N \|_{C^{0}(\Sigma)} \| D^{2} \bphi \|_{C^{0}} |y - x|.
\end{equation}
In order to estimate the second and third term of \eqref{vl:5}, instead, we first decompose both $\xi - \zeta$ and ${\bf \Phi}(x) - {\bf \Phi}(y)$ by projecting them onto the planes $\pi_0$ and $\pi_{0}^{\perp}$. Then, we use \eqref{wp:6} to conclude that
\begin{equation} \label{vl:5b}
\begin{split}
|\langle \xi - \zeta, \nu_{i}(y) \rangle| &\leq |\langle y' - x', \p_{\pi_{0}}(\nu_{i}(y)) \rangle| + |\langle w' - v', \p_{\pi_{0}^{\perp}}(\nu_{i}(y)) \rangle| \\
&\leq C |D\bphi(y)| |y' - x'| + |w' - v'| \\
&\overset{\eqref{vl:3}}{\leq} \left( C \| D\bphi \|_{C^0} + \ell \right) |y' - x'|,  
\end{split}
\end{equation}
and analogously
\begin{equation} \label{vl:5c}
\begin{split}
|\langle {\bf \Phi}(x) - {\bf \Phi}(y), \nu_{i}(y) \rangle| &\leq C |D\bphi(y)| |y - x| + |\bphi(y) - \bphi(x)| \\
&\leq C \| D\bphi \|_{C^0} |y - x|. 
\end{split}
\end{equation}

Inserting \eqref{vl:5a}, \eqref{vl:5b} and \eqref{vl:5c} into \eqref{vl:5}, we then conclude the following estimate:
\begin{equation} \label{vl:6}
|w^{i} - v^{i}| \leq C \left( \|N\|_{C^0} \|D^{2}\bphi\|_{C^0} + \|D\bphi\|_{C^0} \right) |y - x| + \left( C \|D\bphi\|_{C^0} + \ell \right) |y' - x'|.
\end{equation}
Therefore, in order to conclude, we need to bound:
\begin{equation} \label{vl:7}
\begin{split}
|y' - x'| &= |\p_{\pi_0}(\zeta) - \p_{\pi_0}(\xi)| \\
&= \left|y + \sum_{i=1}^{n} w^{i} \p_{\pi_0}(\nu_{i}(y)) - x - \sum_{i=1}^{n} v^{i} \p_{\pi_0}(\nu_{i}(x))\right| \\
&\leq |y - x| + \sum_{i=1}^{n} \left( |w^{i}| |\nu_{i}(y) - \nu_{i}(x)| + |w^{i} - v^{i}| |\p_{\pi_0}(\nu_{i}(x))| \right) \\
&\leq \left( 1 + C \|D^{2}\bphi\|_{C^0} \right) |y - x| + C \|D\bphi\|_{C^0} |w - v|.
\end{split}
\end{equation}
If we combine \eqref{vl:6} and \eqref{vl:7}, after standard algebraic computations we obtain:
\begin{equation} \label{vl:8}
|w - v| \leq C \left( \|N\|_{C^0} \|D^{2}\bphi\|_{C^0} + \| D\bphi \|_{C^0} + \Lip(f) \right) |y - x| + C c_{0}^{2} |w - v|,
\end{equation}
where the constant $C$ appearing on the right-hand side of the inequality is purely geometric, and, in particular, does not depend on $c_0$. This allows us to conclude that if $c_0$ is such that
\begin{equation} \label{vl:9}
C c_{0}^{2} \leq \frac{1}{2},
\end{equation}
then a cone condition for $\widetilde{M}$ holds in the form
\begin{equation} \label{vl:10}
|w - v| \leq \tilde{\tau} |y - x|
\end{equation}
with $\tilde{\tau}$ as in \eqref{vl:eq} for any $(y,w)$ in a suitable neighborhood of $(x,v)$ such that $\widetilde{M}(y,w) > 0$. Since the choice of the point $(x,v)$ was arbitrary, the proof is complete.
\end{proof}

\begin{proof}[Proof of Theorem \ref{reparametrization:thm}]
We start proving that $N$ is Lipschitz continuous. Let $c_{0}$ be such that Proposition \ref{well-posedness} and Proposition \ref{vl:prop} both hold. We make the following

\hspace{0.5cm} \textbf{Claim.} For every $p \in \Sigma$ there exists an open neighborhood $\U_{p}$ of $p$ in $\Sigma$ such that
\begin{equation} \label{proof:claim}
\G(N(q), N(p)) \leq \sqrt{Q} \tilde{\tau}' d_{\Sigma}(q,p) \hspace{0.5cm} \mbox{for every } q \in \U_{p},
\end{equation}
where $\tilde{\tau}'$ satisfies the same estimate as in equation \eqref{vl:eq} and $d_{\Sigma}(\cdot, \cdot)$ is the geodesic distance function on $\Sigma$. In order to see this, fix a point $p \in \Sigma$ and let $M_{p}$ denote, as usual, the set of points $\xi \in {\bf U}$ such that $\Pi(\xi) = p$ and $M(\xi) > 0$. Assume that $M_{p} = \lbrace \xi_{1}, \dots \xi_{J} \rbrace$. If $p = {\bf \Phi}(x)$, then for any $j = 1,\dots,J$ one has
\begin{equation} \label{proof:1}
\xi_{j} = {\bf \Phi}(x) + \sum_{i=1}^{n} v_{j}^{i} \nu_{i}(x).
\end{equation}
By Proposition \ref{vl:prop}, there exist neighborhoods $U_{j}$ of $x$ in $B_{s}$ and $V_{j}$ of $v_{j} := \left( v_{j}^{1}, \dots, v_{j}^{n} \right)$ in $\R^{n}$ such that if 
\begin{equation} \label{proof:2}
\zeta = \zeta(y,w) := {\bf \Phi}(y) + \sum_{i=1}^{n} w^{i} \nu_{i}(y) \hspace{0.5cm} \mbox{with } (y,w) \in U_{j} \times V_{j}
\end{equation}
is such that $M(\zeta) > 0$ then necessarily
\begin{equation} \label{proof:3}
|w - v_{j}| \leq \tilde{\tau} |y - x|.
\end{equation}
Let $\left( x(\zeta), v(\zeta) \right)$ denote the inverse mapping of $\zeta(x,v)$, given by
\begin{equation} \label{proof:4}
x(\zeta) := \p_{\pi_{0}} \circ \Pi(\zeta), \hspace{0.5cm} v^{i}(\zeta) := \langle \zeta - \Pi(\zeta), \nu_{i}(\Pi(\zeta)) \rangle,
\end{equation}
and let
\begin{equation} \label{proof:5}
\mathcal{V}_{j} := \lbrace \zeta \in {\bf U} \, \colon \, \left( x(\zeta), v(\zeta) \right) \in U_{j} \times V_{j} \rbrace.
\end{equation}
Each $\mathcal{V}_{j}$ is an open neighborhood of $\xi_{j}$, and moreover the cone condition \eqref{proof:3} forces $\mathcal{V}_{j} \cap M_{p} = \{ \xi_{j} \}$. We can also assume without loss of generality that the $\mathcal{V}_{j}$'s are pairwise disjoint. By Proposition \ref{well-posedness}, since $M$ is coherent there exists a neighborhood $\U_{p}$ of $p$ in $\Sigma$ such that
\begin{equation} \label{proof:6}
\sum_{\zeta \in \Pi^{-1}(\{q\}) \cap \mathcal{V}_{j}} M(\zeta) = M(\xi_{j}) \hspace{0.5cm} \mbox{for every } q \in \U_{p}.
\end{equation}
Since 
\begin{equation} \label{proof:7}
\sum_{j=1}^{J} M(\xi_{j}) = Q,
\end{equation}
it is evident that when $q$ is chosen in $\U_{p}$ then any $\zeta \in \Pi^{-1}(\{q\})$ with $M(\zeta) > 0$ must be an element of one and only one $\mathcal{V}_{j}$, and thus we can write
\begin{equation} \label{proof:8}
N(q) = \sum_{j=1}^{J} \sum_{\zeta \in \Pi^{-1}(\{q\}) \cap \mathcal{V}_{j}} M(\zeta) \llbracket \zeta - q \rrbracket,
\end{equation}
whereas
\begin{equation} \label{proof:9}
N(p) = \sum_{j=1}^{J} M(\xi_{j}) \llbracket \xi_{j} - p \rrbracket.
\end{equation}
We can now estimate, for any $\zeta \in \Pi^{-1}(\{q\}) \cap \mathcal{V}_{j}$ with $M(\zeta) > 0$ and $q = {\bf \Phi}(y) \in \U_{p}$:
\begin{equation} \label{proof:10}
\begin{split}
\left| (\zeta - q) - (\xi_{j} - p) \right| &= \left| \sum_{i=1}^{n} w^{i} \nu_{i}(y) - \sum_{i=1}^{n} v_{j}^{i} \nu_{i}(x) \right| \\
&\leq \sum_{i=1}^{n} \left( |w^{i}| |\nu_{i}(y) - \nu_{i}(x)| + |w^{i} - v_{j}^{i}| \right) \\
&\leq C \| N \|_{C^{0}} \| D^{2}\bphi \|_{C^{0}} |y - x| + C |w - v_{j}| \\
&\overset{\eqref{proof:3}}{\leq} C \left( \| N \|_{C^{0}} \| D^{2}\bphi \|_{C^{0}} + \tilde{\tau} \right) |y - x| = \tilde{\tau}' |y - x|. 
\end{split}
\end{equation}
Observe that the constant $C$ appearing in \eqref{proof:10} is purely geometric, and that $\tilde{\tau}'$ also satisfies the bound in \eqref{vl:eq}. It is now evident that
\begin{equation} \label{proof:11}
\G(N(q), N(p))^{2} \leq Q \tilde{\tau}'^{2} |y - x|^{2},
\end{equation}
from which the claim follows because $|y - x| \leq |q - p| \leq d_{\Sigma}(q,p)$.

Now, we show how from the claim one can easily conclude the Lipschitz continuity of $N$ with the required estimates. Fix two distinct points $p,q \in \Sigma$, and let $\gamma \colon \left[ a, b \right] \to \Sigma$ be any (piecewise) smooth curve such that $\gamma(a) = p$ and $\gamma(b) = q$. By the claim, for every $t \in \left[ a, b \right]$ there exists a neighborhood $\U_{\gamma(t)}$ such that
\begin{equation} \label{proof:12}
\G(N(z), N(\gamma(t))) \leq \sqrt{Q} \tilde{\tau}' d_{\Sigma}(z, \gamma(t)) \hspace{0.5cm} \mbox{for every } z \in \U_{\gamma(t)}.
\end{equation}
Since $\gamma$ is continuous, there exist numbers $\delta_{t}$ such that
\begin{equation} \label{proof:13}
I_{t} := \left( t - \delta_{t}, t + \delta_{t} \right) \subset \gamma^{-1}(\U_{\gamma(t)}).
\end{equation}
The family $\lbrace I_{t} \rbrace$ is an open covering of the interval $\left[ a,b \right]$, and thus by compactness we can extract a finite subcovering $\lbrace I_{t_i} \rbrace_{i=0}^{K}$. We may assume, refining the subcovering if necessary, that an interval $I_{t_i}$ is not completely contained in an interval $I_{t_j}$ if $i \neq j$. If we relabel the indices of the points $t_{i}$ in a non-decreasing order, and thus in such a way that $\gamma(t_{i})$ preceeds $\gamma(t_{i+1})$, we can now choose an auxiliary point $s_{i,i+1}$ in $I_{t_i} \cap I_{t_{i+1}} \cap \left( t_{i}, t_{i+1} \right)$ for each $i = 0,\dots,K-1$. We can finally conclude:
\begin{equation} \label{proof:14}
\begin{split}
\G(N(q), &N(p)) \leq \G(N(p), N(\gamma(t_{0}))) \\
&+ \sum_{i=0}^{K-1} \left( \G(N(\gamma(t_{i})), N(\gamma(s_{i,i+1}))) + \G(N(\gamma(s_{i,i+1})), N(\gamma(t_{i+1}))) \right) + \G(N(\gamma(t_K)), N(q)) \\
& \quad \quad \, \overset{\eqref{proof:13}}{\leq} \sqrt{Q} \tilde{\tau}' \mathscr{L}(\gamma),
\end{split}
\end{equation} 
%&\overset{\eqref{proof:13}}{\leq} \sqrt{Q} \tilde{\tau}' \left( d_{\Sigma}(p, \gamma(t_{0})) + \sum_{i=0}^{K-1} \left( d_{\Sigma}(\gamma(t_{i}), \gamma(s_{i,i+1})) + d_{\Sigma}(\gamma(s_{i,i+1}), \gamma(t_{i+1})) \right) + d_{\Sigma}(\gamma(t_{K}), q) \right) \\
where $\mathscr{L}(\gamma)$ is the length of the curve $\gamma$. Minimizing among all the piecewise smooth curves $\gamma$ joining $p$ to $q$, one finally obtains
\begin{equation} \label{proof:15}
\G(N(q), N(p)) \leq \sqrt{Q} \tilde{\tau}' d_{\Sigma}(q,p), 
\end{equation}
that is
\begin{equation} \label{proof:16}
\Lip(N) \leq \sqrt{Q} \tilde{\tau}'.
\end{equation}
The estimate \eqref{reparametrization:th1} is now just a consequence of \eqref{vl:eq}.

In order to complete the proof, we are left with showing the validity of \eqref{reparametrization:th2}, \eqref{reparametrization:th3} and \eqref{reparametrization:th4}. This can be done by reproducing verbatim the proof suggested by De Lellis and Spadaro in \cite{DLS13a}; the arguments will be presented here only for completeness.

We start with the proof of \eqref{reparametrization:th2} and \eqref{reparametrization:th3}. Fix a point $x \in B_{s}$, and let $p := {\bf \Phi}(x) \in \Sigma$. Observe that, by \eqref{reparametrized_multi} and \eqref{rep:Ndef}, the definition of the value of $N(p)$ does not change if we replace $\bphi$ with its first order Taylor expansion at $x$, since this operation preserves the fiber $\Pi^{-1}(\{p\})$. Furthermore, we can assume without loss of generality that $x = 0$ and $\bphi(0) = 0$. We will still use the symbols $\pi_{0}$ and $\pi_{0}^{\perp}$ to denote the planes $\R^{m} \times \{0\} \simeq \R^{m}$ and $\{0\} \times \R^{n} \simeq \R^{n}$ respectively, whereas the tangent space $T_{0}\Sigma$ and its orthogonal complement $T_{0}^{\perp}\Sigma$ will be denoted $\pi$ and $\varkappa$. 
Now, concerning the estimate \eqref{reparametrization:th2}, assume that $f(0) = \sum_{l=1}^{Q} \llbracket v_{l} \rrbracket$, set $\xi_{l} := \left( 0, v_{l} \right) \in \pi_{0} \times \pi_{0}^{\perp}$ and $q_{l} := \p_{\pi}(\xi_{l})$. If $N(q_{l}) = \sum_{j=1}^{Q} \llbracket \zeta_{l,j} \rrbracket$, then there is an index $j(l)$ such that $\zeta_{l,j(l)} = \xi_{l}$. If the point $\zeta_{l,j(l)}$ has coordinates $\left( q_{l}, v_{l}' \right)$ in the frame $\pi \times \varkappa$, we get
\begin{equation}
\begin{split}
|v_{l}| &\leq |q_{l}| + |v_{l}'| \leq |q_{l}| + |N(0)| + \G(N(0), N(q_{l})) \\
&\leq |N(0)| + \left( 1 + \Lip(N) \right) |q_{l}| \leq |N(0)| + C \left( 1 + \Lip(N) \right) \| D \bphi \|_{C^0} |v_{l}|,
\end{split}
\end{equation} 
where we have used that $q_{l} = |\p_{\pi}(\xi_{l})| \leq C |D \bphi(0)| |\xi_{l}| = C \| D \bphi \|_{C^{0}} |v_{l}|$. Now, we use \eqref{reparametrization:th1} with $\bphi$ linear to estimate
\begin{equation}
\Lip(N) \leq C \left( \| D \bphi \|_{C^{0}} + \Lip(f) \right) \leq C c_{0}.
\end{equation}
Thus, we conclude
\begin{equation}
|v_{l}| \leq |N(0)| + C (1 + C c_{0}) c_{0} |v_{l}|.
\end{equation}
Since the constant $C$ is purely geometric and does not depend on $c_{0}$, we deduce that if $c_{0}$ is sufficiently small then $|v_{l}| \leq 2 |N(0)|$. Summing over $l \in \{ 1, \dots Q \}$ we obtain $|f(0)| \leq 2 \sqrt{Q} |N(0)|$. The proof of the other inequality, namely $|N(0)| \leq 2 \sqrt{Q} |f(0)|$, is analogous, reversing the roles of the systems of coordinates $\pi_{0} \times \pi_{0}^{\perp}$ and $\pi \times \varkappa$. This concludes the proof of \eqref{reparametrization:th2}.

We proceed with the proof of \eqref{reparametrization:th3}. Assume once again that $f(0) = \sum_{l=1}^{Q} \llbracket v_{l} \rrbracket$, and write $N(0) = \sum_{l=1}^{Q} \llbracket \xi_{l} \rrbracket$. For every $l \in \{1, \dots, Q\}$, we set $x_{l} := \p_{\pi_0}(\xi_l)$, $w_{l} := \p_{\pi_{0}^{\perp}}(\xi_l)$ and $w_{l}' := \p_{\varkappa}(\xi_{l})$, so that the point $\xi_{l}$ is represented by coordinates $\left( x_{l}, w_{l} \right)$ in the standard reference frame $\pi_{0} \times \pi_{0}^{\perp}$ and by coordinates $\left( 0, w_{l}' \right)$ in the frame $\pi \times \varkappa$. As usual, we have:
\begin{equation} \label{lungh_orizz} 
|x_{l}| = |\p_{\pi_0}(\xi_l)| \leq C |D \bphi(0)| |\xi_{l}| \leq C |D \bphi(0)| |N(0)| =: \rho.
\end{equation}

Using these notations, one has $|\bfeta \circ N(0)| = Q^{-1} \left| \sum_{l} w_{l}' \right|$. On the other hand, under our usual smallness assumptions on the size of $c_{0}$, we can also assume that the operator norm of the linear and invertible transformation $L \colon \pi_{0}^{\perp} \to \varkappa$ is bounded by $2$. Thus, we can further estimate $|\bfeta \circ N(0)| \leq 2 Q^{-1} \left| \sum_{l=1}^{Q} w_{l} \right|$, so that in order to get \eqref{reparametrization:th3} it would suffice to prove the following:
\begin{equation} \label{stima_media}
\left| \sum_{l=1}^{Q} w_{l} \right| \leq \left| \sum_{l=1}^{Q} v_{l} \right| + C \Lip(f) \rho.
\end{equation}
In order to show the validity of \eqref{stima_media}, we notice that if we set $h := \Lip(f) \rho$, then we can decompose $f(0) = \sum_{j=1}^{J} \llbracket T_{j} \rrbracket$, where each $T_{j} \in \A_{Q_{j}}(\R^{n})$, $\sum_{j=1}^{J} Q_{j} = Q$ and with the property that:
\begin{itemize}
\item[$(i)$] ${\rm diam}(T_{j}) \leq 4Qh$;

\item[$(ii)$] $|y - z| > 4h$ for all $y \in \spt(T_{i})$ and $z \in \spt(T_{j})$ when $i \neq j$.
\end{itemize}

This claim can be justified with the following simple argument. First, we order the vectors $v_{l}$, and then we partition them in subcollections $T_{j}$ according to the following algorithm: $T_{1}$ contains $v_{1}$ and any other vector $v_{\ell}$ for which there exists a chain $v_{l(1)}, \dots v_{l(k)}$ with $l(1) = 1$, $l(k) = \ell$ and $|v_{l(i+1)} - v_{l(i)}| \leq 4h$ for every $i = 0, \dots, k-1$. By construction, ${\rm diam}(T_{1}) \leq 4Qh$, and if $\spt(T_{1}) = \spt(f(0))$ then we are finished. Otherwise, we construct $T_{2}$ applying the same algorithm to the vectors in $\spt(f(0)) \setminus \spt(T_{1})$. The construction of the algorithm guarantees that also property $(ii)$ is satisfied.

Given the above decomposition of $f(0)$, we observe that from the choice of the constants it follows that in the ball $B_{\rho}$ the function $f$ decomposes into the sum $f = \sum_{j=1}^{J} \llbracket f^{j} \rrbracket$ of $J$ Lipschitz functions $f^{j} \colon B_{\rho} \to \A_{Q_{j}}(\R^{n})$ with $\Lip(f^{j}) \leq \Lip(f)$ for every $j$. In agreement with this decomposition, also the graph ${\rm Gr}(f|_{B_\rho})$ separates into the union $\bigcup_{j=1}^{J} {\rm Gr}(f^{j})$. By the definition of the vector field $N$ (cf. again \eqref{reparametrized_multi} and \eqref{rep:Ndef}), the support of $N(0)$ contains points from each of these sets; furthermore, if $\xi \in \spt(N(0)) \cap {\rm Gr}(f^{j})$ then $M(\xi) = M_{f^{j}}(\p_{\pi_0}(\xi), \p_{\pi_{0}^{\perp}}(\xi))$. It follows that also $N(0)$ can be decomposed into $N(0) = \sum_{j=1}^{J} \sum_{i=1}^{Q_{j}} \llbracket \xi^{j}_{i} \rrbracket$ with the property that $\xi^{j}_{i} \in {\rm Gr}(f^{j})$ for every $i = 1,\dots,Q_{j}$. 

Now, by the definition of the distance $\G$, for each $\xi^{j}_{i} \in \spt(N(0))$ there exists a point $v_{k(j,i)} \in \spt(f^{j}(0))$ such that $|w^{j}_{i} - v_{k(j,i)}| \leq \G(f^{j}(x^{j}_{i}), f^{j}(0)) \leq \Lip(f) |x^{j}_{i}| \overset{\eqref{lungh_orizz}}{\leq} \Lip(f) \rho = h$. Hence, we conclude:
\[
\begin{split}
\left| \sum_{l=1}^{Q} w_{l} \right| &= \left| \sum_{j=1}^{J} \sum_{i=1}^{Q_j} w^{j}_{i} \right| \leq \left| \sum_{j=1}^{J} \sum_{i=1}^{Q_j} v^{j}_{i} \right| + \sum_{j=1}^{J} \sum_{i=1}^{Q_j} |w^{j}_{i} - v^{j}_{i}| \\
&\leq \left| \sum_{l=1}^{Q} v_{l} \right| + \sum_{j=1}^{J} \sum_{i=1}^{Q_j} \left( |w^{j}_{i} - v_{k(j,i)}| + |v_{k(j,i)} - v^{j}_{i}| \right) \leq \left| \sum_{l=1}^{Q} v_{l} \right| + Ch,
\end{split}
\] 
where we used that ${\rm diam}(f^{j}(0)) \leq 4Qh$. This proves \eqref{stima_media} and concludes the proof of \eqref{reparametrization:th3}.

Finally, we show that \eqref{reparametrization:th4} holds. Let $x \in B_{s}$, and assume that $\left( x, \bfeta \circ f(x) \right) = p + {\rm v}$ for some $p \in \Sigma$ and ${\rm v} \in T_{p}^{\perp}\Sigma$. Now, if ${\rm v} = 0$ then the above assumption implies that $\bfeta \circ f(x) = \bphi(x)$, and thus \eqref{reparametrization:th4} reduces to the first inequality in \eqref{reparametrization:th2}. On the other hand, if ${\rm v} \neq 0$ then we shift $\Sigma$ to $\tilde{\Sigma} := {\rm v} + \Sigma$. Then, if we apply Theorem \ref{reparametrization:thm} with $\tilde{\Sigma}$ in place of $\Sigma$ we obtain a vector field $\tilde{N}$ which satisfies $\tilde{N}(p + {\rm v}) = \sum_{l} \llbracket N_{l}(p) - {\rm v} \rrbracket$. Hence, $\G(N(p), Q \llbracket {\rm v} \rrbracket) = \G(\tilde{N}(p+{\rm v}), Q \llbracket 0 \rrbracket)$, which reduces the problem again to the case ${\rm v} = 0$. This completes the proof of Theorem \ref{reparametrization:thm}. 
\end{proof}

\appendix

\section{Commutation of push-forward and boundary in the $Q$-valued setting: an alternative proof} \label{sec:commutation}

In this Appendix we are going to provide an alternative proof (with respect to the existing literature, see \cite{Almgren00} and \cite{DLS13a}) of the fact that the multi-valued push-forward operator on Lipschitz submanifolds commutes with the boundary operator in the sense of currents. As anticipated, the proof is based on a double induction, both on the dimension $m$ of the manifold and on the number $Q$ of values that the function takes.

\begin{proof}[Proof of Theorem \ref{pf_bdry:thm}]
First observe that since every Lipschitz manifold can be triangulated, and since the statement is invariant under bi-Lipschitz homeomorphisms, it is enough to prove the theorem with $\Sigma = \left[ 0,1 \right]^{m}$. Furthermore, it suffices to show that the theorem holds in the case of the currents associated to graphs. Indeed, suppose to know that $\partial {\bf G}_{u} = {\bf G}_{u|_{\partial \Sigma}}$, and let ${\bf p} \colon \R^{d} \times \R^{n} \to \R^{n}$ be the orthogonal projection onto the second components. Then, it is immediate to see that
\[
{\bf p}_{\sharp} {\bf G}_{u} = {\bf p}_{\sharp} {\bf T}_{{\rm Id} \times u} = {\bf T}_{{\bf p} \circ ({\rm Id} \times u)} = {\bf T}_{u},
\]
where, for given Lipschitz $F \colon \R^{d} \to \A_{Q}(\R^{n})$ and $\phi \colon \R^{n} \to \R^{k}$, we have used the notation $\phi \circ F$ for the $Q$-valued function $\phi \circ F(p) := \sum_{l=1}^{Q} \llbracket \phi(F_{l}(p)) \rrbracket \in \A_{Q}(\R^{k})$. Then, using that push-forward and boundary do commute in the case of single valued Lipschitz functions, one readily concludes
\[
\partial {\bf T}_{u} = \partial {\bf p}_{\sharp} {\bf G}_{u} = {\bf p}_{\sharp} \partial {\bf G}_{u} = {\bf p}_{\sharp} {\bf G}_{u|_{\partial \Sigma}} = {\bf T}_{u|_{\partial \Sigma}}.
\]

Hence, we show that $\partial {\bf G}_{u} = {\bf G}_{u|_{\partial \Sigma}}$. The proof is by induction on both $m$ and $Q$. If $Q = 1$, the result is classical. On the other hand, the case $m = 1$ is a consequence of \cite[Proposition 1.2]{DLS11a}: if $u \colon \left[ 0,1 \right] \to \A_{Q}(\R^{n})$ is Lipschitz, then there exist Lipschitz functions $u_{1},\dots,u_{Q} \colon \left[ 0,1 \right] \to \R^{n}$ such that $u = \sum_{l=1}^{Q} \llbracket u_{l} \rrbracket$. Therefore, ${\bf T}_{u} = \sum_{l} (u_{l})_{\sharp} \llbracket (0,1) \rrbracket$, and thus
\[
\partial {\bf T}_{u} = \sum_{l} \partial (u_{l})_{\sharp} \llbracket (0,1) \rrbracket = \sum_{l} (u_{l})_{\sharp} \left( \llbracket 1 \rrbracket - \llbracket 0 \rrbracket \right) = \sum_{l} \left( \llbracket u_{l}(1) \rrbracket - \llbracket u_{l}(0) \rrbracket \right) = {\bf T}_{u|_{\partial \Sigma}}.  
\]

Then, we make the following inductive hypotheses:
\begin{itemize}
\item[$(H1)$] the theorem is true when $\dim(\Sigma) \leq m-1$,

\item[$(H2)$] the theorem is true for $\dim(\Sigma) = m$ when the function $u$ takes $Q^{*}$ values for every $Q^{*} < Q$,
\end{itemize}
and we show that the theorem is true for $\left( m,Q \right)$. In order to do this, we consider a dyadic decomposition of $\Sigma = \left[ 0,1 \right]^{m}$ in $m$-cubes of side length $2^{-h}$ with $h \in \N$, and for any integer vector $k \in \{ 0,1,\dots,2^{h}-1 \}^{m}$ we let $C_{h,k}$ be the cube $C_{h,k} := 2^{-h} \left( k + \left[ 0,1 \right]^{m} \right)$.

Now, for fixed $h$, let $\mathscr{B}_{h}$ be the set of all $k \in \{ 0,1,\dots,2^{h}-1 \}^{m}$ such that on the corresponding cube $C_{h,k}$ one has
\begin{equation} \label{pf_bdry:1}
\max_{p \in C_{h,k}} {\rm diam}(u(p)) > 3(Q-1)\Lip(u)2^{-h}\sqrt{m}.
\end{equation}  
By \cite[Proposition 1.6]{DLS11a}, if $k \in \mathscr{B}_{h}$ then on the cube $C_{h,k}$ the function $u$ is well separated into the sum
\begin{equation}
u|_{C_{h,k}} = \llbracket u_{k,Q_1} \rrbracket + \llbracket u_{k,Q_2} \rrbracket,
\end{equation}
where $u_{k,Q_1} \in \Lip(C_{h,k}, \A_{Q_1}(\R^n))$, $u_{k, Q_2} \in \Lip(C_{h,k}, \A_{Q_2}(\R^{n}))$ and $Q_{1}, Q_{2} < Q$. Therefore, by the inductive hypothesis $(H2)$ we can conclude that
\begin{equation} \label{pf_bdry:2}
\partial {\bf G}_{u|_{C_{h,k}}} = {\bf G}_{u|_{\partial C_{h,k}}}
\end{equation}
for every $k \in \mathscr{B}_{h}$.

If on the other hand $k \notin \mathscr{B}_{h}$, consider the affine homotopy $\sigma \colon \left[ 0,1 \right] \times C_{h,k} \times \R^{n} \to \R^{d} \times \R^{n}$ defined by
\begin{equation}
\sigma(t,p,v) := \left( p, (1-t)\bfeta \circ u(p) + t v \right), 
\end{equation}
and define the current
\begin{equation}
R_{k} := Q {\bf G}_{(\bfeta \circ u)|_{C_{h,k}}} + \sigma_{\sharp}(\llbracket (0,1) \rrbracket \times {\bf G}_{u|_{\partial C_{h,k}}}).
\end{equation}
Here, $\bfeta \circ u$ denotes the (single valued) Lipschitz function $\bfeta \circ u \colon \Sigma \to \R^{n}$ given by
\[
\bfeta \circ u(p) := \bfeta(u(p)) = \frac{1}{Q} \sum_{l=1}^{Q} u_{l}(p).
\]
Since $\bfeta \circ u$ is a classical Lipschitz function, the classical commutation rule of push-forward and boundary gives 
\begin{equation} \label{pf_bdry:3}
\partial (Q {\bf G}_{(\bfeta \circ u)|_{C_{h,k}}}) = Q {\bf G}_{(\bfeta \circ u)|_{\partial C_{h,k}}}.
\end{equation}
On the other hand, the homotopy formula \cite[(26.22)]{Sim83} yields
\begin{equation} \label{pf_bdry:4}
\partial \sigma_{\sharp}(\llbracket (0,1) \rrbracket \times {\bf G}_{u|_{\partial C_{h,k}}}) = {\bf G}_{u|_{\partial C_{h,k}}} - Q {\bf G}_{(\bfeta \circ u)|_{\partial C_{h,k}}} - \sigma_{\sharp}(\llbracket (0,1) \rrbracket \times \partial {\bf G}_{u|_{\partial C_{h,k}}}).
\end{equation}
Since $\partial C_{h,k}$ is the union of $(m-1)$-dimensional cubes, the inductive hypothesis $(H1)$ ensures that in fact $\partial {\bf G}_{u|_{\partial C_{h,k}}} = 0$, and thus the last addendum in the r.h.s. of equation \eqref{pf_bdry:4} vanishes. Combining \eqref{pf_bdry:3} and \eqref{pf_bdry:4} therefore yields
\begin{equation} \label{pf_bdry:5}
\partial R_{k} = {\bf G}_{u|_{\partial C_{h,k}}}. 
\end{equation}

For every $h \in \N$, define the current
\begin{equation} \label{pf_bdry:6}
T_{h} := \sum_{k \in \mathscr{B}_{h}} {\bf G}_{u|_{C_{h,k}}} + \sum_{k \notin \mathscr{B}_{h}} R_{k}, 
\end{equation}
and notice that by \eqref{pf_bdry:2} and \eqref{pf_bdry:5} one has
\begin{equation} \label{pf_bdry:7}
\partial T_{h} = \sum_{k} {\bf G}_{u|_{\partial C_{h,k}}} = {\bf G}_{u|_{\partial \Sigma}}
\end{equation}
because the common faces to adjacent cubes have opposite orientations. Furthermore, it is easy to see that for every $h \in \N$ and for every $k \in \mathscr{B}_{h}$ one has
\begin{equation} \label{pf_bdry:8}
\M({\bf G}_{u|_{C_{h,k}}}) \leq C (1 + \Lip(u))^{m} \Ha^{m}(C_{h,k}) \leq C (2^{-h})^{m}, 
\end{equation}
whereas, using \cite[(26.23)]{Sim83} to estimate $\M(\sigma_{\sharp}(\llbracket (0,1) \rrbracket \times {\bf G}_{u|_{\partial C_{h,k}}}))$, we get
\begin{equation} \label{pf_bdry:9}
\begin{split}
\M(R_{k}) &\leq C (2^{-h})^{m} + C \M({\bf G}_{u|_{\partial C_{h,k}}}) \sup_{( p,v ) \in {\rm Gr}(u|_{\partial C_{h,k}})} |(p,v) - (p, \bfeta \circ u(p))| \\
&\leq C (2^{-h})^{m} + C (2^{-h})^{m-1} \sup_{p \in \partial C_{h,k}} \max_{l \in \{1,\dots,Q\}} |u_{l}(p) - \bfeta \circ u(p)| \\
&\leq C (2^{-h})^{m} + C (2^{-h})^{m-1} \sup_{p \in \partial C_{h,k}} {\rm diam}(u(p)) \\
&\leq C (2^{-h})^{m}
\end{split} 
\end{equation}
if $k \notin \mathscr{B}_{h}$, for a constant $C = C(m,Q, \Lip(u))$.

By equations \eqref{pf_bdry:6}, \eqref{pf_bdry:7}, \eqref{pf_bdry:8} and \eqref{pf_bdry:9} we immediately conclude that
\begin{equation} \label{pf_bdry:10}
\M(T_{h}) + \M(\partial T_{h}) \leq C,
\end{equation}
where $C = C(m,Q,\Lip(u))$ is a constant independent of $h$. It then follows from the compactness theorem for integral currents \cite[Theorem 27.3]{Sim83} that when $h \uparrow \infty$ a subsequence of the $T_{h}$'s converges to an integral current $T$ such that $\partial T = {\bf G}_{u|_{\partial \Sigma}}$.

We are only left to prove that in fact $T = {\bf G}_{u}$. Since clearly $\spt(T) \subset {\rm Gr}(u)$ and $T$ is integral, we have that $T = \llbracket {\rm Gr}(u), \vec{\eta}, \Theta_{T} \rrbracket$ and ${\bf G}_{u} = \llbracket {\rm Gr}(u), \vec{\eta}, \Theta_{{\bf G}_{u}} \rrbracket$. We only need to show that $\Theta_{T}(p,v) = \Theta_{{\bf G}_{u}}(p,v)$ at $\Ha^{m}$-a.e. $(p,v) \in {\rm Gr}(u)$. Let $p \in \Sigma$, and denote by ${\rm D}_{Q}(u)$ the closed set
\[
{\rm D}_{Q}(u) := \lbrace p \in \Sigma \, \colon \, u(p) = Q \llbracket v \rrbracket \mbox{ for some } v \in \R^{n} \rbrace 
\]
of multiplicity $Q$ points of the function $u$. If $p \notin {\rm D}_{Q}(u)$, then there exists a suitably large $\bar{h}$ such that for every $h \geq \bar{h}$ one has $p \in C_{h,k}$ for some $k \in \mathscr{B}_{h}$, and thus it follows naturally that $\Theta_{T}(p, u_{l}(p)) = \Theta_{{\bf G}_{u}}(p, u_{l}(p))$ for every $l$. Hence, if $\Ha^{m}({\rm D}_{Q}(u)) = 0$ then we are done. Otherwise, consider the $1$-Lipschitz orthogonal projection on the first components $\bar{\p} \colon \R^{d} \times \R^{n} \to \R^{d}$. One has that $\bar{\p}_{\sharp}T = \bar{\Theta}_{T} \llbracket \Sigma \rrbracket$ and $\bar{\p}_{\sharp}{\bf G}_{u} = \bar{\Theta}_{{\bf G}_{u}} \llbracket \Sigma \rrbracket$, with
\[
\bar{\Theta}_{T}(x) = \sum_{(x,v) \in {\rm Gr}(u)} \Theta_{T}(x,v) \hspace{0.2cm} \mbox{ and } \hspace{0.2cm} \bar{\Theta}_{{\bf G}_{u}}(x) = \sum_{(x,v) \in {\rm Gr}(u)} \Theta_{{\bf G}_{u}}(x,v) \hspace{0.2cm} { for } \, \Ha^{m}\mbox{-a.e. } x \in \Sigma.
\]
In particular, for $\Ha^{m}$-a.e. $p \in {\rm D}_{Q}(u)$, if $u(p) = Q \llbracket v(p) \rrbracket$ then $\bar{\Theta}_{T}(p) = \Theta_{T}(p, v(p))$ and $\bar{\Theta}_{{\bf G}_{u}}(p) = \Theta_{{\bf G}_{u}}(p,v(p))$. On the other hand, by the definitions of $u$ and $T_{h}$ it also holds $\bar{\p}_{\sharp} {\bf G}_{u} = Q \llbracket \Sigma \rrbracket = \bar{\p}_{\sharp} T_{h}$ for every $h$. Since $T$ is the limit of (a subsequence of) the $T_{h}$, then necessarily $\bar{\Theta}_{{\bf G}_{u}}(p) = Q = \bar{\Theta}_{T}(p)$ $\Ha^{m}$-a.e. on $\Sigma$, and thus finally $\Theta_{{\bf G}_{u}}(p, v(p)) = Q = \Theta_{T}(p, v(p))$ for $\Ha^{m}$-a.e. $p \in {\rm D}_{Q}(u)$. This completes the proof.
\end{proof}

\section{Multi-valued push-forward of flat chains}\label{ssec:pf_fc} 

The goal of this Appendix is to extend the definition of multiple valued push-forward to the class of integral flat chains. As already mentioned, the existence of a multi-valued push-forward operator acting on flat chains has already been investigated by Almgren in \cite[Section 1.6]{Almgren00}. In what follows, we deduce it as a rather immediate consequence of Theorem \ref{pf_bdry:thm} and of the polyhedral approximation of flat chains \cite[Theorem 4.2.22]{Federer69}.

The first observation is that using similar arguments to those carried in Remark \ref{ext:lip_man}, it is not difficult to extend the results of \S\, \ref{sec:pf} to multi-valued push-forwards of general integer rectifiable currents. This was already observed by De Lellis and Spadaro in \cite{DLS13a}, without going further into the details. Indeed, assume that $\Omega \subset \R^{d}$ is open and if $T \in \Rc_{m}(\Omega)$. Then, there exist a sequence of $C^{1}$ oriented $m$-dimensional submanifolds $\Sigma_{j} \subset \R^{d}$, a sequence of pairwise disjoint closed subsets $K_{j} \subset \Sigma_{j}$, and a sequence of positive integers $k_{j}$ such that $\sum_{j=1}^{\infty} k_{j} \Ha^{m}(K_{j}) < \infty$ and
\begin{equation} \label{ir_repr}
T = \sum_{j=1}^{\infty} k_{j} \llbracket K_{j} \rrbracket.
\end{equation}
Now, if $u \colon \Omega \to \A_{Q}(\R^{n})$ is Lipschitz and proper, we define the push-forward of $T$ through $u$ by setting 
\begin{equation} \label{Q_pf_ir:repr}
u_{\sharp} T := \sum_{j=1}^{\infty} k_{j} {\bf T}_{u_j},
\end{equation}
where $u_{j} := u|_{K_{j}}$. We record the properties of $u_{\sharp} T$ in the following proposition.

\begin{proposition}[$Q$-valued push-forward of rectifiable currents.]
The integer rectifiable current $u_{\sharp} T \in \Rc_{m}(\R^{n})$ defined in \eqref{Q_pf_ir:repr} is independent of the particular representation \eqref{ir_repr} of $T$. If $T = \llbracket B, \vec{\tau}, \theta \rrbracket$, then $u_{\sharp} T$ acts on forms $\omega \in \D^{m}(\R^{n})$ as follows:
\begin{equation} \label{Q_pf_ir:eq}
(u_{\sharp} T)(\omega) = \int_{B} \sum_{l=1}^{Q} \langle \omega(u_{l}(p)), Du_{l}(p)_{\sharp} \vec{\tau}(p) \rangle \, \theta(p) \, d\Ha^{m}(p).
\end{equation}
Moreover, $u_{\sharp} T$ can be represented by $u_{\sharp} T = \llbracket {\rm Im}(u|_{B}), \vec{\eta}, \Theta \rrbracket$, where
\begin{itemize}
\item[$(R1)'$] ${\rm Im}(u|_{B}) = \bigcup_{j=1}^{\infty} {\rm Im}(u_{j})$ is an $m$-rectifiable set in $\R^{n}$;

\item[$(R2)'$] $\vec{\eta}$ is a Borel unit $m$-vector field orienting ${\rm Im}(u|_{B})$; moreover, if $K_{j} = \bigcup_{i \in \N} K^{i}_{j}$ is a countable partition of $K_{j} \subset \Sigma_{j}$ in measurable subsets associated to a Lipschitz selection $u|_{K^{i}_{j}} = \sum_{l} \llbracket (u^{i}_{j})^{l} \rrbracket$ of $u$ as in Proposition \ref{Lip_sel}, then for $\Ha^{m}$-a.e. $y \in {\rm Im}(u|_{B})$ one has that
\begin{equation}
\frac{D(u^{i}_{j})^{l}(p)_{\sharp} \vec{\tau}(p)}{|D(u^{i}_{j})^{l}(p)_{\sharp} \vec{\tau}(p)|} = \pm \vec{\eta}(y)
\end{equation}
for all $j, i, l,p$ such that $(u^{i}_{j})^{l}(p) = y$;

\item[$(R3)'$] for $\Ha^{m}$-a.e. $y \in {\rm Im}(u|_{B})$, the (Borel) multiplicity function $\Theta$ equals
\begin{equation} 
\Theta(y) = \sum_{j,i,l,p \, \colon \, (u^{i}_{j})^{l}(p) = y} \theta(p) \left\langle \vec{\eta}(y), \frac{D(u^{i}_{j})^{l}(p)_{\sharp} \vec{\tau}(p)}{|D(u^{i}_{j})^{l}(p)_{\sharp} \vec{\tau}(p)|} \right\rangle.
\end{equation}
\end{itemize}
\end{proposition}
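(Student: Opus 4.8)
The plan is to bootstrap from Proposition \ref{Q_pf:repr} and Remark \ref{ext:lip_man}. Each $K_{j}$ is a closed subset of the $C^{1}$ oriented submanifold $\Sigma_{j}$, so $u_{j} := u|_{K_{j}}$ is a proper Lipschitz $Q$-valued map and ${\bf T}_{u_{j}}$ is an integer rectifiable $m$-current satisfying all the conclusions of Proposition \ref{Q_pf:repr}; in particular, for any measurable selection $u = \sum_{l}\llbracket u_{l}\rrbracket$ one has ${\bf T}_{u_{j}}(\omega) = \int_{K_{j}} \sum_{l} \langle \omega(u_{l}(p)), Du_{l}(p)_{\sharp}\vec{\tau}(p)\rangle\, d\Ha^{m}(p)$. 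Since $\theta \equiv k_{j}$ on $K_{j}$ in the decomposition \eqref{ir_repr}, formula \eqref{Q_pf_ir:eq} will follow by summing these identities against the weights $k_{j}$, \emph{provided} the series $\sum_{j} k_{j}{\bf T}_{u_{j}}$ makes sense. What remains is thus: (a) convergence of the series; (b) the rectifiable representation $(R1)'$--$(R3)'$; (c) independence of \eqref{ir_repr}.

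For (a) I would estimate, via Proposition \ref{Lip_sel} and the area formula, $\M({\bf T}_{u_{j}}) \le \sum_{i,l}\int_{K_{j}^{i}} {\bf J}(u_{j}^{i})^{l}\,d\Ha^{m} \le Q\,\Lip(u)^{m}\Ha^{m}(K_{j})$, using $|Du_{l}(p)_{\sharp}\vec{\tau}(p)| = {\bf J}u_{l}(p) \le \Lip(u)^{m}$. As $\sum_{j} k_{j}\Ha^{m}(K_{j}) < \infty$, the series $\sum_{j} k_{j}{\bf T}_{u_{j}}$ converges absolutely in the mass norm; hence $u_{\sharp}T$ is a well-defined current with $\M(u_{\sharp}T) \le Q\,\Lip(u)^{m}\sum_{j} k_{j}\Ha^{m}(K_{j}) < \infty$, and \eqref{Q_pf_ir:eq} follows by passing to the limit in the partial sums.

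For (b) put $\mathrm{Im}(u|_{B}) := \bigcup_{j}\mathrm{Im}(u_{j})$. Each $\mathrm{Im}(u_{j})$ is $m$-rectifiable by Proposition \ref{Q_pf:repr}$(R1)$, with $\Ha^{m}(\mathrm{Im}(u_{j})) \le Q\,\Lip(u)^{m}\Ha^{m}(K_{j})$ by the area formula, so $\mathrm{Im}(u|_{B})$ is $m$-rectifiable of finite $\Ha^{m}$-measure, which is $(R1)'$. At $\Ha^{m}$-a.e.\ $y \in \mathrm{Im}(u|_{B})$ the approximate tangent planes of all the (countably many) $C^{1}$ pieces $(u_{j}^{i})^{l}(K_{j}^{i})$ through $y$ agree, so the unit simple $m$-vector $\pm\frac{D(u_{j}^{i})^{l}(p)_{\sharp}\vec{\tau}(p)}{|D(u_{j}^{i})^{l}(p)_{\sharp}\vec{\tau}(p)|}$ does not depend on the index $(j,i,l,p)$ with $(u_{j}^{i})^{l}(p) = y$; fixing a global Borel choice of sign yields $\vec{\eta}$ and establishes $(R2)'$. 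With this $\vec{\eta}$, define $\Theta$ by the formula in $(R3)'$; the estimate $|\Theta(y)| \le \sum_{j} k_{j}\sum_{(u_{j}^{i})^{l}(p) = y}|\theta(p)|$ and the area formula give $\int|\Theta|\,d\Ha^{m} \le \sum_{j} k_{j}\M({\bf T}_{u_{j}}) < \infty$, so $\Theta \in L^{1}$ and $\llbracket \mathrm{Im}(u|_{B}), \vec{\eta}, \Theta\rrbracket$ is a well-defined integer rectifiable current; applying Proposition \ref{Q_pf:repr}$(R2)$--$(R3)$ term by term and summing against $k_{j}$ identifies it with $\sum_{j} k_{j}{\bf T}_{u_{j}} = u_{\sharp}T$. (Alternatively, $u_{\sharp}T$ is an $\M$-limit of integer rectifiable currents, hence integer rectifiable, and $(R1)'$--$(R3)'$ can then be read off directly from Proposition \ref{Q_pf:repr}.)

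Finally, for (c), if $T = \sum_{j} k_{j}\llbracket K_{j}\rrbracket = \sum_{j'} k'_{j'}\llbracket K'_{j'}\rrbracket$ are two decompositions as in \eqref{ir_repr}, then writing $T = \llbracket B, \vec{\tau}, \theta\rrbracket$, disjointness of the $K_{j}$ (resp.\ of the $K'_{j'}$) forces $B = \bigcup_{j} K_{j} = \bigcup_{j'} K'_{j'}$ up to $\Ha^{m}$-null sets and $\theta(p)\vec{\tau}(p) = k_{j}\,\vec{\tau}_{\Sigma_{j}}(p)$ for $\Ha^{m}$-a.e.\ $p \in K_{j}$ (and the analogue for the primed data), where $\theta\vec{\tau}$ is the combination intrinsically determined by $T$. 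By (b) both series act on $\omega$ through the integral in \eqref{Q_pf_ir:eq}, whose integrand depends on $p$ only via $(u_{l}(p))_{l}$ and the product $\theta(p)\vec{\tau}(p)$ — because $Du_{l}(p)_{\sharp}$ is multilinear, so $Du_{l}(p)_{\sharp}\vec{\tau}(p)\,\theta(p) = Du_{l}(p)_{\sharp}(\theta(p)\vec{\tau}(p))$ — and which, by Proposition \ref{Q_pf:repr}, is insensitive to the chosen measurable selection $u = \sum_{l}\llbracket u_{l}\rrbracket$; hence the two currents coincide. The step I expect to demand the most care is (b), namely the $\Ha^{m}$-a.e.\ consistency of approximate tangent planes across the infinitely many $C^{1}$ pieces producing a well-defined $\vec{\eta}$, together with the $L^{1}$ bound on $\Theta$; everything else is bookkeeping with the area formula and linearity on top of Proposition \ref{Q_pf:repr}.
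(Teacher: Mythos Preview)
Your argument is correct and is exactly the natural extension the paper has in mind: the proposition is stated there without a detailed proof, the text only pointing to ``similar arguments to those carried in Remark~\ref{ext:lip_man}'' as the template, and your steps (a)--(c) are precisely that bookkeeping on top of Proposition~\ref{Q_pf:repr}. One small imprecision: in (b) the bound $\int |\Theta|\,d\Ha^{m} \le \sum_{j} k_{j}\,\M({\bf T}_{u_{j}})$ is not literally right, since $\M({\bf T}_{u_{j}})$ already involves the \emph{signed} multiplicity and may undercount; what you want is $\int |\Theta|\,d\Ha^{m} \le \sum_{j} k_{j} \sum_{i,l} \int_{K_{j}^{i}} {\bf J}(u_{j}^{i})^{l}\,d\Ha^{m} \le Q\,\Lip(u)^{m}\sum_{j} k_{j}\,\Ha^{m}(K_{j})$, which is the same estimate you already used in (a) and gives the same conclusion.
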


\begin{notazioni} \label{notazioni}
In the rest of this Appendix, we will use the symbol $u_{\sharp} T$ to denote the push-forward of a current $T \in \D_{m}(\Omega)$ through a multiple valued function $u \colon \Omega \to \A_{Q}(\R^{n})$ whenever such a push-forward is defined. The symbol ${\bf T}_{u}$ may be still used when it is understood that the push-forward operator is acting on the whole domain of $u$. In particular, if $\Sigma \subset \R^{d}$ is an $m$-dimensional Lipschitz submanifold and $u \colon \Sigma \to \A_{Q}(\R^{n})$ then the writings ${\bf T}_{u}$ and $u_{\sharp} \llbracket \Sigma \rrbracket$ are equivalent.
\end{notazioni}

We are now in the position to define $u_{\sharp}T$ when $T \in \F_{m}(\Omega)$. Let us fix the following hypotheses.

\begin{ipotesi} \label{hyp}
We will consider:
\begin{itemize}
\item a Lipschitz $Q$-valued function $u \colon \Omega \to \A_{Q}(\R^{n})$ defined in an open subset $\Omega \subset \R^{d}$;

\item a compact subset $K \subset \Omega$;

\item an integral flat $m$-chain $T \in \F_{m}(\R^{d})$ with $\spt(T) \subset {\rm int}K$.
\end{itemize}
\end{ipotesi}
 
Given $K$ and $T$ as in Assumptions \ref{hyp}, \cite[Theorem 4.2.22]{Federer69} there exists a sequence $\{ P_{j} \}_{j=1}^{\infty}$ of integral polyhedral $m$-chains supported in $K$ such that
\begin{equation} \label{pf_fc:1}
\Fl_{K}(T - P_{j}) \leq \frac{1}{j} \hspace{0.3cm} \mbox{ and } \hspace{0.3cm} \M(P_{j}) \leq \M(T) + \frac{1}{j}.
\end{equation} 

Now, integral polyhedral chains are a subclass of the class of integer rectifiable currents, as any $P_{j}$ can be written as the linear combination $P_{j} = \sum_{i=1}^{k_j} \beta_{ji} \llbracket \sigma_{ji} \rrbracket$ of a finite number of oriented simplexes $\llbracket \sigma_{ji} \rrbracket$ with coefficients $\beta_{ji} \in \Z$. Since we have a well defined notion of multi-valued push-forward of an integer rectifiable current, we can consider the currents
\begin{equation} \label{pf_fc:2}
u_{\sharp} P_{j} = \sum_{i=1}^{k_j} \beta_{ji} \, u_{\sharp} \llbracket \sigma_{ji} \rrbracket.
\end{equation}

We also know that the mass of $u_{\sharp}P_{j}$ can be estimated by
\begin{equation} \label{pf_fc:3}
\M(u_{\sharp}P_{j}) \leq C \M(P_{j}),
\end{equation}
where $C$ is a constant depending on $\Lip(u)$, and Theorem \ref{pf_bdry:thm} guarantees that
\begin{equation} \label{pf_fc:4}
\partial (u_{\sharp} P_{j}) = u_{\sharp} (\partial P_{j}).
\end{equation}

Clearly, $\{ P_{j} \}$ is a Cauchy sequence with respect to the flat distance $\Fl_{K}$. Indeed, for any $j,h \in \N$ one can explicitly estimate
\begin{equation} \label{pf_fc:4bis}
\Fl_{K}(P_{j} - P_{h}) \leq \Fl_{K}(P_{j} - T) + \Fl_{K}(T - P_{h}) \leq \frac{1}{j} + \frac{1}{h}.
\end{equation}
Now, we have the following
\begin{theorem}[Push-forward of a flat chain] \label{pf_fc:thm}
Let $u$, $K$ and $T$ be as in Assumptions \ref{hyp}. Then, for any open subset $W \Subset \Omega$ with $K \subset W$, for any compact $K' \subset \R^{n}$ containing ${\rm Im}(u|_{W}) = \bigcup_{p \in W} \spt(u(p))$, and for any sequence $\lbrace P_{j} \rbrace_{j=1}^{\infty}$ of integral polyhedral $m$-chains converging to $T$ with respect to $d_{\Fl_{K}}$, the sequence $\{ u_{\sharp}P_{j} \}_{j=1}^{\infty}$ is Cauchy with respect to $d_{\Fl_{K'}}$. Therefore, there exists an integral flat $m$-chain $Z \in \F_{m,K'}(\R^{n})$ such that $\Fl_{K'}(Z - u_{\sharp}P_{j}) \to 0$ as $j \uparrow \infty$. Such a $Z$ does not depend on the approximating sequence $P_{j}$ converging to $T$.
\end{theorem}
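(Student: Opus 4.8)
The plan is to establish the Cauchy property for $\{u_\sharp P_j\}$ by controlling the flat norm $\Fl_{K'}(u_\sharp P_j - u_\sharp P_h)$ in terms of $\Fl_K(P_j - P_h)$, and then to pass to the limit. First I would record the key observation that the multi-valued push-forward is \emph{linear} on integer rectifiable currents and, by Theorem \ref{pf_bdry:thm}, commutes with the boundary operator; hence $u_\sharp(P_j - P_h) = u_\sharp P_j - u_\sharp P_h$ and $\partial(u_\sharp(P_j - P_h)) = u_\sharp(\partial P_j - \partial P_h)$. Next, given $j, h$, I would invoke Proposition \ref{inf_min} (the infimum in the flat norm is attained): write $P_j - P_h = R_{jh} + \partial S_{jh}$ with $R_{jh} \in \Rc_{m,K}(\R^d)$, $S_{jh} \in \Rc_{m+1,K}(\R^d)$ and $\M(R_{jh}) + \M(S_{jh}) = \Fl_K(P_j - P_h) \le \tfrac1j + \tfrac1h$ by \eqref{pf_fc:4bis}. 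Applying $u_\sharp$ to this decomposition and using that push-forward and boundary commute gives
\[
u_\sharp P_j - u_\sharp P_h = u_\sharp R_{jh} + u_\sharp(\partial S_{jh}) = u_\sharp R_{jh} + \partial(u_\sharp S_{jh}),
\]
and since $\spt(u_\sharp R_{jh}), \spt(u_\sharp S_{jh}) \subset {\rm Im}(u|_W) \subset K'$, this exhibits $u_\sharp P_j - u_\sharp P_h$ as an element of $\F_{m,K'}(\R^n)$ of the required form. The mass estimate \eqref{pf_fc:3} (more precisely its analogue $\M(u_\sharp R) \le C\,\M(R)$ for rectifiable currents, with $C = C(\Lip(u), Q, m)$) then yields
\[
\Fl_{K'}(u_\sharp P_j - u_\sharp P_h) \le \M(u_\sharp R_{jh}) + \M(u_\sharp S_{jh}) \le C\big(\M(R_{jh}) + \M(S_{jh})\big) \le C\Big(\tfrac1j + \tfrac1h\Big),
\]
so $\{u_\sharp P_j\}$ is Cauchy in the complete metric space $(\F_{m,K'}(\R^n), d_{\Fl_{K'}})$ and therefore converges to some $Z \in \F_{m,K'}(\R^n)$.

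For independence of the approximating sequence, I would take two sequences $\{P_j\}$ and $\{P_j'\}$ both converging to $T$ in $d_{\Fl_K}$, form the interlaced sequence $P_1, P_1', P_2, P_2', \dots$, which still converges to $T$, and apply the argument just given to conclude that $\{u_\sharp P_j\}$ and $\{u_\sharp P_j'\}$ have the same limit; equivalently, estimate $\Fl_{K'}(u_\sharp P_j - u_\sharp P_j')$ directly by $C\,\Fl_K(P_j - P_j') \to 0$ via the same decomposition. Finally, the limit $Z$ is an \emph{integral} flat chain because each $u_\sharp P_j$ is integer rectifiable with integer rectifiable boundary (by Proposition \ref{Q_pf:repr} and Theorem \ref{pf_bdry:thm}) and $\F_{m,K'}$ consists precisely of such sums $R + \partial S$ with $R, S$ integer rectifiable.

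The main obstacle I anticipate is not conceptual but bookkeeping: one must ensure the constant $C$ in $\M(u_\sharp R) \le C\,\M(R)$ is genuinely uniform — depending only on $\Lip(u)$, $Q$ and $m$, not on the particular rectifiable current $R$ or its support — which follows from the pointwise bound $|Du_l(p)_\sharp \vec\tau(p)| \le (1 + \Lip(u))^m$ applied term-by-term in the representation \eqref{Q_pf_ir:eq} and summed over the $Q$ sheets, exactly as in \eqref{pf_bdry:8}. One should also be slightly careful that the compact sets are compatible: $S_{jh}$ is supported in $K \subset W$, so $u_\sharp S_{jh}$ is supported in ${\rm Im}(u|_W) \subset K'$, and similarly for $R_{jh}$; this is why the statement quantifies over $W$ and $K'$ with $K \subset W$ and ${\rm Im}(u|_W) \subset K'$. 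With these points in place the proof is a routine completeness argument.
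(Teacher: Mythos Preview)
Your argument has a genuine gap at the step where you write
\[
u_\sharp(\partial S_{jh}) = \partial(u_\sharp S_{jh}).
\]
Theorem \ref{pf_bdry:thm} establishes the commutation of the $Q$-valued push-forward with the boundary operator only for Lipschitz manifolds with Lipschitz boundary (and hence, by linearity, for integral polyhedral chains). It does \emph{not} give the identity $\partial(u_\sharp S) = u_\sharp(\partial S)$ for a general integer rectifiable current $S$, even when $\partial S$ happens to be rectifiable. Your $S_{jh}$, coming from Proposition \ref{inf_min}, is merely in $\Rc_{m+1,K}(\R^d)$, so the commutation you invoke is not available at this point of the paper; in fact, that commutation for flat chains is precisely what is being built up here and is only recorded afterwards as a corollary.

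This is exactly why the paper takes the detour through Lemma \ref{pf_fc:lem}: given the polyhedral chain $P_j - P_h$, that lemma produces a decomposition $P_j - P_h = R + \partial S$ with $R$ and $S$ \emph{polyhedral} (and supported in $W$), with $\M(R)+\M(S)$ controlled by $\Fl_K(P_j - P_h)$. On polyhedral $S$ the commutation $u_\sharp(\partial S) = \partial(u_\sharp S)$ is legitimate by Theorem \ref{pf_bdry:thm}, and the rest of your estimate goes through verbatim. So your overall strategy is the right one, but you must replace the appeal to Proposition \ref{inf_min} with an appeal to Lemma \ref{pf_fc:lem} (or prove an analogue of Theorem \ref{pf_bdry:thm} for integral currents first, which is considerably harder and essentially circular here).
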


\begin{definition}
The current $Z \in \F_{m}(\R^{n})$ given by Theorem \ref{pf_fc:thm} is the push-forward of $T$ through $u$. Coherently with Notation \ref{notazioni}, we will set $Z = u_{\sharp}T$.
\end{definition}

The proof of Theorem \ref{pf_fc:thm} is a simple consequence of the following lemma, the proof of which can be found in \cite[Lemma 4.2.23]{Federer69}. 
\begin{lemma} \label{pf_fc:lem}
If $K \subset W \subset \R^{d}$ with $K$ compact, $W$ open, and $P \in \mathscr{P}_{m}(\R^{d})$ with $\spt(P) \subset K$, then the quantity
\begin{equation} \label{pf_fc:lem1}
G(P) := \inf\left\lbrace \M(P - \partial S) + \M(S) \, \colon \, S \in \mathscr{P}_{m+1}(\R^{d}) \mbox{ with } \spt(S) \subset W \right\rbrace
\end{equation}
does not exceed $\Fl_{K}(P)$.
\end{lemma}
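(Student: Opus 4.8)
The plan is to show $G(P)\le \Fl_K(P)+\rho$ for an arbitrary $\rho>0$ and then let $\rho\downarrow 0$. By Proposition \ref{inf_min} I fix a decomposition $P=R+\partial S$ with $R\in\Rc_{m,K}$, $S\in\Rc_{m+1,K}$ and $\M(R)+\M(S)=\Fl_K(P)$; since $\partial R=\partial P$ and $\partial S=P-R$ have finite mass, $R$ and $S$ are in fact integral currents. The goal is then to replace $R$ and $S$ by polyhedral chains of almost the same mass, in such a way that the identity $P=(\text{polyhedral }m\text{-chain})+\partial(\text{polyhedral }(m+1)\text{-chain})$ holds \emph{exactly}. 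The obstruction one must overcome is that a plain polyhedral approximation of $S$ leaves a residual $m$-chain which, although small in flat norm, may carry the possibly enormous mass $\M(\partial S)=\M(P-R)$; this residual cannot be controlled by any naive iteration, so the approximations must be arranged to match boundaries on the nose.

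The tool that makes exact boundary matching possible is the following elementary consequence of the Deformation Theorem \ref{deformation:thm}, which I would prove first: a polyhedral $p$-cycle $C$ (i.e. $\partial C=0$) with $\Fl_B(C)\le\tau$ for a ball $B\supset\spt(C)$ is the boundary of a polyhedral $(p+1)$-chain $Q$, supported in a slightly larger ball, with $\M(Q)\le c(p,d,{\rm diam}\,B)\,\tau$. Indeed, by Proposition \ref{inf_min} write $C=R_C+\partial S_C$ with $\M(R_C)+\M(S_C)\le\tau$ and supports in $B$; then $R_C$ is a $p$-cycle of mass $\le\tau$, and applying the Deformation Theorem to it the error term $R'$ satisfies $\M(R')\le\gamma\varepsilon\M(\partial R_C)=0$, so $R_C=P'+\partial S'$ with $P'$ a polyhedral $p$-cycle (its boundary has mass $\le\gamma\M(\partial R_C)=0$) of mass $\le\gamma\tau$; coning $P'$ off a fixed point writes $P'=\partial V'$ with $V'$ polyhedral of mass $\le {\rm diam}(B')\,\gamma\tau$, whence $C=\partial(V'+S'+S_C)$, and one last application of the Deformation Theorem — invoking property $(iv)$, since this chain has the polyhedral boundary $C$ — converts $V'+S'+S_C$ into the desired polyhedral $Q$ with $\partial Q=C$ and controlled mass. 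The same argument without the final step shows that a rectifiable $p$-cycle of small flat norm bounds an integral $(p+1)$-chain of small mass. (For $p=0$ the statement is trivial once $\tau$ is smaller than the isoperimetric constant, because an integral $0$-cycle of mass $<1$ vanishes.)

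With this in hand I would proceed in two strokes. First, apply Proposition \ref{poly_int} to $R$ to get a polyhedral $R_1$ with $\M(R_1)\le\M(R)+\rho/3$ and $\Fl(R-R_1)$ as small as I wish; the boundary error $\partial P-\partial R_1=\partial(R-R_1)$ is a polyhedral $(m-1)$-cycle of small flat norm, hence equals $\partial Q_1$ for a polyhedral $m$-chain $Q_1$ of small mass by the auxiliary fact, and $\hat R:=R_1+Q_1$ satisfies $\partial\hat R=\partial P$ \emph{exactly}, $\M(\hat R)\le\M(R)+\rho/3$, and $\Fl(R-\hat R)$ small. Now $P-\hat R$ is a polyhedral $m$-cycle, and $R-\hat R$ is a rectifiable $m$-cycle of small flat norm, so it is $\partial Z$ for an integral $(m+1)$-chain $Z$ with $\M(Z)\le\rho/3$; therefore $P-\hat R=(R-\hat R)+\partial S=\partial(S+Z)$, with $S+Z$ integral, $\M(S+Z)\le\M(S)+\rho/3$, and polyhedral boundary $P-\hat R$. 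Second, apply Proposition \ref{poly_int} to $S+Z$ and correct its boundary by the auxiliary fact exactly as before, producing a polyhedral $\hat S$ with $\partial\hat S=P-\hat R$ and $\M(\hat S)\le\M(S)+2\rho/3$. Then $P=\hat R+\partial\hat S$, so $\M(P-\partial\hat S)+\M(\hat S)=\M(\hat R)+\M(\hat S)\le\M(R)+\M(S)+\rho=\Fl_K(P)+\rho$; keeping all the auxiliary perturbations inside a fixed finite chain of balls $K\subset B_1\subset\cdots\subset B_N\subset W$ (each use of Proposition \ref{poly_int}, of the Deformation Theorem, and of the cone enlarges supports only by a controllable amount) ensures $\spt(\hat S)\subset W$, so $\hat S$ is an admissible competitor for $G(P)$ and $G(P)\le\Fl_K(P)+\rho$.

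The step I expect to be the main obstacle is the auxiliary cycle-filling statement together with the resulting insistence on getting $\partial\hat R=\partial P$ and $\partial\hat S=P-\hat R$ \emph{exactly} rather than approximately: it is precisely the vanishing of the Deformation Theorem's $R$-error on cycles — because that error is bounded by the boundary mass, which is zero — that makes the fillings cheap, and hence makes the two-step scheme close up with no leftover mass to absorb.
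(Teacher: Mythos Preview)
Your proof is correct, but it follows a genuinely different route from the paper's. The paper proceeds in two stages: first it establishes a \emph{crude} estimate $G(P)\le\tilde\gamma\,\Fl_K(P)$ for some dimensional constant $\tilde\gamma$, by applying the Deformation Theorem twice (once to $P-\partial N$, whose boundary $\partial P$ is polyhedral, and once to the resulting $N+S_1$, whose boundary is again polyhedral), which directly yields a polyhedral decomposition $P=R_1+\partial R_2$ with controlled---but not sharp---masses. In the second stage it approximates $P-\partial N$ and $N$ separately by polyhedral chains $P_1,P_2$ via Proposition~\ref{poly_int}, and then uses the subadditivity $G(P)\le G(P_1+\partial P_2)+G(P-P_1-\partial P_2)$: the first term is at most $\M(P_1)+\M(P_2)\le\Fl_K(P)+2\rho$, and the second is controlled by the crude estimate applied to the small polyhedral remainder $P-P_1-\partial P_2$.

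Your scheme instead builds the competitor $\hat S$ in one pass, insisting at each step on matching boundaries \emph{exactly} by means of your auxiliary ``cheap filling of small-flat-norm cycles'' lemma (which is essentially the isoperimetric inequality for integral currents, plus a final polyhedralisation). This is more hands-on but equally valid, and it makes the construction of the actual polyhedral decomposition $P=\hat R+\partial\hat S$ completely explicit. The paper's bootstrap, by contrast, never needs to correct boundaries: it absorbs all the mismatch into the remainder $P-P_1-\partial P_2$ and kills it with the crude constant, which is arguably cleaner and shorter. Your approach has the advantage of isolating the isoperimetric step as a reusable lemma.
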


\begin{proof}[Proof of Theorem \ref{pf_fc:thm}.]
Fix any open set $W \Subset \Omega$ with $K \subset W$, let $K' \subset \R^{n}$ be any compact set containing ${\rm Im}(u|_{W})$, and let $\lbrace P_{j} \rbrace_{j=1}^{\infty}$ be any sequence of integral polyhedral $m$-chains supported in $K$ and satisfying \eqref{pf_fc:1}. For any $j, h \in \N$, consider the current $P_{j} - P_{h} \in \mathscr{P}_{m}(\R^{d})$, and notice that $\spt(P_{j} - P_{h}) \subset K$. For any choice of polyhedral currents $R \in \mathscr{P}_{m}(\R^{d})$, $S \in \mathscr{P}_{m+1}(\R^{d})$ with $\spt(R) \cup \spt(S) \subset W$ such that
\begin{equation} \label{pf_fc:5}
P_{j} - P_{h} = R + \partial S,
\end{equation}
Theorem \ref{pf_bdry:thm} guarantees that
\begin{equation} \label{pf_fc:6}
u_{\sharp}P_{j} - u_{\sharp}P_{h} = u_{\sharp}R + \partial(u_{\sharp}S).
\end{equation}
Since $u_{\sharp}R$ and $u_{\sharp}S$ are rectifiable currents supported in $K'$, one has
\begin{equation} \label{pf_fc:7}
\begin{split}
\Fl_{K'}(u_{\sharp}P_{j} - u_{\sharp}P_{h}) &\leq \M(u_{\sharp}R) + \M(u_{\sharp}S) \\
&\leq C \left( \M(R) + \M(S) \right),
\end{split}
\end{equation}
for some constant $C$ depending on $\Lip(u)$. Taking the infimum among all integral polyhedral currents $R$ and $S$ supported in $W$ such that \eqref{pf_fc:5} holds, we immediately conclude from Lemma \ref{pf_fc:lem} that
\begin{equation} \label{pf_fc:8}
\Fl_{K'}(u_{\sharp}P_{j} - u_{\sharp}P_{h}) \leq C G(P_{j} - P_{h}) \leq C \Fl_{K}(P_{j} - P_{h}) \leq \frac{C}{j} + \frac{C}{h}.
\end{equation}
This proves that the sequence $\lbrace u_{\sharp} P_{j} \rbrace_{j=1}^{\infty}$ is Cauchy with respect to $d_{\Fl_{K'}}$ and, thus, has a limit $Z \in \F_{m,K'}(\R^{n})$. In order to see that the limit does not depend on the approximating sequence $\lbrace P_{j} \rbrace$, consider two sequences of integral polyhedral $m$-currents $\lbrace P_{j} \rbrace$ and $\lbrace \tilde{P}_{j} \rbrace$ both approximating $T$ in the $\Fl_{K}$ distance, and assume that $u_{\sharp} P_{j}$ and $u_{\sharp} \tilde{P}_{j}$ flat converge to $Z$ and $\tilde{Z}$ respectively. For any $\varepsilon > 0$, let $j_{0} \in \N$ be such that both $\Fl_{K}(T - P_{j_0}) + \Fl_{K}(T - \tilde{P}_{j_0}) < \varepsilon$ and $\Fl_{K'}(Z - u_{\sharp}P_{j_0}) + \Fl_{K'}(\tilde{Z} - u_{\sharp} \tilde{P}_{j_0}) < \varepsilon$. Then, we can estimate:
\begin{equation} \label{pf_fc:9}
\begin{split}
\Fl_{K'}(Z - \tilde{Z}) &\leq \Fl_{K'}(Z - u_{\sharp}P_{j_0}) + \Fl_{K'}(u_{\sharp}P_{j_0} - u_{\sharp}\tilde{P}_{j_0}) + \Fl_{K'}(u_{\sharp}\tilde{P}_{j_0} - \tilde{Z}) \\
&\leq  \varepsilon + \Fl_{K'}(u_{\sharp}P_{j_0} - u_{\sharp}\tilde{P}_{j_0})
\end{split}
\end{equation}
On the other hand, applying the same argument that we have used above to prove \eqref{pf_fc:8} to $P_{j_0} - \tilde{P}_{j_0} \in \mathscr{P}_{m}(\R^{d})$ shows that
\begin{equation} \label{pf_fc:10}
\Fl_{K'}(u_{\sharp} P_{j_0} - u_{\sharp}\tilde{P}_{j_0}) \leq C \Fl_{K}(P_{j_0} - \tilde{P}_{j_0}) \leq C \varepsilon.
\end{equation}
Combining \eqref{pf_fc:9} and \eqref{pf_fc:10}, and letting $\varepsilon \downarrow 0$ yields that $Z = \tilde{Z}$.
\end{proof}

\begin{corollary}
Let $u$, $K$ and $T$ be as in Assumption \ref{hyp}. If $Z = u_{\sharp}T$, then $\partial Z = u_{\sharp}(\partial T)$.
\end{corollary}

\begin{proof}
Let $W \Subset \Omega$ and $K' \subset \R^{n}$ be as in Theorem \ref{pf_fc:thm}, and let $\lbrace P_{j} \rbrace_{j=1}^{\infty}$ be any sequence of integral polyhedral $m$-chains $\Fl_{K}$-converging to $T$. Then, by Theorem \ref{pf_fc:thm} $Z$ is the $\Fl_{K'}$-limit of the currents $u_{\sharp}P_{j}$. Hence, since in general $\Fl_{K}(\partial T) \leq \Fl_{K}(T)$, we also have that $\partial Z$ is the $\Fl_{K'}$-limit of the currents $\partial (u_{\sharp}P_{j}) = u_{\sharp}(\partial P_{j})$ by Theorem \ref{pf_bdry:thm}. On the other hand, since the $\partial P_{j}$'s are a sequence of integral polyhedral $(m-1)$-chains which $\Fl_{K}$-approximates $\partial T$, the sequence $u_{\sharp}(\partial P_{j})$ necessarily $\Fl_{K'}$-converges to $u_{\sharp}(\partial T)$. The claim follows by uniqueness of the limit.
\end{proof}

%\section{Push-forwards through multiple valued functions} \label{sec:pf}
%
%\subsection{The push-forward of rectifiable currents. Graphs.}
%
%
%
%
%
%
%
%\subsection{Push-forward and boundary.} 

\bibliographystyle{aomalpha}
\bibliography{References}

\Addresses

\end{document}